\documentclass[11pt,twoside]{article}

\usepackage{amsmath}
\usepackage{amssymb}
\usepackage{amsthm}

\allowdisplaybreaks

\pagestyle{myheadings}
\markboth{Wen Yuan, Yoshihiro Sawano and Dachun Yang}
{Decompositions of Besov-Hausdorff and
Triebel-Lizorkin-Hausdorff Spaces}

\textwidth=15cm
\textheight=22.2truecm
\oddsidemargin 0.45cm
\evensidemargin 0.45cm

\parindent=13pt

\def\ls{\lesssim}
\def\gs{\gtrsim}
\def\fz{\infty}

\renewcommand{\r}{\right}
\renewcommand{\l}{\left}

\def\supp{{\mathop\mathrm{\,supp\,}}}

\def\rr{{\mathbb R}}
\def\rn{{{\rr}^n}}
\def\zz{{\mathbb Z}}
\def\nn{{\mathbb N}}
\newcommand{\az}{\alpha}

\newcommand{\gz}{\gamma}

\newcommand{\lz}{\lambda}

\newcommand{\vz}{\varphi}

\newcommand{\wz}{\widetilde}
\newcommand{\oz}{\overline}

\newcommand{\cD}{{\mathcal D}}

\newcommand{\cF}{{\mathcal F}}

\newcommand{\cP}{{\mathcal P}}

\newcommand{\cS}{{\mathcal S}}

\newcommand{\db}{\dot{B}^s_{p,q}(\rn)}
\newcommand{\df}{\dot{F}^s_{p,q}(\rn)}
\newcommand{\dat}{\dot{A}_{p,q}^{s,\tau}(\rn)}
\newcommand{\dbt}{\dot{B}_{p,q}^{s,\tau}(\rn)}
\newcommand{\dft}{\dot{F}_{p,q}^{s,\tau}(\rn)}
\newcommand{\dsbt}{\dot{b}_{p,q}^{s,\tau}(\rn)}
\newcommand{\dsft}{\dot{f}_{p,q}^{s,\tau}(\rn)}
\newcommand{\dsat}{\dot{a}_{p,q}^{s,\tau}(\rn)}
\newcommand{\dah}{A\dot{H}_{p,q}^{s,\tau}(\rn)}
\newcommand{\dbh}{B\dot{H}_{p,q}^{s,\tau}(\rn)}
\newcommand{\dfh}{F\dot{H}_{p,q}^{s,\tau}(\rn)}
\newcommand{\dsbh}{b\dot{H}_{p,q}^{s,\tau}(\rn)}
\newcommand{\dsfh}{f\dot{H}_{p,q}^{s,\tau}(\rn)}
\newcommand{\dsah}{a\dot{H}_{p,q}^{s,\tau}(\rn)}

\def\gfz{\genfrac{}{}{0pt}{}}

\newcommand{\N}{{\mathbb N}}
\newcommand{\R}{{\mathbb R}}
\newcommand{\C}{{\mathbb C}}
\newcommand{\Z}{{\mathbb Z}}

\newtheorem{theorem}{Theorem}[section]
\newtheorem{lemma}{Lemma}[section]
\newtheorem{corollary}{Corollary}[section]
\newtheorem{proposition}{Proposition}[section]
\theoremstyle{definition}
\newtheorem{remark}{Remark}[section]
\newtheorem{definition}{Definition}[section]

\numberwithin{equation}{section}

\def\hs{\hspace{0.3cm}}

\begin{document}

\arraycolsep=1pt

\arraycolsep=1pt

\title{{\vspace{-5cm}\small\hfill\bf J. Math. Anal. Appl., to appear}\\
\vspace{4.5cm}\Large\bf Decompositions of
Besov-Hausdorff and Triebel-Lizorkin-Hausdorff Spaces and Their
Applications\footnotetext {\hspace{-0.35cm} 2000 {\it Mathematics
Subject Classification}. Primary 46E35; Secondary 42C40, 47G30.\endgraf
{\it Key words and phrases}. $\varphi$-transform, Hausdorff
capacity, Besov space, Triebel-Lizorkin space, embedding, atom,
molecule, trace, pseudo-differential operator.
\endgraf
The second author is supported by Grant-in-Aid for Young
Scientists (B) (No. 21740104) of
Japan Society for the Promotion of Science.
The third (corresponding) author is supported by the National
Natural Science Foundation (Grant No. 10871025) of China.}}
\author{Wen Yuan, Yoshihiro Sawano
and Dachun Yang\,\footnote{Corresponding author}}
\date{}
\maketitle

\begin{center}
\begin{minipage}{13cm}
{\small {\bf Abstract}\quad Let $p\in(1,\infty)$, $q\in[1,\infty)$,
$s\in\mathbb{R}$ and $\tau\in[0, 1-\frac{1}{\max\{p,q\}}]$. In this paper,
the authors establish the $\varphi$-transform characterizations of
Besov-Hausdorff spaces $B{\dot H}_{p,q}^{s,\tau}(\mathbb{R}^n)$ and
Triebel-Lizorkin-Hausdorff spaces $F{\dot
H}_{p,q}^{s,\tau}(\mathbb{R}^n)$ ($q>1$); as applications, the
authors then establish their embedding properties
(which on $B{\dot H}_{p,q}^{s,\tau}(\mathbb{R}^n)$ is also sharp),
smooth atomic and molecular decomposition
characterizations for suitable $\tau$.
Moreover, using their atomic and molecular decomposition
characterizations, the authors investigate the trace properties and
the boundedness of pseudo-differential operators with homogeneous
symbols in $B{\dot H}_{p,q}^{s,\tau}(\mathbb{R}^n)$ and $F{\dot
H}_{p,q}^{s,\tau}(\mathbb{R}^n)$ ($q>1$), which generalize the
corresponding classical results on homogeneous Besov and Triebel-Lizorkin spaces
when $p\in(1,\infty)$ and $q\in[1,\infty)$ by taking $\tau=0$.}
\end{minipage}
\end{center}

\vspace{0.1cm}

\section{Introduction}

\hskip\parindent To establish the connections between Besov and
Triebel-Lizorkin spaces with $Q$ spaces, which was an open problem
proposed by Dafni and Xiao in \cite{dx}, Yang and Yuan \cite{yy1,
yy2} introduced new classes of Besov-type spaces $\dbt$ and
Triebel-Lizorkin-type spaces $\dft$, which unify and generalize the
Besov spaces $\db$, Triebel-Lizorkin spaces $\df$, Morrey spaces,
Morrey-Triebel-Lizorkin spaces and $Q$ spaces. We pointed out that
the $Q$ spaces on $\rn$ were originally introduced by Ess\'en,
Janson, Peng and Xiao \cite{ejpx}; see also \cite{dx,ejpx,X1,X2} for
the history of $Q$ spaces and their properties.

Let $p\in(1,\infty)$, $q\in[1,\infty)$, $s\in\mathbb{R}$ and
$\tau\in[0, \frac{1}{(\max\{p,q\})'}]$, where and in what follows,
$t'$ denotes the conjugate index of $t\in [1,\fz)$. The
Besov-Hausdorff spaces $\dbh$ and Triebel-Lizorkin-Hausdorff spaces
$\dfh$ ($q>1$) were also introduced in \cite{yy1, yy2}; moreover, it
was proved therein that they are, respectively, the predual spaces
of $\dot{B}_{p',q'}^{-s,\tau}(\mathbb{R}^n)$ and
$\dot{F}_{p',q'}^{-s,\tau}(\mathbb{R}^n)$. The spaces $\dbh$ and
$\dfh$ were originally called the Hardy-Hausdorff spaces in
\cite{yy1, yy2}. However, it seems that it is more reasonable to
call them, respectively, the Besov-Hausdorff spaces and the
Triebel-Lizorkin-Hausdorff spaces. The spaces $\dbh$ and $\dfh$
unify and generalize the Besov space $\db$, the Triebel-Lizorkin
space $\df$ and the Hardy-Hausdorff space $HH^1_{-\alpha}(\rn)$,
where $HH^1_{-\alpha}(\rn)$ was introduced in \cite{dx} and was
proved to be the predual space of the space $Q_\alpha(\rn)$ therein.

It is well known that the wavelet decomposition plays an important
role in the study of function spaces and their applications;
see, for example, \cite{m92,mc97} and their references.
Moreover, the $\varphi$-transform
decomposition of Frazier and Jawerth \cite{fj85,fj88,fj}
is very similar in spirit to the wavelet decomposition, which
is also proved to be a powerful tool in the study
of function spaces and boundedness of operators, and
was further developed by Bownik \cite{b05, b07}.
In this paper, we establish the $\varphi$-transform
characterizations of the spaces $\dbh$ and $\dfh$; via
these characterizations, we also obtain their embedding properties
(which on $B{\dot H}_{p,q}^{s,\tau}(\mathbb{R}^n)$ is also sharp),
smooth atomic and molecular decomposition characterizations for
suitable $\tau$. Moreover, using their atomic and molecular
decomposition characterizations, we investigate the trace properties
and the boundedness of pseudo-differential operators with
homogeneous symbols (see \cite{gt}) in $\dbh$ and $\dfh$, which
generalizes the corresponding classical results on homogeneous Besov and
Triebel-Lizorkin spaces when $p\in (1,\fz)$ and $q\in[1,\fz)$ by
taking $\tau=0$; see, for example, Jawerth \cite[Theorem~5.1]{j77}
and \cite[Theorem~2.1]{j78} (or Frazier-Jawerth
\cite[Theorem~11.1]{fj}), and Grafakos-Torres
\cite[Theorems 1.1 and 1.2]{gt}. Recall that the
study of pseudo-differential operators with non-homogeneous symbols
on non-homogeneous Besov and Triebel-Lizorkin spaces
using $\vz$-transform arguments was started by Torres \cite{t90, t91};
the results in \cite{gt} are based on these works.
See also those articles for other references to previous work
on pseudo-differential operators on Triebel-Lizorkin spaces
using more classical methods. We will concentrate here
on $\vz$-transform arguments.

To recall the definitions of $\dbh$ and $\dfh$ in \cite{yy1, yy2},
we need some notation. Let $\cS(\rn)$ be the space of all Schwartz
functions on $\rn$. Following Triebel's \cite{t83}, set
$$\cS_\infty(\rn)\equiv\l\{\varphi\in\cS(\rn): \int_\rn
\varphi(x)x^\gamma\,dx=0\ \mbox{for all multi-indices}\ \gamma\in
\l(\nn\cup\{0\}\r)^n\r\}$$ and use $\cS'_\infty(\rn)$ to denote the
topological dual of $\cS_\infty(\rn)$, namely, the set of all
continuous linear functionals on $\cS_\infty(\rn)$ endowed with weak
$\ast$-topology. Recall that $\cS'(\rn)/\cP(\rn)$ and
$\cS'_\infty(\rn)$ are topologically equivalent, where
$\cS'(\rn)$ and $\cP(\rn)$ denote, respectively, the space
of all Schwartz distributions and the set of all polynomials
on $\rn$.

For each cube $Q$ in $\rn$, we denote its side length by
$\ell(Q)$, its center by $c_Q$, and set $j_Q\equiv-\log_2
\ell(Q)$. For $k=(k_1,\cdots,k_n)\in \zz^n$ and $j \in \zz$, let
$Q_{jk}$ be the dyadic cube $\{(x_1,\cdots, x_n):\ k_i \le
2^jx_i<k_i+1 \ \mathrm{for} \ i=1,\cdots,n\}\subset \rn$, $x_Q$
be the \textit{lower left-corner} $2^{-j}k$ of $Q=Q_{jk}$,
$\mathcal{D}(\rn)\equiv \{Q_{jk}\}_{j,\,k}$ and ${\mathcal
D}_j(\rn)\equiv\{ Q \in {\mathcal D}(\rn):\, \ell(Q)=2^{-j} \}$.
When dyadic cube $Q$ appears as an index, such as
$\sum_{Q\in\mathcal{D}(\rn)}$ and
$\{\cdot\}_{Q\in\mathcal{D}(\rn)}$, it is understood that $Q$
runs over all {\it dyadic cubes} in $\rn$.

For $x\in\rn$ and $r>0$, we write $B(x,\,r)\equiv \{y\in\rn:\,
|x-y|<r\}$. Next we recall the notion of Hausdorff capacities; see,
for example, \cite{ad, yy0}. Let $E \subset \rn$ and $d\in(0,\,n]$.
The {\it $d$-dimensional Hausdorff capacity of $E$} is defined by
\begin{equation*}
H^d(E)\equiv \inf \l\{\sum_jr_j^d:\,E\subset
\bigcup_jB(x_j,\,r_j)\r\},
\end{equation*} where the
infimum is taken over all covers $\{B(x_j,\,r_j)\}_{j=1}^\infty$ of
$E$ by countable families of open balls. It is well-known that $H^d$
is monotone, countably subadditive and vanishes on empty set.
Moreover, the notion of $H^d$ can be extended to $d=0$. In this
case, $H^0$ has the property that for all sets $E\subset\rn$,
$H^0(E)\ge1$, and $H^0(E)=1$ if and only if $E$ is bounded.

For any function $f: \rn \mapsto [0,\,\fz]$, the \textit{Choquet
integral of $f$ with respect to $H^d$} is defined by
$$\int_{\rn}f\,d H^d\equiv \int_0^{\fz}
H^d(\{x\in\rn:\ f(x)>\lz\})\,d\lz.$$ This functional is not
sublinear, so sometimes we need to use an equivalent integral with
respect to the $d$-dimensional dyadic Hausdorff capacity
$\widetilde{H}^d$, which is sublinear; see \cite{yy0} (also
\cite{yy1,yy2}) for the definition of dyadic Hausdorff capacities
and their properties.

Set $\R_+^{n+1}\equiv \rn\times (0,\fz)$. For any measurable
function $\omega$ on $\R_+^{n+1}$ and $x\in\rn$, we define its
\textit{nontangential maximal function} $N\omega (x)$ by setting
$N\omega(x)\equiv \sup_{|y-x|<t} |\,\omega(y,t)|.$

In what follows, for any $\varphi \in \cS(\R^n)$, we use $\cF\vz$ to
denote its Fourier transform, namely, for all $\xi\in\rn$,
$\cF\vz(\xi)=\int_\rn e^{-i\xi x}\vz(x)\,dx$. For all
$j\in\mathbb{Z}$ and $x\in\rn$, let $\varphi_j(x)\equiv
2^{jn}\varphi(2^jx)$. For any $p,\,q\in (0,\fz]$, let $(p \vee
q)\equiv \max\{p,\,q\}$; and for any $t\in [1,\fz]$, we denote
by $t'$ the conjugate index, namely, $1/t+1/t'=1$.

We now recall the notions of $\dbh$ and $\dfh$ in
\cite[Definition 5.2]{yy1} and \cite[Definition 6.1]{yy2}.

\begin{definition}\label{d1.1} Let $\varphi \in \cS(\R^n)$ such
that $\supp\cF\vz\subset\{\xi\in\rn:\ 1/2 \le |\xi| \le 2 \}$
and $\cF \varphi$ never vanishes
on $\{\xi\in\rn:\ 3/5 \le |\xi| \le 5/3 \}$.
Let $p\in(1,\fz)$ and $s \in \R$.

(i) If $q\in[1,\fz)$ and $\tau\in[0, \frac{1}{(p \vee q)'}]$,
{\it the Besov-Hausdorff space
$B\dot{H}_{p,q}^{s,\tau}(\rn)$} is then defined to be the set of all
$f \in \cS_\infty'(\R^n)$ such that
$$
\| \, f \, \|_{B\dot{H}_{p,q}^{s,\tau}(\R^n)} \equiv \inf_\omega
\left\{ \sum_{j \in \Z}2^{jsq} \l\| \varphi_j\ast f
[\omega(\cdot,2^{-j})]^{-1} \r\|^q_{L^p(\rn)}
\right\}^\frac1q<\fz,
$$
where $\omega$ runs over all nonnegative Borel measurable functions
on $\R_+^{n+1}$ such that
\begin{equation}
\label{1.1} \int_{\R^n} [N\omega(x)]^{(p \vee q)'} \,dH^{n\tau(p\vee
q)'}(x) \le 1
\end{equation}
and with the restriction that for any $j\in\zz$, $\omega(\cdot,
2^{-j})$ is allowed to vanish only where $\vz_j\ast f$ vanishes.

(ii) If $q\in(1,\fz)$ and $\tau\in[0,\frac{1}{(p \vee q)'}]$,
{\it the Triebel-Lizorkin-Hausdorff
space $F\dot{H}_{p,q}^{s,\tau}(\rn)$} is then
defined to be the set of
all $f \in \cS_\infty'(\R^n)$ such that
$$
\| \, f \, \|_{F\dot{H}_{p,q}^{s,\tau}(\rn)}
\equiv
\inf_\omega
\left\|
\left\{
\sum_{j \in \Z}2^{jsq}
\l|\varphi_j\ast f[\omega(\cdot,2^{-j})]^{-1}\r|^q
\right\}^\frac1q\right\|_{L^p(\rn)}<\fz,
$$
where $\omega$ runs over all nonnegative Borel measurable functions
on $\R_+^{n+1}$ such that $\omega$ satisfies \eqref{1.1} and with
the restriction that for any $j\in\zz$, $\omega(\cdot, 2^{-j})$ is
allowed to vanish only where $\vz_j\ast f$ vanishes.
\end{definition}

To simplify the presentation, in what follows, we use $\dah$ to
denote either $\dbh$ or $\dfh$. When $\dah$ denotes $\dfh$, then it
will be understood tacitly that $q\in(1,\fz)$. It was proved
in \cite[Proposition 5.1]{yy1} and \cite[Section 6]{yy2}
that the space $\dah$ is independent of the choices of $\varphi$.
We also remark that when
$\tau=0$, then $B{\dot H}_{p,q}^{s,0}(\rn)\equiv {\dot
B}_{p,q}^s(\rn)$ and $F{\dot H}_{p,q}^{s,0}(\rn)\equiv {\dot
F}_{p,q}^s(\rn)$; when $\alpha\in(0,1)$, $s=-\alpha$, $p=q=2$ and
$\tau=1/2-\alpha/n$, then $A{\dot
H}_{2,2}^{-\alpha,1/2-\alpha}(\rn)\equiv HH^1_{-\alpha}(\rn)$,
which is the predual space of $Q_\alpha(\rn)$.

We now recall the notions of Besov-type spaces $\dbt$ and
Triebel-Lizorkin-type spaces $\dft$ in \cite[Definition 1.1]{yy2}
and \cite[Definition 3.2]{yy1}.

\begin{definition}\label{d1.2}
Let $s\in\rr$, $\tau\in[0,\fz)$, $q\in(0,\,\fz]$ and $\varphi$ be as
in Definition \ref{d1.1}.

(i) If $p\in(0,\,\fz]$, {\it the Besov-type space $\dbt$} is defined
to be the set of all $f\in\cS_\fz'(\rn)$ such that
$\|f\|_{\dbt}<\fz$, where
\begin{equation*}
\|f\|_{\dbt}\equiv\sup_{P\in \mathcal{D}(\rn)}\dfrac{1}
{|P|^{\tau}}\l\{\sum^{\fz}_{j=j_P}\l[\int_P (2^{js}|\varphi_j \ast
f(x)|)^p\, dx\r]^{q/p}\r\}^{1/q}
\end{equation*}
with suitable modifications made when $p=\fz$ or $q=\fz$.

(ii) If $p\in(0,\,\fz)$, {\it the Triebel-Lizorkin-type space
$\dft$} is defined to be the set of all $f\in\cS_\fz'(\rn)$ such
that $\|f\|_{\dft}<\fz$, where
\begin{equation*}
\|f\|_{\dft}\equiv\sup_{P\in \mathcal{D}(\rn)}\dfrac{1}
{|P|^{\tau}}\l\{\int_P \l[\sum^{\fz}_{j=j_P}(2^{js}|\varphi_j \ast
f(x)|)^q\, dx\r]^{p/q}\r\}^{1/p}
\end{equation*}
with suitable modifications made when $q=\fz$.
\end{definition}

Similarly, we use $\dat$ to denote $\dbt$ or $\dft$. If $\dat$ means
$\dft$, then the case $p=\fz$ is excluded. It was proved in
\cite[Corollary 3.1]{yy2} that the space $\dat$ is independent of
the choices of $\varphi$. Also, \cite[Theorem 5.1]{yy1} and
\cite[Theorem 6.1]{yy2} show that
$(\dah)^*=\dot{A}_{p',q'}^{-s,\tau}(\rn)$ for all $s\in\R$,
$p\in(1,\fz)$, $q\in[1,\fz)$ and $\tau\in[0,\frac1{(p\vee q)'}]$.
This result partially extends the well-known dual results on Besov
spaces, Triebel-Lizorkin spaces and the recent result that
$(HH^1_{-\alpha}(\rn))^*=Q_{\alpha}(\rn)$ obtained in \cite[Theoren
7.1]{dx}.

We remark that when $\tau=0$, then ${\dot B}_{p,q}^{s,0}(\rn)\equiv
{\dot B}_{p,q}^s(\rn)$ and ${\dot F}_{p,q}^{s,0}(\rn)\equiv {\dot
F}_{p,q}^s(\rn)$; when $\alpha\in(0,1)$, $s=\alpha$, $p=q=2$ and
$\tau=1/2-\alpha/n$, then ${\dot
A}_{2,2}^{\alpha,1/2-\alpha}(\rn)\equiv Q_\alpha(\rn)$; see
\cite[Corollary 3.1]{yy1}. It was proved in \cite{syy1} that
Besov-Morrey spaces in \cite{st} are proper subspaces of $\dbt$ and
that Triebel-Lizorkin-Morrey spaces in \cite{st} are special cases
of $\dft$. It was also proved in \cite{st} that Morrey spaces are
special cases of Triebel-Lizorkin-Morrey spaces. The $\vz$-transform
characterizations, embedding properties, smooth atomic and molecular
decomposition characterizations of $\dat$ were obtained in
\cite{yy2}, which were further applied in \cite{syy1} to establish
their trace properties and the boundedness of pseudo-differential
operators with homogeneous symbols in $\dat$.

In Section 2 of this paper, we establish the $\varphi$-transform
characterizations (see Theorem \ref{t2.1} below) and embedding
properties (Proposition \ref{p2.2} below) of $\dah$. In particular,
we show, in Proposition \ref{p2.3} below, that the embedding
property of $\dbh$ is sharp. Using these $\varphi$-transform
characterizations, in Section 3 below, we obtain the boundedness of
almost diagonal operators and the smooth atomic and molecular
decomposition characterizations of $\dah$. As applications of these
decomposition characterizations, in Section 4 of this paper, we
investigate the trace properties (see Theorem \ref{t4.2} below) and
the boundedness of pseudo-differential operators with homogeneous
symbols in $\dah$ (see Theorem \ref{t4.1} below). We pointed out that
the method used in the proof of Theorem \ref{t4.1} comes
from \cite{ftw,fhjw,t90,t91,gt}.

Notice that the spaces $\dah$ are only known to be quasi-normed
spaces so far due to the infimum on $\omega$ appearing in their definitions,
which satisfies the condition \eqref{1.1}.
This brings us some essential difficulties, comparing with the methods used in
\cite{yy2,syy1} for the spaces $\dbt$ and $\dft$.
To overcome these new difficulties, we use the Aoki theorem (see \cite{ao} and
the proof of Theorem \ref{t3.1} below) and establish some subtly equivalent
characterizations on the Hausdorff capacity (see Lemmas \ref{l2.4}, \ref{l3.1}
and \ref{l4.1} below). These characterizations on the Hausdorff capacity
are geometrical, whose proofs are constructive and
invoke some covering lemmas. Propositions \ref{p2.2} and
\ref{p2.3} and Theorem \ref{t3.1} below reflect the differences between
the spaces $\dbh$ and $\dfh$ and the spaces $\dbt$ and $\dft$;
see also Remarks \ref{r2.3} and \ref{r3.1} below.

Finally, we make some conventions on notation. Throughout the whole
paper, we denote by $C$ a positive constant which is independent of
the main parameters, but it may vary from line to line. The symbol
$A\ls B$ means that $A\le CB$. If $A\ls B$ and $B\ls A$, then we
write $A\sim B$. If $E$ is a subset of $\rn$, we denote by $\chi_E$
the characteristic function of $E$. For all $Q \in \cD(\rn)$ and
$\vz\in\cS(\rn)$, set
$\varphi_Q(x)\equiv|Q|^{-1/2}\varphi(2^{j_Q}(x-x_Q))$ and
$\widetilde{\chi}_Q(x)\equiv|Q|^{-1/2}\chi_Q(x)$ for all $x\in\rn$.
We also set $\nn\equiv\{1,\, 2,\, \cdots\}$ and
$\zz_+\equiv(\nn\cup\{0\})$.

\section{The $\varphi$-transform characterizations}

\hskip\parindent In this section, we establish the
$\varphi$-transform characterizations of the spaces $\dah$ in the
sense of Frazier and Jawerth; see, for example, \cite{fj85,fj88,fj,fjw}. We
begin with the definition of the corresponding sequence space of
$\dah$.

\begin{definition}\label{d2.1} Let $p\in(1,\fz)$ and $s\in\rr$.

(i) If $q\in[1,\fz)$ and $\tau\in[0, \frac{1}{(p \vee q)'}]$, {\it
the sequence space $\dsbh$} is then defined to be the set of all
$t=\{t_Q\}_{Q \in \cD(\rn)}\subset \mathbb{C}$ such that
\begin{equation*}
\| \, t \, \|_{\dsbh} \equiv \inf_\omega \left\{ \sum_{j \in
\Z}2^{jsq} \left\| \sum_{Q \in \cD_j(\rn)}|t_Q|\widetilde{\chi}_Q
[\omega(\cdot,2^{-j})]^{-1} \right\|^q_{L^p(\rn)}
\right\}^\frac1q<\fz,
\end{equation*}
where the infimum is taken over all nonnegative Borel measurable
functions $\omega$ on $\rr^{n+1}_+$ such that $\omega$ satisfies
\eqref{1.1} and with the restriction that for any $j\in\zz$,
$\omega(\cdot,2^{-j})$ is allowed to vanish only where $\sum_{Q \in
\cD_j(\rn)}|t_Q|\widetilde{\chi}_Q$ vanishes.

(ii) If $q\in(1,\fz)$ and $\tau\in[0, \frac{1}{(p \vee q)'}]$, {\it
the sequence space $\dsfh$} is then defined to be the set of all
$t=\{t_Q\}_{Q \in \cD(\rn)}\subset \mathbb{C}$ such that
\begin{equation*}
\| \, t \, \|_{\dsfh} \equiv \inf_\omega \left\| \left\{ \sum_{j \in
\Z}2^{jsq} \left(\sum_{Q \in \cD_j(\rn)}|t_Q|\widetilde{\chi}_Q
[\omega(\cdot,2^{-j})]^{-1}\right)^q
\right\}^\frac1q\right\|_{L^p(\rn)}<\fz,
\end{equation*}
where the infimum is taken over all nonnegative Borel measurable
functions $\omega$ on $\rr^{n+1}_+$ with the same restrictions as in
(i).
\end{definition}

Similarly, in what follows, we use $\dsah$ to denote either $\dsbh$
or $\dsfh$. When $\dsah$ denotes $\dsfh$, then it will be understood
tacitly that $q\in(1,\fz)$. We remark that $\|\cdot\|_{\dsah}$ is a
quasi-norm, namely, there exists a nonnegative constant
$\rho\in[0,1]$ such that for all $t_1,\, t_2\in \dsah$,
\begin{equation}\label{2.1}
\|t_1+t_2\|_{\dsah}\le 2^\rho (\|t_1\|_{\dsah}+\|t_2\|_{\dsah}).
\end{equation}

\begin{remark}\label{r2.1}
On \eqref{1.1}, we observe that if $0<a\le b\le \frac1{\tau}$, then
for all nonnegative measurable functions $\omega$ on $\R^{n+1}_+$,
$\int_{\rn}\l[N\omega(x)\r]^{a}\, d H^{n\tau a}(x)<\fz$ induces
$\int_{\rn}\l[N\omega(x)\r]^{b}\, d H^{n\tau b}(x)<\fz.$ In fact,
without loss of generality, we may assume that
$\int_{\rn}[N\omega(x)]^{a}\, d H^{n\tau a}(x)\le1$. For all
$l\in\zz$, set $E_l\equiv\{x\in\rn:\ N\omega(x)>2^l\}$. Then
\begin{eqnarray*}
1\ge \int_{\rn}\l[N\omega(x)\r]^{a}\, d H^{n\tau a}(x) \sim
\sum_{l\in\zz}2^{la} H^{n\tau a}(E_l).
\end{eqnarray*}
For each $l\in\zz$, we choose a ball covering $\{B(x_{jl},
r_{jl})\}_j$ of $E_l$ that almost attains $H^{n\tau a}(E_l)$\,:\,
$H^{n\tau a}(E_l)\sim \sum_j
r_{jl}^{n\tau a}$. Thus, $\sum_{l\in\zz}2^{la}\sum_j r_{jl}^{n\tau
a}\ls1$, and hence, for all $j$ and $l$, $2^lr_{jl}^{n\tau}\ls1$.
Then $2^{lb}r_{il}^{n\tau b}\ls 2^{la}r_{il}^{n\tau a}$ since $a\le
b$ and
\begin{eqnarray*}
\int_{\rn}\l[N\omega(x)\r]^{b}\, d H^{n\tau b}(x) &&\sim
\sum_{l\in\zz}2^{lb} H^{n\tau b}(E_l)\ls \sum_{l\in\zz}2^{lb}\sum_j
r_{jl}^{n\tau b}\ls\sum_{l\in\zz}2^{la}\sum_j r_{jl}^{n\tau a},
\end{eqnarray*}
which yields the above claim.
\end{remark}

Let $\varphi$ be as in Definition \ref{d1.1}. For all $x\in\rn$, set
$\wz{\vz}(x)\equiv \oz{\vz(x)}$. Then by \cite[Lemma (6.9)]{fjw},
there exists a function $\psi\in\cS(\rn)$ such that
$\supp\cF\psi\subset \{\xi\in\rn:\ 1/2 \le |\xi| \le 2 \}$, $\cF
\psi$ never vanishes on $\{\xi\in\rn:\ 3/5 \le |\xi| \le 5/3 \}$ and
that for all $\xi\in\rn$, $\sum_{j\in\mathbb{Z}} \cF{\widetilde{\varphi}}
(2^{-j}\xi)\cF\psi(2^{-j}\xi)=\chi_{\rn\setminus\{0\}}(\xi)$.
Furthermore, we have the following \textit{Calder\'on reproducing
formula} which asserts that for all $f\in\cS'_\fz(\rn)$,
\begin{equation}\label{2.2}
f=\sum_{j\in\mathbb{Z}}\psi_j\ast \widetilde{\varphi}_j\ast
f=\sum_{Q\in\cD(\rn)} \langle f,\varphi_Q \rangle \psi_Q
\end{equation}
in $\cS'_\fz(\rn)$; see \cite[Lemma 2.1]{yy2}.

Now we recall the notion of the $\varphi$-transform; see, for
example, \cite{fj85,fj88,fj,fjw}.

\begin{definition}\label{d2.2}
Let $\varphi$, $\psi\in\cS(\rn)$ such that $\supp\cF\vz$,
$\supp\cF\psi\subset\{\xi\in\rn:\ 1/2 \le |\xi| \le 2 \}$, $\cF
\varphi$, $\cF \psi$ never vanish on $\{\xi\in\rn:\ 3/5 \le |\xi|
\le 5/3 \}$ and $\sum_{j \in
\Z}\cF(\widetilde{\varphi}_j)\cF(\psi_j) \equiv \chi_{\rn \setminus
\{0\}}.$

(i) The {\it $\varphi$-transform $S_\varphi$} is defined to be
the map taking each $f\in\cS'_\fz(\rn)$ to the sequence $
S_\varphi f \equiv \{(S_\varphi f)_Q\}_{Q \in \cD(\rn)}$, where
$(S_\varphi f)_Q\equiv\langle f,\varphi_Q \rangle$ for all
$Q\in\cD(\rn)$.

(ii) The {\it inverse $\varphi$-transform $T_\psi$} is defined to
be the map taking a sequence
$t=\{t_Q\}_{Q\in\cD(\rn)}\subset\mathbb{C}$ to $T_\psi t \equiv
\sum_{Q \in \cD(\rn)}t_Q \psi_Q$.
\end{definition}

To show that $T_\psi$ is well defined for all $t\in\dsah$, we
need the following conclusion.

\begin{lemma}\label{l2.1}
Let $p\in(1,\fz), \, q\in[1,\fz), \, s \in \R$ and $\tau\in[0,
\frac{1}{(p \vee q)'}]$. Then for all $t\in\dsah$, $T_\psi
t=\sum_{Q \in \cD(\rn)}t_Q \psi_Q$ converges in
$\cS'_\infty(\rn)$; moreover, $T_\psi: \dsah \to
\cS'_\infty(\rn)$ is continuous.
\end{lemma}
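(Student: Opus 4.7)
The plan is to prove, for every $\phi\in\cS_\fz(\rn)$, the estimate
$$
\sum_{Q\in\cD(\rn)}|t_Q|\,|\langle \psi_Q,\phi\rangle|\ls C(\phi)\,\|t\|_{\dsah},
$$
where $C(\phi)$ depends on only finitely many Schwartz seminorms of $\phi$. Absolute convergence of this sum will simultaneously yield convergence of $T_\psi t=\sum_{Q\in\cD(\rn)}t_Q\psi_Q$ in $\cS'_\fz(\rn)$ and the continuity of $T_\psi:\dsah\to\cS'_\fz(\rn)$.

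The first ingredient is a classical Frazier--Jawerth pointwise decay of the pairing. Because $\cF\psi$ vanishes to infinite order at the origin, $\psi$ has all vanishing moments; combined with the analogous property of $\phi\in\cS_\fz(\rn)$, a Taylor-expansion argument (expanding $\phi$ about $x_Q$ when $j_Q\ge 0$ and dually when $j_Q<0$) gives, for every $L>0$ and every $M>n$,
$$
|\langle \psi_Q,\phi\rangle|\le C(\phi,L,M)\,|Q|^{1/2}\min\{\ell(Q)^L,\ell(Q)^{-L}\}(1+|x_Q|)^{-M}.
$$

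Fix $j\in\zz$ and set $g_j\equiv \sum_{Q\in\cD_j(\rn)}|t_Q|\widetilde{\chi}_Q$, which is constant on each $Q\in\cD_j(\rn)$. Since $|t_Q||Q|^{-1/2}$ equals $g_j|_Q$, the resulting sum over $\cD_j(\rn)$ rewrites as $\int_\rn g_j(x)(1+|x_{Q(x,j)}|)^{-M}\,dx$, where $Q(x,j)\in\cD_j(\rn)$ is the cube containing $x$; this is comparable to $(1+|x|)^{-M}$ up to a factor of order $(1+\ell(Q))^M$, which can be absorbed into $\min\{\ell(Q)^L,\ell(Q)^{-L}\}$ by choosing $L$ large. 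Given $\eps>0$, choose a nonnegative $\omega$ satisfying \eqref{1.1} that is $\eps$-close to the infimum defining $\|t\|_{\dsah}$, and apply H\"older's inequality in $L^p$ to get
$$
\int_\rn g_j(x)(1+|x|)^{-M}\,dx\le \|g_j[\omega(\cdot,2^{-j})]^{-1}\|_{L^p(\rn)}\cdot\|\omega(\cdot,2^{-j})(1+|\cdot|)^{-M}\|_{L^{p'}(\rn)}.
$$
In the Besov case the first factor is bounded by $(1+\eps)\,2^{-js}\|t\|_{\dsbh}$ directly from the definition; in the Triebel--Lizorkin case the same bound follows from the pointwise inequality $2^{js}g_j[\omega(\cdot,2^{-j})]^{-1}\le \{\sum_k 2^{ksq}(g_k[\omega(\cdot,2^{-k})]^{-1})^q\}^{1/q}$ after taking $L^p$-norms.

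The crux is to bound the second H\"older factor uniformly in $j$ and in all admissible $\omega$. Since $\omega(y,2^{-j})\le N\omega(y)$ pointwise, this reduces to showing
$$
\int_\rn [N\omega(y)]^{p'}(1+|y|)^{-Mp'}\,dy\le C
$$
for some $C=C(M,p,q,\tau)$ whenever $\int_\rn [N\omega]^{(p\vee q)'}\,dH^{n\tau(p\vee q)'}\le 1$ and $M$ is large enough. This is where the Hausdorff-capacity condition must be converted into ordinary Lebesgue integrability: one partitions $\rn$ into unit dyadic cubes, uses that Lebesgue measure is controlled by $H^d$ on sets of bounded diameter when $d\le n$ (which holds since $n\tau(p\vee q)'\le n$ under the hypothesis on $\tau$), and interpolates via H\"older between the exponents $p'$ and $(p\vee q)'$ when $q>p$. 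Combining all of this gives, for $L$ sufficiently large,
$$
\sum_{Q\in\cD_j(\rn)}|t_Q|\,|\langle \psi_Q,\phi\rangle|\ls C(\phi)\,\|t\|_{\dsah}\,2^{-js}\min\{2^{-jL},2^{jL}\},
$$
which is summable over $j\in\zz$. The main obstacle is this last conversion step, especially in the regime $q>p$ where the capacity exponent differs from $p'$; this is precisely the role of the geometric Hausdorff-capacity characterizations the authors prepare in Lemmas \ref{l2.4}, \ref{l3.1} and \ref{l4.1}.
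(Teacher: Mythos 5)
Your overall strategy (absolute convergence of $\sum_{Q}|t_Q|\,|\langle\psi_Q,\phi\rangle|$ via the Frazier--Jawerth decay of the pairing, then H\"older against $\omega(\cdot,2^{-j})$) is reasonable, but the step you yourself call the crux is false as stated. When $q>p$ one has $(p\vee q)'=q'<p'$, so you are asking to control the Lebesgue integral of the \emph{higher} power $[N\omega]^{p'}$ by the Choquet integral of the \emph{lower} power $[N\omega]^{q'}$; H\"older on unit cubes goes the wrong way and no interpolation can rescue it. Concretely, take $\tau=1/q'$ (allowed, since $\tau\le 1/(p\vee q)'$), so that $H^{n\tau q'}=H^n$ is comparable to Lebesgue measure on open sets, and let $\omega(y,t)=r^{-n/q'}$ for $|y|<r$, $0<t<r$, and $0$ otherwise. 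Then $\int_{\rn}[N\omega]^{q'}\,dH^{n\tau q'}\sim 1$, while $\int_{\rn}[N\omega(y)]^{p'}(1+|y|)^{-Mp'}\,dy\gtrsim r^{n(1-p'/q')}\to\fz$ as $r\to0$. The information you discard when you replace $\omega(\cdot,2^{-j})$ by $N\omega$ is exactly what saves the argument: \eqref{1.1} forces the elementary scale-wise bound $\omega(x,2^{-j})\ls 2^{jn\tau}$, whence $\int_{\rn}\omega(x,2^{-j})^{p'}(1+|x|)^{-Mp'}\,dx\ls 2^{jn\tau(p'-q')}\int_{\rn}[N\omega(x)]^{q'}(1+|x|)^{-Mp'}\,dx\ls 2^{jn\tau(p'-q')}$, and this merely geometric growth in $j$ is absorbed by $\min\{2^{-jL},2^{jL}\}$ for $L$ large. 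With that correction your route closes (your unit-cube argument does work for the exponent $(p\vee q)'$ itself, since $n\tau(p\vee q)'\le n$).

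For comparison, the paper's proof is more elementary and sidesteps the capacity-to-Lebesgue conversion entirely: from $\omega(x,2^{-j})\ls 2^{jn\tau}$ and H\"older on the single cube $Q$ it extracts the individual coefficient bound $|t_Q|\ls|Q|^{s/n+1/2-\tau-1/p}\|t\|_{\dsah}$ (see \eqref{2.4}), and then sums the whole double series using the decay estimate \eqref{2.6} together with the counting estimate \eqref{2.5}. In particular your closing attribution is off: the geometric Hausdorff-capacity Lemmas \ref{l2.4}, \ref{l3.1} and \ref{l4.1} are not used for Lemma \ref{l2.1} at all; they serve the sharpness of the embedding, the almost diagonal operators, and the trace theorem. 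A minor further point: the decay $(1+|x_Q|)^{-M}$ you assert for $|\langle\psi_Q,\phi\rangle|$ uniformly in $Q$ is only correct after absorbing a factor $(1+\ell(Q))^{CM}$ for large cubes into $\min\{\ell(Q)^{L},\ell(Q)^{-L}\}$, which is why \eqref{2.6} carries the normalization $\max\{1,|Q|\}$.
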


\begin{proof}
By similarity, we only consider the space $\dsbh$.

Let $t=\{t_Q\}_{Q\in\cD(\rn)}\in\dsbh$. We need to show that there
exists an $M\in\zz_+$ such that for all $\phi\in\cS_\fz(\rn)$,
$\sum_{Q \in \cD(\rn)}|t_Q||\langle \psi_Q,\phi \rangle| \lesssim \|
\phi \|_{\cS_M}$, where and in what follows, for all $M\in\zz_+$ and
$\vz\in\cS(\rn)$, we set $\|\vz\|_{\cS_M}\equiv \sup_{|\gamma|\le M
}\sup_{x\in \rn} |\partial^\gamma\vz (x)|(1+|x|)^{n+M+|\gamma|}.$

Choose a Borel function $\omega$ that almost attains the infimum in
Definition \ref{d2.1} (i). That is, $\omega$ is a function on $\rr^{n+1}_+$
satisfying \eqref{1.1} as well as
\begin{equation}\label{2.3}
\left\{ \sum_{j \in \Z}2^{jsq} \left\| \sum_{Q \in
\cD_j(\rn)}|t_Q|\widetilde{\chi}_Q [\omega(\cdot,2^{-j})]^{-1}
\right\|^q_{L^p(\rn)} \right\}^\frac1q \le 2 \| \, t \,
\|_{\dsbh}.
\end{equation}
A simple consequence obtained from (\ref{1.1}) is that for all
$(x,s) \in \R^{n+1}_+$, $\omega(x,s) \lesssim s^{-n\tau}$; see
\cite[Remark 4.1]{yy1}. Then for all $Q \in \cD_j(\rn)$, by
H\"older's inequality and \eqref{2.3}, we have
\begin{align}
\label{2.4}
|t_Q|\le |Q|^{-\tau-\frac1p}|t_Q| \left(\int_Q
[\omega(x,2^{-j})]^{-p}\,dx\right)^\frac{1}{p} \lesssim
|Q|^{\frac{s}{n}+\frac{1}{2}-\tau-\frac{1}{p}} \|t \|_{\dsbh}.
\end{align}
Recall that as a special case of \cite[Lemma 2.11]{b07}, there exists a positive
constant $L_0$ such that for all $j\in\zz$,
\begin{equation}\label{2.5}
\sum_{Q\in\cD_j(\rn)}\l(1+\frac{|x_Q|^n}{\max\{1,|Q|\}}\r)^{-L_0}\ls 2^{n|j|}.
\end{equation}
Furthermore, it was proved in \cite[p.\,10]{yy2} that if $
L>\max\{1/p+1/2-s/n-\tau,1/p+3/2+s/n+\tau, L_0\}$,
then there exists an $M\in\zz_+$ such that
for all $Q\in\cD_j(\rn)$,
\begin{equation} \label{2.6} |\langle\psi_Q,
\phi\rangle|\ls \|\phi\|_{\cS_M}\l(1+
\frac{|x_Q|^n}{\max\{1,|Q|\}}\r)^{-L}\l(\min\{2^{-jn},
2^{jn}\}\r)^L;
\end{equation}
see also \cite[(3.18)]{b07}. Using \eqref{2.4}, \eqref{2.6} and
\eqref{2.5}, we conclude that
\begin{eqnarray*}
\sum_{Q \in \cD(\rn)}|t_Q| |\langle \psi_Q,\phi \rangle|
&&\lesssim \| t \|_{\dsbh}\| \phi \|_{\cS_M}\\
&&\hs\times\sum_{Q \in \cD(\rn)}
|Q|^{\frac{s}{n}+\frac{1}{2}-\tau-\frac{1}{p}}
\left(1+\frac{|x_Q|^n}{\max\{1,|Q|\}}\right)^{-L}
2^{-L|j_Q|n}\\
&&\lesssim \|t \|_{\dsbh} \| \phi\|_{\cS_M},
\end{eqnarray*}
which completes the proof of Lemma \ref{l2.1}.
\end{proof}

Now we are ready to present our main result of this section.

\begin{theorem}\label{t2.1} Let $p\in(1,\fz), \,
q\in[1,\fz), \, s \in \R$, $\tau\in[0, \frac{1}{(p \vee q)'}]$,
$\varphi$ and $\psi$ be as in Definition \ref{d2.2}. Then
$S_\varphi: \dah \to \dsah$ and $T_\psi: \dsah \to \dah$ are
bounded; moreover, $T_\psi\circ S_\varphi$ is the identity on
$\dah$.
\end{theorem}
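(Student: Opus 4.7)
The plan is to prove the three assertions of Theorem \ref{t2.1} in turn, invoking the Calder\'on reproducing formula \eqref{2.2}, the almost orthogonality of the family $\{\vz_j\ast\psi_{j'}\}_{j,j'\in\zz}$, and a subtle modification of the weight $\omega$ preserving the condition \eqref{1.1}. The reproducing identity $T_\psi\circ S_\varphi=\mathrm{id}$ on $\dah$ is essentially immediate: since $\dah\subset\cS'_\infty(\rn)$ by definition and $T_\psi\colon\dsah\to\cS'_\infty(\rn)$ is continuous by Lemma \ref{l2.1}, \eqref{2.2} yields $T_\psi(S_\varphi f)=\sum_{Q\in\cD(\rn)}\langle f,\vz_Q\rangle\psi_Q=f$ in $\cS'_\infty(\rn)$ for every $f\in\dah$.

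For the boundedness of $S_\varphi\colon\dah\to\dsah$, I would translate $\langle f,\vz_Q\rangle$ into a pointwise value of a convolution via the standard $\vz$-transform identity and observe that, for each $j\in\zz$ and $x\in\rn$, the sum $\sum_{Q\in\cD_j(\rn)}|\langle f,\vz_Q\rangle|\wz\chi_Q(x)$ reduces to the single term indexed by the unique $Q\in\cD_j(\rn)$ containing $x$. Since $x$ lies within distance $\sqrt{n}\,2^{-j}$ of $x_Q$, a Peetre-type maximal function estimate bounds this quantity by $(\wz\vz_{j,a}^\ast f)(x)$ for sufficiently large $a$. Given $f\in\dah$ together with a Borel $\omega$ nearly attaining the infimum in $\|f\|_{\dah}$, the crux of the argument is to establish a pointwise estimate of the form
\[
(\wz\vz_{j,a}^\ast f)(x)\,[\omega(x,2^{-j})]^{-1}\ls\cM^{(r)}\!\left(|\vz_j\ast f|\,[\wz\omega(\cdot,2^{-j})]^{-1}\right)(x),
\]
for a modified nonnegative Borel function $\wz\omega$ on $\R_+^{n+1}$ still satisfying \eqref{1.1} up to a multiplicative constant, with the Hardy--Littlewood maximal operator $\cM^{(r)}$ of order $r<p$. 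The scalar $L^p$ maximal inequality then handles the Besov case, and the Fefferman--Stein vector-valued inequality handles the Triebel--Lizorkin case. The passage $\omega\to\wz\omega$ is exactly what the geometric Hausdorff-capacity characterization of Lemma \ref{l2.4} below is designed to justify.

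For the boundedness of $T_\psi\colon\dsah\to\dah$, given $t=\{t_Q\}_{Q\in\cD(\rn)}\in\dsah$, the Fourier support conditions on $\vz$ and $\psi$ force $\vz_j\ast\psi_{j'}\equiv 0$ whenever $|j-j'|\ge 2$, so that
\[
|\vz_j\ast T_\psi t(x)|\le\sum_{|j'-j|\le 1}\sum_{Q\in\cD_{j'}(\rn)}|t_Q|\,|\vz_j\ast\psi_Q(x)|.
\]
The rapid decay of the Schwartz function $\vz_j\ast\psi_{j'}$ yields $|\vz_j\ast\psi_Q(x)|\ls|Q|^{1/2}(1+2^{j'}|x-x_Q|)^{-M}$ for every $M\in\nn$, and a discrete averaging estimate controls the inner sum by $\cM^{(r)}\!\bigl(\sum_{Q\in\cD_{j'}(\rn)}|t_Q|\wz\chi_Q\bigr)(x)$. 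Running the same weight-modification in reverse, converting a weight $\omega$ nearly attaining the infimum in $\|t\|_{\dsah}$ into an admissible weight for the function norm, yields $\|T_\psi t\|_{\dah}\ls\|t\|_{\dsah}$. The finite sum over $|j'-j|\le 1$ is recombined via Aoki's theorem, which supplies an equivalent $r$-norm on $\dah$ compatible with the quasi-triangle inequality \eqref{2.1}.

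The principal obstacle is the weight-modification step. In the classical $\tau=0$ setting there is no weight, and one simply invokes the Fefferman--Stein or scalar maximal inequalities; here, the Hausdorff-capacity side condition \eqref{1.1} must be preserved on each side of every inequality. Lemma \ref{l2.4}, whose proof is geometric and constructive and exploits dyadic covering arguments, provides exactly the flexibility to replace $\omega$ by a comparable admissible weight adapted to the averaging operation; combining this with Aoki's theorem to accommodate the quasi-norm nature of $\|\cdot\|_{\dsah}$ constitutes the core technical content of the proof.
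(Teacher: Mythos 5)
Your overall strategy is the correct one and is essentially the paper's: the identity $T_\psi\circ S_\varphi=\mathrm{id}$ follows from \eqref{2.2} and Lemma \ref{l2.1} exactly as you say, and the two boundedness assertions reduce, via a Peetre-type maximal function (equivalently, via the sequences $\sup(f)$ and $\inf_\gamma(f)$ of Lemma \ref{l2.3} and the majorant $t^*_{p\wedge q,\lambda}$ of Lemma \ref{l2.2}), to a Hardy--Littlewood maximal estimate performed with a modified weight. Two points in your sketch need correction, however.

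First, the weights in your displayed pointwise estimate sit on the wrong sides. Following the proof of Lemma \ref{l2.2}, one decomposes the supremum (or the sum over $P\in\cD_j(\rn)$) into annuli $|y-x|\sim 2^{m-j}$ and, for each $m$, sets $\omega_m(x,s)\equiv 2^{-m\sigma}\sup\{\omega(y,s):\,|y-x|<\sqrt n\,2^{m+2}s\}$ with a suitable $\sigma>0$; the inequality one can actually prove is of the form
\begin{equation*}
\bigl(\wz{\vz}^{\,*}_{j,a}f\bigr)(x)\,[\omega_m(x,2^{-j})]^{-1}\ls 2^{m\sigma'}\,\mathrm{HL}\Bigl(|\wz{\vz}_j\ast f|\,[\omega(\cdot,2^{-j})]^{-1}\Bigr)(x),
\end{equation*}
that is, the \emph{modified, enlarged} weight must appear in the target norm --- where its admissibility (a constant multiple of $\omega_m$ still satisfies \eqref{1.1}) is precisely what lets the resulting bound dominate the infimum defining $\|S_\varphi f\|_{\dsah}$ --- while the \emph{original} $\omega$, which nearly attains $\|f\|_{\dah}$, stays inside the maximal operator so that the $L^p$ (or Fefferman--Stein) inequality returns $\|f\|_{\dah}$. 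With the roles reversed as you wrote them, the required comparison $\wz\omega(y,2^{-j})\ls\omega(x,2^{-j})$ for $|x-y|\sim 2^{m-j}$ fails in general. The factors $2^{m\sigma'}$ are then summed over $m$ using the quasi-triangle inequality \eqref{2.1} (this, not the finite sum over $|j'-j|\le1$, is where the quasi-norm constant $\rho$ and the largeness of $a$, resp. $\lambda$, enter).

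Second, Lemma \ref{l2.4} is not the tool that justifies the weight modification: it is a quasi-additivity statement for Hausdorff capacities of widely separated sets, used only for the sharpness of the embedding (Lemma \ref{l2.5} and Proposition \ref{p2.3}). What the passage $\omega\mapsto\omega_m$ requires is the aperture-dilation estimate $H^d(\{N_\beta\omega>\lz\})\ls\beta^d H^d(\{N\omega>\lz\})$, which in this paper is the content of Lemma \ref{l3.2} and Corollary \ref{c3.1} and is invoked in the proof of Lemma \ref{l2.2} through the argument of \cite[Lemma 5.2]{yy1}. With these two repairs your argument matches the Frazier--Jawerth scheme to which the paper appeals.
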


To prove Theorem \ref{t2.1}, we need some technical lemmas. For a
sequence $t=\{t_Q\}_{Q \in \cD(\rn)}$, $Q \in \cD(\rn)$,
$r\in(0,\fz]$ and
$\lambda\in(0,\fz)$, define
\[
(t^*_{r,\lambda})_Q \equiv \left( \sum_{P \in \cD_{j_Q}(\rn)}
\frac{|t_P|^r}{(1+[\ell(P)]^{-1}|x_P-x_Q|)^\lambda} \right)^\frac1r
\]
and $t^*_{r,\lambda}\equiv\{(t^*_{r,\lambda})_Q\}_{Q \in \cD(\rn)}$.
For any $p,q\in(0,\fz]$, let $p\wedge q\equiv \min\{p,q\}$. The
following estimate is crucial in that this corresponds to the
maximal operator estimate.

\begin{lemma}\label{l2.2} Let $s,\,p,\,q,\,\tau$ be as in
Theorem \ref{t2.1} and $\lambda \in (n,\fz)$ be sufficiently large.
Then there exists a positive constant $C$ such that for all
$t\in\dsah$, $\| t \|_{\dsah}\le \| t^*_{p \wedge q,\lambda}
\|_{\dsah}\le C\| t \|_{\dsah}$.
\end{lemma}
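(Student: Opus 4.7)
The plan is to obtain the trivial lower bound directly from the definition and to prove the upper bound by combining a standard pointwise maximal estimate with a weighted vector-valued maximal inequality; the principal obstacle will be reconciling the Hardy-Littlewood maximal operator $M$ with the infimum weight $\omega$ appearing in the definition of $\|\cdot\|_{\dsah}$. The lower bound is immediate: taking $P=Q$ in the defining sum of $(t^*_{p\wedge q,\lambda})_Q$ gives $|t_Q|\le (t^*_{p\wedge q,\lambda})_Q$ for every $Q\in\cD(\rn)$, so $\sum_{Q\in\cD_j(\rn)}|t_Q|\widetilde{\chi}_Q\le \sum_{Q\in\cD_j(\rn)}(t^*_{p\wedge q,\lambda})_Q\widetilde{\chi}_Q$ pointwise; every $\omega$ admissible in Definition \ref{d2.1} for $t^*_{p\wedge q,\lambda}$ is then also admissible for $t$ (the vanishing restriction transfers), which yields $\|t\|_{\dsah}\le \|t^*_{p\wedge q,\lambda}\|_{\dsah}$.

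For the upper bound, we would first establish the familiar pointwise bound: for every $r\in(0,\fz)$ and $\mu\in(n,\fz)$,
$$\sum_{Q\in\cD_j(\rn)}(t^*_{r,\mu})_Q\widetilde{\chi}_Q(x)\le C\bigl[M(G_j^r)(x)\bigr]^{1/r},\quad G_j\equiv\sum_{P\in\cD_j(\rn)}|t_P|\widetilde{\chi}_P,$$
proved by decomposing the sum in $P$ into annular shells $\{P\in\cD_j(\rn):2^{k-1}\le 1+2^j|x_P-x_Q|<2^k\}$, comparing the contribution of each shell to the average of $G_j^r$ over an appropriate dilate of $Q$, and summing in $k$ using $\mu>n$. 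By the embedding $\ell^r\hookrightarrow\ell^{p\wedge q}$, for any $r\in(0,p\wedge q)$ with $\mu\equiv\lambda r/(p\wedge q)$ one has the pointwise domination $(t^*_{p\wedge q,\lambda})_Q\le (t^*_{r,\mu})_Q$, so choosing $\lambda$ large enough that $\mu>n$ we are reduced to bounding $\|t^*_{r,\mu}\|_{\dsah}$ for some fixed $r\in(0,p\wedge q)$.

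The main step is to propagate the pointwise bound through the norm, and this is where the chief obstacle appears. Given any admissible $\omega$ for $\|t\|_{\dsah}$ that almost attains the infimum, naive substitution fails because $M$ does not commute with $\omega(\cdot,2^{-j})^{-1}$. We therefore enlarge $\omega$ to $\omega'(y,t)\equiv\sup\{\omega(z,t):|z-y|<Ct\}$ with a fixed geometric constant $C$. Then $\omega'\ge \omega$ pointwise (so the vanishing restriction persists), $\omega'$ is slowly varying in the sense $\omega'(y_1,t)\sim\omega'(y_2,t)$ whenever $|y_1-y_2|\le ct$, and $N\omega'$ coincides with a wider-aperture nontangential maximal function of $\omega$. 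The equivalence of different-aperture nontangential maximal functions in the Choquet integral $\int_\rn(\cdot)^{(p\vee q)'}\,dH^{n\tau(p\vee q)'}$ — the type of Hausdorff-capacity statement foreshadowed by Lemma \ref{l2.4} — shows that $\omega'$ still satisfies \eqref{1.1} up to a uniform multiplicative constant that can be rescaled away, while the slow-variation property yields the pointwise inequality $\omega'(x,2^{-j})^{-r}M(G_j^r)(x)\le CM(G_j^r\omega(\cdot,2^{-j})^{-r})(x)$, moving the weight inside $M$. Once this is in place, for $\dsbh$ the scalar $L^{p/r}$-boundedness of $M$ (available since $r<p$) completes the argument after summing against $2^{jsq}$ in $\ell^q$ and passing to the infimum over $\omega$; for $\dsfh$ one invokes instead the Fefferman-Stein vector-valued maximal inequality on $L^{p/r}(\ell^{q/r})$, whose hypotheses $p/r,q/r>1$ are guaranteed by the choice $r<p\wedge q$.
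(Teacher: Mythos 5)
Your lower bound and the reduction from exponent $p\wedge q$ to a smaller exponent $r$ via monotonicity of $\ell^r$-norms are fine, and they match what the paper does. The genuine gap is the step where you ``move the weight inside $M$'': the claimed pointwise inequality
$[\omega'(x,2^{-j})]^{-r}\,\mathrm{HL}(G_j^r)(x)\le C\,\mathrm{HL}\bigl(G_j^r[\omega(\cdot,2^{-j})]^{-r}\bigr)(x)$
with $\omega'$ obtained from $\omega$ by a \emph{fixed}-aperture enlargement $\omega'(y,t)=\sup\{\omega(z,t):|z-y|<Ct\}$ is false in general. Unwinding it, you would need $\omega(y,2^{-j})\ls\omega'(x,2^{-j})$ for every $y$ in every ball $B\ni x$ over which the maximal function averages; but $B$ may have radius $R\gg 2^{-j}$, while $\omega'(x,2^{-j})$ only sees $\omega(\cdot,2^{-j})$ on a ball of radius $C2^{-j}$ about $x$. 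The only a priori control on $\omega$ is the Choquet-integral constraint \eqref{1.1} (which gives $\omega(y,t)\ls t^{-n\tau}$), not any slowly-varying or $A_\infty$-type structure, so $\omega(\cdot,2^{-j})$ can be large far from $x$ and small near $x$. Slow variation of $\omega'$ at scale $2^{-j}$ does not help at scales $R\gg2^{-j}$.

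The paper's proof keeps the annular decomposition alive through the weight-handling step precisely to repair this: for the shell $A_m(Q)$ of cubes at distance $\sim 2^{m-j}$ it uses an $m$-dependent enlargement $\omega_m(x,s)=2^{-mn(\lfloor(p\vee q)'\rfloor+2)}\sup\{\omega(y,s):|y-x|<\sqrt{n}\,2^{m+2}s\}$, whose aperture grows with $m$ so that $\omega(y,2^{-j})\le 2^{mn(\lfloor(p\vee q)'\rfloor+2)}\omega_m(x,2^{-j})$ for $y$ in the $m$-th shell, and whose normalizing prefactor guarantees that $\omega_m$ still satisfies \eqref{1.1} uniformly in $m$ (this is where the Hausdorff-capacity aperture lemma enters, not in justifying a single fixed enlargement). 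The price is a factor $2^{mn(1/a+\lfloor(p\vee q)'\rfloor+2)r}$ per shell, which must be beaten by the kernel decay $2^{-m\lambda}$; and since $\|\cdot\|_{\dsah}$ is only a quasi-norm, summing the shells costs a further $2^{\rho m}$ via \eqref{2.1}. This is why the paper requires $\lambda>(p\wedge q)[n(1/a+\lfloor(p\vee q)'\rfloor+2)+\rho]$, a much stronger largeness condition than your $\lambda r/(p\wedge q)>n$ --- the weakness of your condition on $\lambda$ is a symptom of the missing shell-by-shell bookkeeping. To fix your argument, replace the single enlargement $\omega'$ by the family $\{\omega_m\}_{m\ge0}$, prove the maximal-function bound shell by shell, and sum in $m$ inside the quasi-norm.
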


\begin{proof}
The inequality $\| t \|_{\dsah}\le \| t^*_{p \wedge
q,\lambda}\|_{\dsah}$ being trivial, we only need to concentrate on $\|
t^*_{p \wedge q,\lambda}\|_{\dsah}\lesssim \| t \|_{\dsah}$.
Also, by similarity, we only consider the spaces $\dsbh$.

Let $t=\{t_Q\}_{Q \in \cD(\rn)} \in \dsbh$. We choose a Borel
function $\omega$ as in the proof of Lemma \ref{l2.1}. For all cubes
$Q \in \cD_j(\rn)$ and $m\in\nn$, we set $A_0(Q)\equiv \{P \in
\cD_j(\rn) \, : \, 2^j|x_P-x_Q| \le 1 \}$ and $ A_m(Q)\equiv \{P \in
\cD_j(\rn) \, : \, 2^{m-1} < 2^j|x_P-x_Q| \le 2^m \}. $ The triangle
inequality that $|x-y| \le |x-x_Q|+|x_Q-x_P|+|x_P-y|$ gives us that
$|x-y| \le 3\sqrt{n}2^{m-j}$ provided $x \in Q$, $y \in P$ and $P
\in A_m(Q)$.

For all $m \in \zz_+$ and $(x,s)\in\R^{n+1}_+$, we set
\[
\omega_m(x,s)\equiv2^{-mn(\lfloor(p \vee q)' \rfloor+2)} \sup\{
\omega(y,s) \, : \, y\in\rn,\ |y-x|<\sqrt{n}2^{m+2}s \},
\]
where and in what follows, $\lfloor s\rfloor$ denotes the
\textit{maximal integer no more than $s$}. By the argument in
\cite[Lemma 5.2]{yy1}, we know that $\omega_m$ still satisfies
(\ref{1.1}) modulo multiplicative constants independent of $m$. Also
it follows from the definition of $\omega_m$ that for all $x\in Q$,
$y\in P$ with $P\in A_m(Q)$, $\omega(y,2^{-j}) \le 2^{mn(\lfloor(p
\vee q)' \rfloor+2)} \omega_m(x,2^{-j}).$ For all $r\in(0,\fz)$ and
$a\in (0,r)$, using this estimate and the monotonicity
of $l^{a/r}$, we obtain that for all $x \in Q$,
\begin{align*}
\lefteqn{ \sum_{P \in A_m(Q)}
\frac{|t_P|^r}{(1+2^j|x_Q-x_P|)^\lambda}
[\omega_m(x,2^{-j})]^{-r}}\\
&\le\left\{\sum_{P \in A_m(Q)}
\frac{|t_P|^a}{(1+2^j|x_Q-x_P|)^{\lambda a/r}}
[\omega_m(x,2^{-j})]^{-a}\right\}^{r/a}\\
&\lesssim
2^{-m\lambda+jnr/a}\left\{
\int_{\R^n}\sum_{P \in A_m(Q)}
|t_P|^a\chi_P(y)[\omega_m(x,2^{-j})]^{-a}\,dy\right\}^{r/a}\\
&\lesssim2^{-m\lambda+nr\{j/a+m(\lfloor(p \vee q)' \rfloor+2)\}}
\left\{\int_{\R^n}\sum_{P \in A_m(Q)}
|t_P|^a\chi_P(y)[\omega(y,2^{-j})]^{-a}\,dy\right\}^{r/a}\\
&\lesssim 2^{-m\lambda+mnr(1/a+\lfloor(p \vee q)' \rfloor+2)}
\left\{\mathrm{HL} \left( \sum_{P \in
A_m(Q)}|t_P|^a\chi_P[\omega(\cdot,2^{-j})]^{-a} \right)(x)\right\}^{r/a},
\end{align*}
where $\mathrm{HL}$ denotes the Hardy-Littlewood maximal operator on
$\rn$.

For all $m\in\zz_+$, set $ t^{*,m}_{r,\lambda}\equiv
\{(t^{*,m}_{r,\lambda})_Q\}_{Q \in \cD(\rn)}$ with
$$(t^{*,m}_{r,\lambda})_Q \equiv \left( \sum_{P \in A_m(Q)}
\frac{|t_P|^r}{(1+[\ell(P)]^{-1}|x_P-x_Q|)^\lambda} \right)^\frac1r.$$
In what follows, choose $a\in (0,p\wedge q)$ and
$\lambda>(p\wedge q)[n(1/a+\lfloor (p \vee q)' \rfloor+2)+\rho]$,
where $\rho$ is a nonnegative constant
as in \eqref{2.1}. By \eqref{2.1}, the previous pointwise estimate
and the $L^{\frac pa}(\rn)$-boundedness of $\mathrm{HL}$,
we obtain
\begin{eqnarray*}
&&\| t^*_{p \wedge q,\lambda} \|_{\dsbh}\\
&&\hs\lesssim
\sum_{m=0}^\infty 2^{\rho m} \| t^{*,m}_{p \wedge q,\lambda}
\|_{\dsbh}\\
&&\hs\lesssim \sum_{m=0}^\infty 2^{\rho m} \left\{ \sum_{j \in \Z}
2^{jsq} \left[ \int_{\R^n}\sum_{Q \in \cD_j(\rn)}
\left( \sum_{P \in A_m(Q)} \frac{|t_P|^{p
\wedge q}}{(1+[\ell(P)]^{-1}|x_P-x_Q|)^\lambda} \right)^\frac{p}{p
\wedge q}\r.\r. \\
&&\hs\hs\times\l.\l.
\frac{\widetilde{\chi}_Q(x)^p}{[\omega_m(x,2^{-j})]^p}\,dx
\right]^\frac{q}{p}
\right\}^\frac{1}{q}\\
&&\hs\lesssim \sum_{m=0}^\infty 2^{-\frac{m}{p\wedge
q}\{\lambda-(p\wedge q)[n(1/a+\lfloor (p \vee q)' \rfloor+2)+\rho]\}}\\
&&\hs\hs\times \left[ \sum_{j \in \Z} 2^{jsq} \left\{ \int_{\R^n}
\l[\mathrm{HL} \left( \sum_{P \in \cD_j(\rn)}
\frac{(|t_P|\widetilde{\chi}_P)^a}
{[\omega(\cdot,2^{-j})]^a} \right)(x)\r]^\frac{p}a
\,dx \right\}^\frac{q}{p} \right]^\frac{1}{q}
\lesssim\|t\|_{\dsbh},
\end{eqnarray*}
which completes the proof of Lemma \ref{l2.2}.
\end{proof}

For any $f\in\cS'_\fz(\rn)$, $\gamma\in\zz_+$ and
$Q\in\mathcal{D}_j(\rn)$, set $\sup_Q(f)\equiv|Q|^{1/2}\sup_{y\in
Q}|\widetilde\varphi_j\ast f(y)|$ and $${\rm
inf}_{Q,\,\gamma}(f)\equiv|Q|^{1/2}\max\l\{\inf_{y\in \widetilde
{Q}}|\widetilde\varphi_j\ast f(y)|:\ \ell(\widetilde
Q)=2^{-\gamma}\ell(Q),\,\widetilde{Q}\subset Q\r\}.$$ Let
$\sup(f)\equiv\{\sup_Q(f)\}_{Q\in\cD(\rn)}$ and
$\inf_\gamma(f)\equiv\{\inf_{Q,\,\gamma}(f)\}_{Q\in\cD(\rn)}$. We
have the following conclusion, whose proof is similar to \cite[Lemma
2.5]{fj} and we omit the details.

\begin{lemma}\label{l2.3}
Let $s,\,p,\,q,\,\tau$ be as in Theorem \ref{t2.1} and
$\gamma\in\zz_+$ be sufficiently large. Then there exists a constant
$C\in[1,\,\fz)$ such that for all $f\in\dah$,
$$C^{-1}\|{\rm
inf}_{\gamma}(f)\|_{\dsah}\le\|f\|_{\dah}\le\|\sup(f)\|_{\dsah}\le
C\|{\rm inf}_{\gamma}(f)\|_{\dsah}.$$
\end{lemma}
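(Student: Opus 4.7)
The plan is to establish the three inequalities in the stated order, adapting the Frazier--Jawerth Plancherel--Polya argument of \cite[Lemma 2.5]{fj} to the Hausdorff setting. The crucial ingredients are the frequency localization of $\widetilde\varphi_j\ast f$ in $\{|\xi|\sim 2^j\}$, the maximal-type estimate of Lemma \ref{l2.2}, and the enlarged-weight construction used in its proof to handle the capacity condition \eqref{1.1}. I expect the principal obstacle to be the combination of the submean bound with the possible irregularity of $\omega$ on scale $2^{-j}$: one must systematically pass to a dyadic enlargement of $\omega$ before invoking any maximal-function estimate, and verify that \eqref{1.1} survives this enlargement.

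The middle inequality $\|f\|_{\dah}\le\|\sup(f)\|_{\dsah}$ is immediate. For each $j\in\zz$ and $y\in\rn$ there is a unique $Q(j,y)\in\cD_j(\rn)$ containing $y$, and by the definitions
\[
|\widetilde\varphi_j\ast f(y)|\le |Q(j,y)|^{-1/2}\sup_{Q(j,y)}(f)=\sum_{Q\in\cD_j(\rn)}\sup_Q(f)\,\widetilde\chi_Q(y).
\]
Dividing by an admissible $\omega$, taking the corresponding $L^p$- or $L^p(\ell^q)$-norm, and using that $\|\cdot\|_{\dah}$ may be computed with $\widetilde\varphi$ in place of $\varphi$, yields the claim.

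For the right inequality $\|\sup(f)\|_{\dsah}\lesssim\|\inf_\gamma(f)\|_{\dsah}$, the band-limited nature of $\widetilde\varphi_j\ast f$ yields the submean estimate (exactly as in \cite[Lemma 2.5]{fj}): for $\gamma$ sufficiently large, any $r\in(0,p\wedge q)$ and any $\lambda r>n$,
\[
\sup_{y\in Q}|\widetilde\varphi_j\ast f(y)|^r\lesssim \sum_{\widetilde Q\in\cD_{j+\gamma}(\rn)}\frac{\inf_{z\in\widetilde Q}|\widetilde\varphi_j\ast f(z)|^r}{(1+2^j|x_Q-x_{\widetilde Q}|)^{\lambda r}}.
\]
Grouping the sub-cubes $\widetilde Q$ by their parents $P\in\cD_j(\rn)$ and keeping, for each $P$, only the one which realizes $\inf_{P,\gamma}(f)$ (absorbing the discarded $2^{\gamma n}-1$ sub-cubes into the constant), this simplifies to
\[
[\sup_Q(f)]^r\lesssim\sum_{P\in\cD_j(\rn)}\frac{[\inf_{P,\gamma}(f)]^r}{(1+2^j|x_Q-x_P|)^{\lambda r}},
\]
that is, $\sup_Q(f)\le C\,[(\inf_\gamma(f))^*_{r,\lambda}]_Q$. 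Lemma \ref{l2.2} applied to the sequence $\inf_\gamma(f)$ then concludes the argument provided $\lambda$ is chosen large enough.

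The left inequality $\|\inf_\gamma(f)\|_{\dsah}\lesssim\|f\|_{\dah}$ is where the Hausdorff setting requires the weight-modification trick from the proof of Lemma \ref{l2.2}. Pick an $\omega$ almost attaining the infimum in $\|f\|_{\dah}$ and form
\[
\widetilde\omega(x,2^{-j})\equiv\sup\l\{\omega(y,2^{-j}):\,y\in\rn,\,|y-x|\le\sqrt n\,2^{-j}\r\},
\]
which still obeys \eqref{1.1} up to a constant independent of $j$, by the same argument used to control $\omega_m$ in the proof of Lemma \ref{l2.2}. For each $Q\in\cD_j(\rn)$, let $\widetilde Q_Q\subset Q$ be the optimal sub-cube of side $2^{-j-\gamma}$; then for every $y\in Q$,
\[
\inf_{Q,\gamma}(f)\,\widetilde\chi_Q(y)=\chi_Q(y)\inf_{z\in\widetilde Q_Q}|\widetilde\varphi_j\ast f(z)|\lesssim \chi_Q(y)\,(\widetilde\varphi^*_j f)(y),
\]
where $\widetilde\varphi^*_j f$ denotes the Peetre maximal function at scale $j$. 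Since $|z-y|\le\sqrt n\,2^{-j}$ whenever $y\in Q$ and $z\in\widetilde Q_Q$, one may replace $\omega(z,2^{-j})$ by $\widetilde\omega(y,2^{-j})$ at no cost; the standard pointwise bound $\widetilde\varphi^*_j f\lesssim[\mathrm{HL}(|\widetilde\varphi_j\ast f|^r)]^{1/r}$ for $\lambda r>n$ and $r\in(0,p\wedge q)$ then reduces matters to the (vector-valued) Hardy--Littlewood inequality with the weight $\widetilde\omega^{-1}$, which is precisely the form already treated inside the proof of Lemma \ref{l2.2}. Once the weight-shift is in place, the remaining calculations parallel the classical Besov/Triebel--Lizorkin proof.
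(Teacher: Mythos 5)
Your proposal is correct and follows essentially the route the paper intends (the proof is omitted there with a reference to the argument for Lemma 2.5 of \cite{fj}): the Plancherel--Polya submean estimate reduces the inequality $\|\sup(f)\|_{\dsah}\ls\|\inf_\gamma(f)\|_{\dsah}$ to Lemma \ref{l2.2}, and the inequality $\|\inf_\gamma(f)\|_{\dsah}\ls\|f\|_{\dah}$ is handled by the same weight-enlargement device used in the proof of that lemma. Two small adjustments are advisable: take $r=p\wedge q$ in the submean estimate, since Lemma \ref{l2.2} is stated only for the exponent $p\wedge q$; and in the last step the Peetre maximal function is an unnecessary detour --- bounding $\inf_{z\in\widetilde Q_Q}\{|\widetilde\varphi_j\ast f(z)|[\omega(z,2^{-j})]^{-1}\}$ directly by the $L^a$-average of $|\widetilde\varphi_j\ast f|^a[\omega(\cdot,2^{-j})]^{-a}$ over $Q$ keeps all points within distance $\sqrt{n}\,2^{-j}$ of $y$, which is exactly where the replacement of $\omega(z,2^{-j})$ by $\widetilde\omega(y,2^{-j})$ is legitimate.
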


With the Calder\'on reproducing formula \eqref{2.2}, Lemmas
\ref{l2.2} and \ref{l2.3}, the proof of Theorem \ref{t2.1} follows
the method pioneered by Frazier and Jawerth (see
\cite[pp.\,50-51]{fj}); see also the proof of \cite[Theorem
3.5]{bh}. We omit the details.

Recall that the corresponding sequence spaces $\dsat$ of $\dat$ in
\cite[Definition 3.1]{yy2} were defined as follows.

\begin{definition}\label{d2.3}
Let $s\in\rr$, $q\in(0,\fz]$ and $\tau\in(0,\fz)$. {\it The
sequence space $\dsat$} is defined to be the set of all
$t=\{t_Q\}_{Q\in\cD(\rn)}\subset\mathbb{C}$ such that
$\|t\|_{\dsat}<\fz$, where if $\dsat\equiv\dsbt$ for $p\in (0,\fz]$, then
\begin{equation*}
\|t\|_{\dsbt}\equiv\sup_{P\in\cD(\rn)}\dfrac{1}
{|P|^{\tau}}\l\{\sum^{\fz}_{j=j_P}2^{jsq}\l[\int_P
\l(\sum_{l(Q)=2^{-j}}|t_Q|\widetilde{\chi}_Q(x)\r)^p\,
dx\r]^{q/p}\r\}^{1/q}
\end{equation*}
and if $\dsat\equiv\dsft$ for $p\in(0,\fz)$, then
\begin{equation*}
\|t\|_{\dsft}\equiv\sup_{P\in\cD(\rn)}\dfrac{1}
{|P|^{\tau}}\l\{\int_P\l[\sum_{Q\subset
P}\l(|Q|^{-s/n}|t_Q|\widetilde{\chi}_Q(x)\r)^q\r]^{p/q}\,dx\r\}^{1/p}.
\end{equation*}
\end{definition}

We now establish the duality between $\dsah$ and
$\dot{a}_{p',q'}^{-s,\tau}(\rn)$, which is used in
Sections 3 and 4 below. In what follows, for any quasi-Banach spaces
${\mathcal B}_1$ and ${\mathcal B}_2$, the symbol ${\mathcal
B}_1\hookrightarrow{\mathcal B}_1$ means that there exists a
positive constant $C$ such that for all $f\in{\mathcal B}_1$, then
$f\in {\mathcal B}_2$ and $\|f\|_{{\mathcal B}_2}\le
C\|f\|_{{\mathcal B}_1}$.

\begin{proposition}\label{p2.1}
Let $s,\,p,\,q,\,\tau$ be as in Theorem \ref{t2.1}. Then $(\dsah)^*=
\dot{a}_{p',q'}^{-s,\tau}(\rn)$ in the following sense.

If $t=\{t_Q\}_{Q\in\cD(\rn)}\in \dot{a}_{p',q'}^{-s,\tau}(\rn)$, then
the map
$$\lambda=\{\lambda_Q\}_{Q\in\cD(\rn)}\mapsto \langle
\lambda, t\rangle\equiv \sum_{Q\in\cD(\rn)}\lambda_Q\overline{t}_Q$$
defines a continuous linear functional on $\dsah$ with operator norm
no more than a constant multiple of $\|t\|_{\dot{a}_{p',q'}^{-s,\tau}(\rn)}$.

Conversely, every $L\in (\dsah)^*$ is of this form for a certain
$t\in \dot{a}_{p',q'}^{-s,\tau}(\rn)$ and
$\|t\|_{\dot{a}_{p',q'}^{-s,\tau}(\rn)}$ is no more than a constant
multiple of the operator norm of $L$.
\end{proposition}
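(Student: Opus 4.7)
The plan is to establish the two directions separately. For the forward direction, given $\lambda=\{\lambda_Q\}\in\dsah$ and $t=\{t_Q\}\in\dot{a}_{p',q'}^{-s,\tau}(\rn)$, I would choose a nonnegative Borel measurable function $\omega$ satisfying \eqref{1.1} that almost attains the infimum in the definition of $\|\lambda\|_{\dsah}$. Using the identity $\int_Q[\widetilde{\chi}_Q(x)]^2\,dx=1$ together with the disjointness of $\{\supp\widetilde{\chi}_Q\}_{Q\in\cD_j(\rn)}$, one writes
\[ \sum_{Q\in\cD(\rn)}\lambda_Q\overline{t_Q}=\sum_{j\in\zz}\int_{\rn}\left(\sum_{Q\in\cD_j(\rn)}\lambda_Q\widetilde{\chi}_Q(x)\frac{2^{js}}{\omega(x,2^{-j})}\right)\left(\sum_{Q\in\cD_j(\rn)}\overline{t_Q}\,\widetilde{\chi}_Q(x)\frac{\omega(x,2^{-j})}{2^{js}}\right)dx, \]
the vanishing restriction on $\omega$ ensuring that no denominator is zero when the corresponding numerator is nonzero. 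Applying H\"older's inequality in the spatial variable and in $j$ (in the order appropriate to the Besov or Triebel--Lizorkin case), the first factor is dominated by $2\|\lambda\|_{\dsah}$, while the remaining factor must be controlled by a constant multiple of $\|t\|_{\dot{a}_{p',q'}^{-s,\tau}(\rn)}$. This key estimate is a dual-side reformulation of the geometrical Hausdorff-capacity characterization encapsulated in Lemma \ref{l2.4}; once available, it yields $|\sum_Q\lambda_Q\overline{t_Q}|\ls\|\lambda\|_{\dsah}\|t\|_{\dot{a}_{p',q'}^{-s,\tau}(\rn)}$, which gives the first direction.

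For the converse direction, given $L\in(\dsah)^*$, I would set $t_Q\equiv\overline{L(e_Q)}$, where $e_Q$ is the sequence with a single nonzero entry equal to $1$ at index $Q$. To bound $\|t\|_{\dot{a}_{p',q'}^{-s,\tau}(\rn)}$ by $\|L\|$, fix $P\in\cD(\rn)$ and consider a test sequence $c=\{c_Q\}$ vanishing off $\{Q\subset P\}$. With $T(P)\equiv P\times(0,\ell(P)]$ the Carleson tent over $P$, the choice $\omega(x,2^{-j})\equiv C|P|^{-\tau}\chi_{T(P)}(x,2^{-j})$ satisfies \eqref{1.1} (as follows from a direct computation with $H^d$) and is thus admissible in Definition \ref{d2.1}, which yields
\[ \left\|\sum_{Q\subset P}c_Q e_Q\right\|_{\dsah}\ls|P|^\tau\,\|c\|_{\dot{a}_{p,q}^s(\rn)}. \]
Consequently, $\left|\sum_{Q\subset P}c_Q\overline{t_Q}\right|=\left|L\left(\sum_{Q\subset P}c_Q e_Q\right)\right|\ls\|L\||P|^\tau\|c\|_{\dot{a}_{p,q}^s(\rn)}$. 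Taking the supremum over $c$ in the unit ball of $\dot{a}_{p,q}^s(\rn)$ supported in $\{Q\subset P\}$, invoking the classical sequence-space duality $(\dot{a}_{p,q}^s(\rn))^*=\dot{a}_{p',q'}^{-s}(\rn)$, and then taking $\sup_{P\in\cD(\rn)}$ delivers $\|t\|_{\dot{a}_{p',q'}^{-s,\tau}(\rn)}\ls\|L\|$. Finally, to verify that $L(\lambda)=\sum_Q\lambda_Q\overline{t_Q}$ for every $\lambda\in\dsah$, I would truncate $\lambda$ to a finitely supported $\lambda^{(N)}$ and apply the dominated convergence theorem inside a near-optimal $\omega$ for $\lambda$ to obtain $\|\lambda-\lambda^{(N)}\|_{\dsah}\to 0$; the series on the right then converges by the forward direction.

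The principal obstacle is the key estimate in the forward direction: controlling the weighted norm of $\{t_Q\omega(\cdot,2^{-j})\}$ by $\|t\|_{\dot{a}_{p',q'}^{-s,\tau}(\rn)}$ using only the Choquet-integral constraint \eqref{1.1} on $\omega$. Geometrically this requires reconciling an essentially arbitrary ball cover controlling the Hausdorff capacity with the dyadic structure underlying the Morrey-type norm; the technique, inherited from the earlier works of Yang--Yuan, is to pass to the equivalent dyadic Hausdorff capacity $\widetilde{H}^d$, select maximal disjoint dyadic cubes from the level sets $\{N\omega>2^l\}_{l\in\zz}$, and redistribute mass across these cubes in accordance with the Morrey supremum. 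A secondary subtlety is that $\dsah$ is only a quasi-normed space with the $\rho$-homogeneous quasi-triangle inequality \eqref{2.1}; however, since the argument uses only the linear-functional algebra on $\dsah$ and not Hahn--Banach-type extensions, this poses no essential difficulty.
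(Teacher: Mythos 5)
Your converse direction is essentially the paper's argument: fixing $P\in\cD(\rn)$, taking $\omega\sim|P|^{-\tau}$ on the tent over $P$ (which is admissible for \eqref{1.1} since $H^{n\tau(p\vee q)'}$ of a dilate of $P$ is $\sim\ell(P)^{n\tau(p\vee q)'}$), deducing $\|\sum_{Q\subset P}c_Qe_Q\|_{\dsah}\ls|P|^\tau\|c\|_{\dot a^s_{p,q}(\rn)}$, and then invoking the localized classical sequence-space duality is exactly what the paper does (the paper phrases the last step as the duality of the weighted spaces $l^q_P(l^p(X_j,d\mu))$, which is the same computation). That half is sound.

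The forward direction, however, has a genuine gap. After the H\"older factorization, everything hinges on the uniform estimate
\begin{equation*}
\left\{\sum_{j\in\zz}2^{-jsq'}\left\|\sum_{Q\in\cD_j(\rn)}|t_Q|\widetilde{\chi}_Q\,
\omega(\cdot,2^{-j})\right\|_{L^{p'}(\rn)}^{q'}\right\}^{1/q'}
\ls\|t\|_{\dot{b}_{p',q'}^{-s,\tau}(\rn)}
\end{equation*}
over all $\omega$ satisfying \eqref{1.1} (and its Triebel--Lizorkin analogue). This is the entire analytic content of the proposition, and you do not prove it: you assert that it is ``a dual-side reformulation of the geometrical Hausdorff-capacity characterization encapsulated in Lemma \ref{l2.4},'' but Lemma \ref{l2.4} is an unrelated statement about widely separated sets $E_j\subset B((A_j,0,\cdots,0),n)$ used only for the sharpness example in Proposition \ref{p2.3}; it gives no control of a Choquet-normalized weight $\omega$ against a Morrey-type supremum over dyadic cubes. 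The sketch in your final paragraph (dyadic Hausdorff capacity, maximal dyadic cubes in the level sets of $N\omega$, ``redistribute mass'') is the right circle of ideas but is too vague to check and is never executed. The paper closes this gap by quoting the tent-space duality $(B\dot{T}^{s,\tau}_{p,q}(\R^{n+1}_\zz))^*=B\dot{W}^{-s,\tau}_{p',q'}(\R^{n+1}_\zz)$ from \cite[Theorem 5.1]{yy2}: one sets $F(\cdot,2^{-j})\equiv\sum_{Q\in\cD_j(\rn)}|\lambda_Q|\widetilde{\chi}_Q$ and $G(\cdot,2^{-j})\equiv\sum_{P\in\cD_j(\rn)}|t_P|\widetilde{\chi}_P$, identifies $\|F\|_{B\dot{T}^{s,\tau}_{p,q}}\sim\|\lambda\|_{\dsbh}$ and $\|G\|_{B\dot{W}^{-s,\tau}_{p',q'}}\sim\|t\|_{\dot{b}_{p',q'}^{-s,\tau}(\rn)}$, and bounds the pairing by $\sum_j\int_\rn FG\,dx$. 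You must either cite that result or supply a complete proof of the displayed estimate; as written, the first half of the proposition is not established.
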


\begin{proof}
We only consider the spaces $\dsbh$ because the assertion for
$\dsfh$ can be proved similarly. Below we write $\rr^{n+1}_\zz\equiv
\{(x,a) \in \rr^{n+1}_+ \, : \, \log_2 a \in \Z \}$.

For $t=\{t_Q\}_{Q\in\cD(\rn)}\in \dot{b}_{p',q'}^{-s,\tau}(\rn)$ and
$\lambda=\{\lambda_Q\}_{Q\in\cD(\rn)}\in\dsbh$, let $F$ and $G$ be functions
on $\rr^{n+1}_\zz$ defined by setting, for all $x\in\rn$ and $j\in\zz$,
$F(x, 2^{-j})\equiv
\sum_{Q\in\cD_j(\rn)}|\lambda_Q|\widetilde{\chi}_Q$ and
$G(x, 2^{-j})\equiv \sum_{P\in\cD_j(\rn)}|t_P|\widetilde{\chi}_P$. Since
$$\|F\|_{B\dot{T}^{s,\tau}_{p,q}(\R^{n+1}_\zz)}\sim
\|\lambda\|_{\dsbh}$$
and
$\|G\|_{B\dot{W}^{-s,\tau}_{p',q'}(\R^{n+1}_\zz)}\sim
\|t\|_{\dot{b}_{p',q'}^{-s,\tau}(\rn)}$, where
$B\dot{T}^{s,\tau}_{p,q}(\R^{n+1}_\zz)$ and
$B\dot{W}^{-s,\tau}_{p',q'}(\R^{n+1}_\zz)$ are tent spaces
introduced in \cite[Definition 5.2]{yy2}, by the duality of
tent spaces obtained in \cite[Theorem 5.1]{yy2} that
$(B\dot{T}^{s,\tau}_{p,q}(\R^{n+1}_\zz))^*=
B\dot{W}_{p',q'}^{-s,\tau}(\R^{n+1}_\zz)$, we have
\begin{eqnarray*}
\l|\sum_{Q\in\cD(\rn)}\lambda_Q\overline{t}_Q\r| &&\le
\sum_{j\in\zz}\int_\rn\sum_{Q\in\cD_j(\rn)}\sum_{P\in\cD_j(\rn)}
|\lambda_Q|\widetilde{\chi}_Q(x)|t_P|\widetilde{\chi}_P(x)\,dx\\
&&=\sum_{j\in\zz}\int_\rn F(x,2^{-j})G(x,2^{-j})\,dx\ls
\|F\|_{B\dot{T}^{s,\tau}_{p,q}(\R^{n+1}_\zz)}
\|G\|_{B\dot{W}_{p',q'}^{-s,\tau}(\R^{n+1}_\zz)}\\
&&\sim
\l\|\lambda\r\|_{\dsbh}\l\|t\r\|_{\dot{b}_{p',q'}^{-s,\tau}(\rn)},
\end{eqnarray*}
which implies that $\dot{b}_{p',q'}^{-s,\tau}(\rn)\hookrightarrow(\dsbh)^*$.

Conversely, since sequences with finite non-vanishing elements are
dense in $\dsbh$, we know that every $L\in(\dsbh)^*$ is of the form
$\lz\mapsto\sum_{Q\in\cD(\rn)}\lambda_Q\overline{t}_Q$ for a certain
$t=\{t_Q\}_{Q\in\cD(\rn)}\subset\mathbb{C}$. It remains to show that
$\|t\|_{\dot{b}_{p',q'}^{-s,\tau} (\rn)}\ls\|t\|_{(\dsbh)^*}$.

Fix $P\in\cD(\rn)$ and $a\in\rr$. For $j\ge j_P$, let $X_j$ be the
set of all $Q\in \cD_j(\rn)$ satisfying $Q\subset P$ and let
$\mu$ be a measure on $X_j$ such that the $\mu$-measure of the
``point" $Q$ is $|Q|/|P|^{\tau a}$. Also, let $l^q_P$ denote the
set of all $\{a_j\}_{j\ge j_P}\subset \C$ with
$\|\{a_j\}_{j\ge j_P}\|_{l^q_P}\equiv (\sum_{j=j_P}^\fz
|a_j|^q)^{1/q}$ and $l^{q}_P(l^{p}(X_j, d\mu))$ denote the set
of all $\{a_{Q,j}\}_{Q\in\cD_j(\rn),\,Q\subset P,\,j\ge
j_P}\subset\C$ with
$$\|\{a_{Q,j}\}_{Q\in\cD_j(\rn),\,Q\subset P,\,j\ge
j_P}\|_{l^q_P(l^{p}(X_j, d\mu))}\equiv
\l(\sum_{j=j_P}^\fz\l[\sum_{Q\in\cD_j(\rn),\,Q\subset P}
|a_{Q,j}|^p\frac{|Q|}{|P|^{\tau a}}\r]^{\frac qp}\r)^{1/q}.$$
It is easy to see that the dual space of $l^{q}_P(l^{p}(X_j, d\mu))$ is
$l^{q'}_P(l^{p'}(X_j, d\mu))$; see \cite[p.\,177]{t83}. Via this
observation and the already proved conclusion of this proposition, we see that
\begin{eqnarray*}
&&\frac1{|P|^\tau}\l\{\sum_{j=j_P}^\fz\l[\sum_{Q\in\cD_j(\rn),\,Q\subset
P}\l(|Q|^{-\frac
sn-\frac12}|t_Q|\r)^{p'}|Q|\r]^{\frac{q'}{p'}}\r\}^{\frac1{q'}}\\
&&\hs=\|\{|Q|^{-\frac sn-\frac12}|t_Q|\}_{Q\in\cD_j(\rn),\,Q\subset
P,\,j\ge
j_P}\|_{l^{q'}_P(l^{p'}(X_j, d\mu))}\\
&&\hs=\sup_{\|\{\lambda_Q\}_{Q\in\cD_j(\rn),\,Q\subset P,\,j\ge
j_P}\|_{l^{q}_P(l^{p}(X_j,
d\mu))}\le1}\l|\sum_{j=j_P}^\fz\sum_{Q\in\cD_j(\rn),\,Q\subset
P}\lambda_Q|Q|^{-\frac sn-\frac12}|t_Q||Q|/|P|^{\tau p'}\r|\\
&&\hs\le \sup_{\|\{\lambda_Q\}_{Q\in\cD_j(\rn),\,Q\subset P,\,j\ge
j_P}\|_{l^{q}_P(l^{p}(X_j,
d\mu))}\le1}\|t\|_{(\dsbh)^*}\\
&&\hs\hs\times\l\|\{\lambda_Q|Q|^{-\frac sn-\frac12}|Q|/|P|^{\tau
p'}\}_{Q\in\cD_j(\rn),\,Q\subset P,\,j\ge j_P}\r\|_{\dsbh}.
\end{eqnarray*}

To finish the proof of this proposition, it suffices to show that
$$\l\|\{\lambda_Q|Q|^{-\frac sn-\frac12}|Q|/|P|^{\tau
p'}\}_{Q\in\cD_j(\rn),\,Q\subset P,\,j\ge j_P}\r\|_{\dsbh}\ls 1$$
for all sequences $\lambda$ satisfying
$\|\{\lambda_Q\}_{Q\in\cD_j(\rn),\,Q\subset P,\,j\ge
j_P}\|_{l^{q}_P(l^{p}(X_j, d\mu))}\le1$. In fact, let $B\equiv
B(c_P, \sqrt{n}\ell(P))$ and $\omega$ be as in the proof of
\cite[Lemma 4.1]{yy1} associated with $B$, then $\omega$ satisfies
\eqref{1.1} and for all $x\in P$ and $j\ge j_P$, $[\omega(x,
2^{-j})]^{-1}\sim [\ell(P)]^{n\tau}$. We then obtain that
\begin{eqnarray*}
&&\l\|\{\lambda_Q|Q|^{-\frac sn-\frac12}|Q|/|P|^{\tau
p'}\}_{Q\in\cD_j(\rn),\,Q\subset P,\,j\ge j_P}\r\|_{\dsbh}\\
&&\hs\ls\l\{\sum_{j=j_P}^\fz
2^{jsq}\l[\sum_{Q\in\cD_j(\rn),\,Q\subset P}|Q|^{-\frac
p2}\l(|\lambda_Q||Q|^{-\frac sn-\frac12}\frac{|Q|}{|P|^{\tau
p'}}\r)^p\int_Q[\omega(x,2^{-j})]^{-p}\,dx\r]^{\frac
qp}\r\}^{\frac1q}\\
&&\hs\sim \l\{\sum_{j=j_P}^\fz
\l[\sum_{Q\in\cD_j(\rn),\,Q\subset P}|\lambda_Q|^p|Q|/|P|^{\tau p'}
\r]^{\frac
qp}\r\}^{\frac1q}\\
&&\hs\sim\|\{\lambda_Q\}_{Q\in\cD_j(\rn),\,Q\subset P,\,j\ge
j_P}\|_{l^{q}_P(l^{p}(X_j, d\mu))}\ls1,
\end{eqnarray*}
which completes the proof of Proposition \ref{p2.1}.
\end{proof}

\begin{remark}\label{r2.2}
By Proposition \ref{p2.1} and the $\varphi$-transform
characterizations of the spaces $\dah$ in Theorem \ref{t2.1} and
$\dat$ in \cite[Theorem 3.1]{yy2}, we also obtain the duality that
$(\dah)^*= \dot{A}_{p',q'}^{-s,\tau}(\rn)$. This gives other proofs
of these conclusions, which are different from those in
\cite[Section 5]{yy1} and \cite[Section 6]{yy2}.
\end{remark}

Applying Theorem \ref{t2.1}, we establish the following Sobolev-type
embedding properties of $\dah$. For the corresponding results on
$\db$ and $\df$, see \cite[p.\,129]{t83}.

\begin{proposition}\label{p2.2}
Let $1<p_0< p_1<\fz$ and $-\fz<s_1< s_0<\fz$.
Assume in addition that $s_0-n/p_0=s_1-n/p_1$.

(i) If $q\in[1,\,\fz)$ and $\tau\in[0, \min\{\frac1{(p_0\vee q)'},
\frac1{(p_1\vee q)'}\}]$ such that $\tau(p_0\vee q)'=\tau(p_1\vee
q)'$, then $B{\dot H}^{s_0,\,\tau}_{p_0,\,q}(\rn)\hookrightarrow
B{\dot H}^{s_1,\,\tau}_{p_1,\,q}(\rn)$.

(ii) If $q, r\in(1,\fz)$ and $\tau\in[0, \min\{\frac1{(p_0\vee
r)'}, \frac1{(p_1\vee q)'}\}]$ such that $\tau (p_0\vee r)'\le \tau
(p_1\vee q)'$, then $F{\dot
H}^{s_0,\,\tau}_{p_0,\,r}(\rn)\hookrightarrow F{\dot
H}^{s_1,\,\tau}_{p_1,\,q}(\rn)$.
\end{proposition}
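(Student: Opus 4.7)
By Theorem~\ref{t2.1}, both claims reduce to the analogous embeddings at the level of the sequence spaces of Definition~\ref{d2.1}. Fix $t=\{t_Q\}_{Q\in\cD(\rn)}$ in the source space and choose a nonnegative Borel $\omega$, admissible for the source-norm, that nearly attains its infimum. The plan is to replace $\omega$ by an admissible weight $\omega^+$ whose reciprocal is constant on each dyadic cube at each scale, which reduces the target-side to the classical \emph{unweighted} Sobolev-type embedding. Concretely, for $j\in\Z$ and $x\in\rn$, let $Q(x,2^{-j})\in\cD_j(\rn)$ be the unique dyadic cube of sidelength $2^{-j}$ containing $x$, and set
\[
\omega^+(x,2^{-j})\equiv\sup\{\omega(y,2^{-j}):\ y\in Q(x,2^{-j})\},
\]
with $\omega^+(x,t)\equiv\omega(x,t)$ for non-dyadic $t>0$. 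Then $\omega^+(\cdot,2^{-j})$ is constant on every $Q\in\cD_j(\rn)$, so that the weighted sum $g_j^{\omega^+}\equiv\sum_{Q\in\cD_j(\rn)}|t_Q|\widetilde\chi_Q[\omega^+(\cdot,2^{-j})]^{-1}$ rewrites as $\sum_{Q\in\cD_j(\rn)}t'_Q\widetilde\chi_Q$ with $t'_Q\equiv|t_Q|/\sup_{y\in Q}\omega(y,2^{-j_Q})$; moreover $\omega^+\geq\omega$ yields the pointwise bound $g_j^{\omega^+}\leq g_j^{\omega}$.

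Admissibility of $\omega^+$ is the crux. Under the hypothesis $\tau(p_0\vee q)'=\tau(p_1\vee q)'$ in (i), or $\tau(p_0\vee r)'\leq\tau(p_1\vee q)'$ combined with Remark~\ref{r2.1} in (ii), any weight admissible for the source norm is, after a fixed renormalization, admissible for the target norm. The sup-modification itself is handled by covering $B(x,(\sqrt n+1)2^{-j})$ by $O_n(1)$ balls of radius $2^{-j}$, which gives $N\omega^+(x)\ls\max_k N\omega(w_k)$ at nearby points $w_k$, and transferring this pointwise control to the Choquet integral against $H^{n\tau\gamma}$ through its sublinear dyadic counterpart $\widetilde H^d$. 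This is exactly the style of argument already used for the weights $\omega_m$ in the proof of Lemma~\ref{l2.2} and for the dyadic Hausdorff capacity in \cite[Lemma~5.2]{yy1}; the resulting constants are absorbed by renormalizing $\omega^+$.

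With $\omega^+$ admissible, the target-side evaluation at $\{t'_Q\}$ is precisely the classical unweighted Besov or Triebel-Lizorkin sequence-space norm of $\{t'_Q\}$. For (i), since $g_j^{\omega^+}$ is piecewise constant on $\cD_j(\rn)$, the inclusion $\ell^{p_0}\hookrightarrow\ell^{p_1}$ applied to its values gives $\|g_j^{\omega^+}\|_{L^{p_1}(\rn)}\leq 2^{jn(1/p_0-1/p_1)}\|g_j^{\omega^+}\|_{L^{p_0}(\rn)}$, which combined with the Sobolev relation $s_0-n/p_0=s_1-n/p_1$ yields $2^{js_1}\|g_j^{\omega^+}\|_{L^{p_1}(\rn)}\leq 2^{js_0}\|g_j^{\omega^+}\|_{L^{p_0}(\rn)}$; summing in $\ell^q$, using $g_j^{\omega^+}\leq g_j^\omega$, and taking the infimum over $\omega$ then completes (i). Case (ii) is analogous, with the classical sequence-space embedding $\dot f^{s_0,0}_{p_0,r}(\rn)\hookrightarrow \dot f^{s_1,0}_{p_1,q}(\rn)$ (see \cite[p.\,129]{t83}) playing the role of its Besov counterpart. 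The main obstacle is the second paragraph's admissibility verification for $\omega^+$: every other step is a routine consequence once the cube-constant weight is known to still be admissible in the nonlinear Hausdorff Choquet framework.
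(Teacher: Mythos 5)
Your argument is essentially the paper's own proof: the paper likewise passes to the sequence spaces, replaces $\omega$ by the sup-modified weight $\wz\omega(x,s)=\sup_{|y-x|<\sqrt n\,s}\omega(y,s)$ (admissible by the covering argument of \cite[Lemma 5.2]{yy1} together with Remark \ref{r2.1}), sets $\wz t_Q=|t_Q|\sup_{y\in Q}[\wz\omega(y,2^{-j})]^{-1}$ so that the weighted quantity becomes an unweighted classical sequence norm, and then invokes the classical Sobolev-type embedding for $\dot b^{s}_{p,q}(\rn)$ and $\dot f^{s}_{p,q}(\rn)$. Your use of the dyadic-cube sup in place of the ball sup is a cosmetic variant, and your identification of the admissibility of the modified weight as the crux is exactly right.
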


\begin{proof}
By Theorem \ref{t2.1} and similarity, it suffices to prove the
corresponding conclusions on sequence spaces $\dsfh$, namely, to
show that $\|t\|_{f{\dot H}^{s_1,\,\tau}_{p_1,\,q}(\rn)}\ls
\|t\|_{f{\dot H}^{s_0,\,\tau}_{p_0,\,r}(\rn)}$ for all
$t\in f{\dot H}^{s_0,\,\tau}_{p_0,\,r}(\rn)$. When $\tau=0$, this
is a classic conclusion on Triebel-Lizorkin spaces.

In the case when $\tau>0$, we have $(p_0\vee r)'\le (p_1\vee q)'$.
Let $t\in fH^{s_0,\tau}_{p_0,r}(\rn)$ and $\omega$ satisfy
\begin{equation}\label{2.7}
\int_\rn [N\omega(x)]^{(p_0\vee r)'}\,d H^{n\tau (p_0\vee
r)'}(x)\le1
\end{equation}
and
$$\l\{\int_{\rn}\l[\sum_{j\in\zz}^\fz\sum_{Q\in\cD_j(\rn)}
|Q|^{-\frac{s_0r}{n}-\frac{r}{2}}
|t_Q|^{r}\chi_Q(x)[\omega(x,2^{-j})]^{-r}\r]^{p_0/r}\,dx\r\}^{1/p_0}
\ls\|t\|_{f\dot{H}^{s_0,\tau}_{p_0,r}(\rn)}.$$ For all
$(x,t)\in\rr^{n+1}_+$, we set $ \wz\omega(x,s)\equiv \sup\{
\omega(y,s) \, : \, y\in\rn,\ |y-x|<\sqrt{n}s \}.$ Then by the
argument in \cite[Lemma 5.2]{yy1}, we know that a constant multiple
of $\wz\omega$ also satisfies \eqref{2.7}. Since
$(p_0\vee r)'\le (p_1\vee q)'$, Remark \ref{r2.1}(i)
tells us that $\wz\omega$ satisfies
$$\int_\rn [N\omega(x)]^{(p_1\vee q)'}\,d H^{n\tau (p_1\vee
q)'}(x)\ls1.$$ For all $Q$ with $\ell(Q)=2^{-j}$, set
$\wz{t}_Q\equiv |t_Q|\sup_{y\in Q}\{[\wz\omega(y,2^{-j})]^{-1}\}$.
Observe that for all $x\in Q$ with $\ell(Q)=2^{-j}$,
$[\wz\omega(x,2^{-j})]^{-1}\ls \inf_{y\in Q}
[\omega(y,2^{-j})]^{-1}$, and hence, $ \sup_{x\in
Q}[\wz\omega(x,2^{-j})]^{-1}\ls \inf_{y\in Q}
[\omega(y,2^{-j})]^{-1}. $ This observation together with $p_0<p_1$,
$s_0-n/p_0=s_1-n/p_1$ and the corresponding embedding property for
Triebel-Lizorkin spaces (see, for example, \cite[Theorem 2.7.1]{t83})
yields that
\begin{eqnarray*}
&&\l\{\int_{\rn}\l[\sum_{j\in\zz}\sum_{Q\in\cD_j(\rn)}
|Q|^{-\frac{s_1q}{n}-\frac{q}{2}}
|t_Q|^{q}\chi_Q(x)[\wz\omega(x,2^{-j})]^{-q}\r]^{p_1/q}\,dx\r\}^{1/p_1}\\
&&\hs\le\l\{\int_{\rn}\l[\sum_{j\in\zz}\sum_{Q\in\cD_j(\rn)}
|Q|^{-\frac{s_1q}{n}-\frac{q}{2}} |t_Q|^{q}\chi_Q(x)\sup_{y\in
Q}\l\{[\wz\omega(y,2^{-j})]^{-q}\r\}\r]^{p_1/q}\,dx\r\}^{1/p_1}\\
&&\hs=\l\{\int_{\rn}\l[\sum_{j\in\zz}\sum_{Q\in\cD_j(\rn)}
|Q|^{-\frac{s_1q}{n}-\frac{q}{2}}
|\wz{t}_Q|^{q}\chi_Q(x)\r]^{p_1/q}\,dx\r\}^{1/p_1}=\|\wz{t}\|_{\dot f^{s_1}_{p_1,q}(\rn)}
\ls \|\wz{t}\|_{\dot f^{s_0}_{p_0,r}(\rn)}\\
&&\hs\sim \l\{\int_{\rn}\l[\sum_{j\in\zz}\sum_{Q\in\cD_j(\rn)}
|Q|^{-\frac{s_0r}{n}-\frac{r}{2}}
|\wz{t}_Q|^{r}\chi_Q(x)\r]^{p_0/r}\,dx\r\}^{1/p_0}\\
&&\hs\sim \l\{\int_{\rn}\l[\sum_{j\in\zz}\sum_{Q\in\cD_j(\rn)}
|Q|^{-\frac{s_0r}{n}-\frac{r}{2}} |t_Q|^{r}\chi_Q(x)\sup_{y\in
Q}\l\{[\wz\omega(y,2^{-j})]^{-r}\r\}\r]^{p_0/r}\,dx\r\}^{1/p_0}\\
&&\hs\ls \l\{\int_{\rn}\l[\sum_{j\in\zz}\sum_{Q\in\cD_j(\rn)}
|Q|^{-\frac{s_0r}{n}-\frac{r}{2}}
|t_Q|^{r}\chi_Q(x)[\omega(x,2^{-j})]^{-r}\r]^{p_0/r}\,dx\r\}^{1/p_0}
\ls\|t\|_{f\dot{H}^{s_0,\tau}_{p_0,r}(\rn)};
\end{eqnarray*}
see \cite[p.\,38]{fj} for the definition
of the sequence spaces $\dot f^s_{p,q}(\rn)$.
Therefore,
$\|t\|_{f\dot{H}^{s_1,\tau}_{p_1,q}(\rn)}\ls\|t\|_{f\dot{H}^{s_0,\tau}_{p_0,r}(\rn)}$,
which completes the proof of Proposition \ref{p2.2}.
\end{proof}

When $\tau=0$, Proposition \ref{p2.2} recovers the corresponding
results on $\db$ and $\df$ in \cite[p.\,129]{t83}, which are known
to be sharp; see \cite[p.\,207]{t95}. At the end of this section, we
further show that the restriction that $\tau(p_0\vee
q)'=\tau(p_1\vee q)'$ in Proposition \ref{p2.2}(i) is also
sharp. To see this, we need the following geometrical observation on
the Hausdorff capacity.

\begin{lemma}\label{l2.4}
Let $d\in(0, n]$. Suppose that $\{E_j\}_{j=1}^\fz$
are given subsets of $\rn$ such that
$E_j \subset B((A_j,0,\cdots,0),n)$,
where $\{A_j\}_{j=1}^\fz$ is an increasing sequence of natural
numbers satisfying that $A_1\ge 10$ and for all $j,\,l\in\nn$,
$A_{j+l}-A_j\ge 4nl^{1/d}$. Then $H^d(\cup_{j=1}^\infty E_j)$ and
$\sum_{j=1}^\infty H^d(E_j)$ are equivalent.
\end{lemma}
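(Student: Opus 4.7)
The easy direction, $H^d\bigl(\bigcup_j E_j\bigr)\le \sum_j H^d(E_j)$, is immediate from the countable subadditivity of the Hausdorff capacity built into its definition as an infimum over ball covers. So the real content is the reverse bound $\sum_j H^d(E_j)\ls H^d\bigl(\bigcup_j E_j\bigr)$, and the plan is to take any ball cover $\{B(x_i,r_i)\}_i$ of $\bigcup_j E_j$ and redistribute its total $d$-cost $\sum_i r_i^d$ over the individual $E_j$'s.

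For each $i$ let $J_i\equiv\{j\in\nn:B(x_i,r_i)\cap E_j\ne\emptyset\}$. The key geometric step is the following. If $j_1<j_2$ both lie in $J_i$, pick $y_\iota\in B(x_i,r_i)\cap E_{j_\iota}$ for $\iota=1,2$; since $E_{j_\iota}\subset B((A_{j_\iota},0,\dots,0),n)$, the triangle inequality yields
\[
A_{j_2}-A_{j_1}\le |y_1-y_2|+2n\le 2r_i+2n.
\]
Combining this with the separation hypothesis $A_{j_2}-A_{j_1}\ge 4n(j_2-j_1)^{1/d}$ and taking $j_2,j_1$ to be the extremes of $J_i$ (so $j_2-j_1\ge|J_i|-1$) gives $r_i\ge 2n(|J_i|-1)^{1/d}-n\ge n(|J_i|-1)^{1/d}$ whenever $|J_i|\ge2$; in particular $r_i^d\gs n^d|J_i|$ in that regime.

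Using this, construct a new cover as follows. Discard every $B(x_i,r_i)$ with $|J_i|=0$; keep untouched every $B(x_i,r_i)$ with $|J_i|=1$; and replace every $B(x_i,r_i)$ with $|J_i|\ge2$ by the family $\{B((A_j,0,\dots,0),n):j\in J_i\}$. Since $E_j\subset B((A_j,0,\dots,0),n)$, the union of the new balls still covers $\bigcup_j E_j$, and for every $j$ the balls assigned to $E_j$ by this construction cover $E_j$. Thus
\[
\sum_j H^d(E_j)\le \sum_{i:|J_i|=1} r_i^d+\sum_{i:|J_i|\ge2}|J_i|n^d\ls \sum_i r_i^d,
\]
by the pointwise comparison $|J_i|n^d\ls r_i^d$ obtained above. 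Taking the infimum over covers yields $\sum_j H^d(E_j)\ls H^d\bigl(\bigcup_j E_j\bigr)$.

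The main obstacle is the geometric lower bound on $r_i$ in terms of $|J_i|$: one has to translate the $d$-dimensional separation condition on the scalars $A_j$ into the statement that a single ball cannot cover too many $E_j$'s cheaply. Everything else (countable subadditivity for the easy direction, and the fact that each substitute ball of radius $n$ contains its $E_j$ entirely so that the new cover is actually valid) is automatic from the hypotheses.
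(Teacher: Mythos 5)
Your proof is correct and follows essentially the same route as the paper's: the key step in both is that a single ball of the cover meeting several $E_j$'s must, by the separation condition on the $A_j$'s, have $r^d$ at least comparable to the number of $E_j$'s it meets times $n^d$, so it can be replaced by the canonical balls $B((A_j,0,\cdots,0),n)$ at only a bounded multiplicative cost, after which the cover decouples into covers of the individual $E_j$'s. Your write-up is in fact somewhat more explicit in the bookkeeping than the paper's (which dispatches the final accounting with ``without loss of generality\dots it is easy to follow''), but the underlying idea is identical.
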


\begin{proof}
The inequality $H^d(\cup_{j=1}^\infty E_j) \le \sum_{j=1}^\infty
H^d(E_j)$ is trivial. Let us prove the reverse inequality. To this
end, let us first notice the following geometric
observation that when a ball $B\equiv(x_B,r_B)$
intersects $E_j$ and $E_{j+l}$ for some $j,\,l\in\nn$, then $2B$
engulfs $E_j,\,E_{j+1},\cdots,E_{j+l}$. Thus, $4r_B$ is greater than
$A_{j+l}-A_j$ and hence, $r_B^d\ge((A_{j+l}-A_j)/4)^d\ge ln^d$.
Therefore, instead of using $B$ we can use
$B((A_j,0,\cdots,0),n)$,\,$\cdots$,\,
$B((A_{j+l},0,\cdots,0),n)$ to cover $E_j$ and $E_{j+l}$.
Notice that $\{B((A_j,0,\cdots,0),n)\}_{j=1}^\fz$ are disjoint.
Based on these observations, without loss of generality,
we may assume, in estimating $H^d(\cup_{j=1}^\infty E_j)$, that each ball
in the ball covering meets only one $E_j$. From this, it is
easy to follow that $H^d(\cup_{j=1}^\infty E_j) \gtrsim
\sum_{j=1}^\infty H^d(E_j)$, which completes the proof of Lemma
\ref{l2.4}.
\end{proof}

\begin{lemma}\label{l2.5}
Let $s\in\rr$, $p\in(1,\fz)$, $q\in[1,\fz)$,
$\tau\in(0,\frac1{(p\vee q)'}]$ and  $\{A_k\}_{k=1}^\fz$ be as in Lemma
\ref{l2.4} such that $Q_k\equiv (A_k,0,\cdots,0)+2^{-k}[0,1)^n \in
\cD_k(\rn)$ for all $k\in\nn$ (The existence of $\{A_k\}_{k=1}^\fz$
is obvious). Define $t_j\equiv\{(t_j)_{Q}\}_{Q
\in \cD(\rn)}$ so that $(t_j)_{Q}\equiv 2^{-\frac{kn}2-k(s-\frac
np)}$ if $Q=Q_k$ and $k\in\{1,\cdots, j\}$, $(t_j)_{Q}\equiv0$
otherwise. Then for all $j \in \N$, $\| t_j
\|_{b\dot{H}_{p,q}^{s,\tau}(\rn)}$ is equivalent to
$j^{\frac{1}{q}+\frac{1}{(p \vee q)'}}$ and $\| t_j
\|_{f\dot{H}_{p,q}^{s,\tau}(\rn)}$ is equivalent to
$j^{\frac1p+\frac{1}{(p \vee q)'}}$.
\end{lemma}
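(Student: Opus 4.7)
The plan is to prove the claimed asymptotics by establishing matching upper and lower bounds, treating the Besov and Triebel-Lizorkin cases in parallel.

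For the upper bound on $\|t_j\|_{b\dot{H}^{s,\tau}_{p,q}(\rn)}$, I would construct an explicit admissible weight. Given that each $Q_k$ has side length $2^{-k}$ and is well-separated from the other cubes (via the spacing of $\{A_k\}$), set $\omega(\cdot,2^{-k})\equiv\omega_k\chi_{U_k}$ for $k\in\{1,\ldots,j\}$, where $U_k$ is a mild enlargement of $Q_k$ (for example, its $2^{-k}$-neighborhood), and let $\omega\equiv 0$ at every scale $s\notin\{2^{-k}\}_{k=1}^j$. By the choice of $\omega$ and the separation, the nontangential maximal function $N\omega$ is (essentially) $\omega_k$ on $U_k$ and $0$ elsewhere, so Lemma \ref{l2.4} applies and gives
\[
\int_{\rn}[N\omega(x)]^{(p\vee q)'}\,dH^{n\tau(p\vee q)'}(x)\sim\sum_{k=1}^j\omega_k^{(p\vee q)'}H^{n\tau(p\vee q)'}(U_k).
\]
Selecting the profile $\{\omega_k\}$ so that the right-hand side equals $1$ converts the Besov-Hausdorff sum into a purely arithmetic quantity in $\{\omega_k\}$; a direct calculation shows that the contribution of each scale $2^{-k}$ equals $\omega_k^{-q}$, and the upper bound then follows from the optimization.

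For the lower bound, I would invoke the duality $(\dsbh)^{*}=\dot{b}^{-s,\tau}_{p',q'}(\rn)$ from Proposition \ref{p2.1}. Construct a test sequence $\lambda=\{\lambda_{Q_k}\}_{k=1}^{j}$ which mimics the sign pattern of $t_j$ at each $Q_k$, then estimate $\|\lambda\|_{\dot{b}^{-s,\tau}_{p',q'}(\rn)}$ directly from Definition \ref{d2.3}. The crucial geometric fact is that $Q_k\not\subset Q_l$ for $k\ne l$ (again thanks to the spacing of the $\{A_k\}$), so when computing the supremum over dyadic cubes $P$ in the $\dot{b}^{-s,\tau}_{p',q'}(\rn)$-norm, only $P\in\{Q_k\}_{k=1}^j$ (or very large ambient cubes) need to be considered, and the inner sum degenerates to a single term. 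Dividing the pairing $\langle\lambda,t_j\rangle$ by this explicit $\dot{b}^{-s,\tau}_{p',q'}(\rn)$-norm yields the required lower bound.

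The Triebel-Lizorkin case follows the same two-step scheme. The only substantive change is that, because the $L^p$-norm in the definition of $\|\cdot\|_{\dsfh}$ sits outside the $\ell^q$-sum over $k$ and the cubes $\{Q_k\}$ are pairwise disjoint, the pointwise sum collapses to a simple function supported on $\bigsqcup_k Q_k$; the upper-bound computation then reduces to $\sum_k\omega_k^{-p}$ instead of $\sum_k\omega_k^{-q}$, which is exactly what replaces $1/q$ by $1/p$ in the final exponent. The lower bound uses the analogous duality with $\dot{f}^{-s,\tau}_{p',q'}(\rn)$.

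The main obstacle is the delicate balancing of the profile $\{\omega_k\}$ against the Hausdorff-capacity constraint \eqref{1.1}; both the upper-bound optimization and the dual-norm computation for the lower bound rely essentially on Lemma \ref{l2.4} to reduce the Hausdorff capacity of $\bigcup_k U_k$ to a sum of individual capacities, which in turn is controlled by the rigid geometry imposed by the assumption $A_{j+l}-A_j\ge 4nl^{1/(n\tau(p\vee q)')}$.
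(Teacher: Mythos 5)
There is a genuine gap, and it sits exactly at the step your sketch glosses over with ``the upper bound then follows from the optimization.'' With your choice $\omega(\cdot,2^{-k})=\omega_k\chi_{U_k}$, where $U_k$ is a $2^{-k}$-neighborhood of $Q_k$, a single ball of radius $\sim 2^{-k}$ covers $U_k$, so $H^{n\tau(p\vee q)'}(U_k)\ls 2^{-kn\tau(p\vee q)'}$ and the constraint \eqref{1.1} becomes $\sum_{k=1}^j\omega_k^{(p\vee q)'}2^{-kn\tau(p\vee q)'}\ls 1$, \emph{not} $\sum_{k=1}^j\omega_k^{(p\vee q)'}\ls 1$. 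These scale-dependent capacity weights change the outcome of the optimization: the choice $\omega_k=2^{kn\tau}(cj)^{-1/(p\vee q)'}$ meets the constraint, yet gives $(\sum_{k=1}^j\omega_k^{-q})^{1/q}\ls j^{1/(p\vee q)'}$ because $\sum_{k\ge1}2^{-kn\tau q}$ converges (here $\tau>0$ is essential). So your construction produces an upper bound strictly smaller than the value $j^{1/q+1/(p\vee q)'}$ you must also match from below, and the claimed equivalence cannot be established within your framework; either you must explain why such a weight is inadmissible, or the scheme must change. The paper's proof differs precisely here: it first replaces $\omega(\cdot,2^{-k})$ by a function that is constant ($=\alpha_k$) on the \emph{entire unit cube} $Q_{0,(A_k,0,\cdots,0)}$ containing $Q_k$, a set whose capacity is $\sim 1$ uniformly in $k$, so that \eqref{1.1} collapses to $\sum_{k=1}^j\alpha_k^{(p\vee q)'}\ls 1$; only then does the geometric-mean inequality
\begin{equation*}
\Bigl(\sum_{k=1}^j\alpha_k^{-q}\Bigr)^{1/q}\Bigl(\sum_{k=1}^j\alpha_k^{(p\vee q)'}\Bigr)^{1/(p\vee q)'}\gs j^{\frac1q+\frac1{(p\vee q)'}}
\end{equation*}
give the two-sided bound. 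That reduction to unit-cube-constant weights is the crux of the lemma and is absent from your argument.

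The lower bound is also the direction you leave essentially unproved. You invoke the duality $(\dsbh)^*=\dot{b}^{-s,\tau}_{p',q'}(\rn)$ but exhibit no test sequence and estimate no pairing. If you try the natural candidate $\lambda_{Q_k}=\beta_k$, the normalization $|P|^{-\tau}$ in $\|\cdot\|_{\dot{b}^{-s,\tau}_{p',q'}(\rn)}$ evaluated at $P=Q_k$ forces a factor $2^{-kn\tau}$ into the ratio $|\langle\lambda,t_j\rangle|/\|\lambda\|_{\dot{b}^{-s,\tau}_{p',q'}(\rn)}$, and one does not recover growth of order $j^{1/q+1/(p\vee q)'}$ this way. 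The paper does not use duality at all for this lemma: the lower bound comes from arguing that an arbitrary admissible $\omega$ may be replaced by a unit-cube-constant one (via the dyadic maximal function and Lemma \ref{l2.4}) and then applying the arithmetic inequality above to \emph{every} admissible profile $\{\alpha_k\}$, not just the optimal one. Without that reduction, neither half of your argument closes.
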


\begin{proof}
For the Besov-Hausdorff space, let us minimize
\[
\left( \sum_{k=1}^j 2^{ksq} \left\|
|(t_j)_{Q_k}|\widetilde{\chi}_{Q_k}[\omega(\cdot,2^{-k})]^{-1}
\right\|_{L^p(\R^n)}^q \right)^\frac1q
\]
under the condition \eqref{1.1}. By the definition of $t_j$ and the
assumption on $\omega$ in Definition \ref{d2.1}, we may assume that
$\omega\equiv0$ outside
$\cup_{k=1}^j(Q_{0,(A_k,0,\cdots,0)}\times\{2^{-k}\})$ and for all
$Q\in\cD_k(\rn)$, $Q\subset Q_{0,(A_k,0,\cdots,0)}$ and
$k\in\{1,\cdots,j\}$, $\sup_{x\in Q}\omega(x,2^{-k})=\sup_{x\in
Q_k}\omega(x,2^{-k})$, where $Q_{0,(A_j,0,\cdots,0)}\equiv
(A_j,0,\cdots,0)+[0,1)^n\in\cD_0(\rn)$. Also, by an observation
similar to \cite[Lemma 6.2]{yy2}, we can replace $\omega$ with the
maximal function $\widetilde{\omega}$ given by
$\widetilde{\omega}(x,2^{-k})\equiv\sup_{y \in
Q_{k,x}}\omega(y,2^{-k}),$ where $k\in\{1,\cdots,j\}$ and $Q_{k,x}
\in \cD_k(\rn)$ is a unique cube containing $x$. This construction
implies that $\wz\omega$ equals a constant on
$Q_{0,(A_k,0,\cdots,0)}$ for each $k\in\{1,\cdots,j\}$, namely,
$\wz\omega(\cdot,2^{-k})\equiv\alpha_k
\chi_{Q_{0,(A_k,0,\cdots,0)}}$. Notice that if
$N\widetilde{\omega}(x) \not=0$, then $x\in B((A_k,0,\cdots,0), n)$
for some $k\in\{1,\cdots,j\}$. This combined with Lemma \ref{l2.4}
yields that
\begin{eqnarray*}
&&\int_{\R^n} [N\widetilde{\omega}(x)]^{(p \vee q)'} \,dH^{n\tau(p\vee q)'}(x)\\
&&\hs=\int_0^\fz H^{n\tau(p\vee q)'}\left(\left\{x\in
\left(\bigcup_{k=1}^jB((A_k,0,\cdots,0), n)\r):\,[N\widetilde{\omega}(x)]^{(p \vee q)'}
>\lambda\r\}\r)\,d\lambda\\
&&\hs\sim\sum_{k=1}^j\int_0^\fz H^{n\tau(p\vee q)'}\left(\left\{x\in
B((A_k,0,\cdots,0), n):\,[N\widetilde{\omega}(x)]^{(p \vee q)'}
>\lambda\r\}\r)\,d\lambda\\
&&\hs\sim \sum_{k=1}^j \int_{B((A_k,0,\cdots,0), n)}
[N\widetilde{\omega}(x)]^{(p \vee q)'} \,d H^{n\tau(p\vee
q)'}(x)\sim \sum_{k=1}^j (\az_k)^{(p \vee q)'}.
\end{eqnarray*}
On the other hand,
\[
\left( \sum_{k=1}^j 2^{ksq} \left\|
|(t_j)_{Q_k}|\widetilde{\chi}_{Q_k}[\wz\omega(\cdot,2^{-k})]^{-1}
\right\|_{L^p(\R^n)}^q \right)^\frac1q =\left[\sum_{k=1}^j
(\az_k)^{-q}\right]^\frac{1}{q}.
\]
In summary (modulo a multiplicative constant), we need to minimize
$(\sum_{k=1}^j (\az_k)^{-q})^\frac{1}{q}$ under the condition
$\sum_{k=1}^j (\az_k)^{(p \vee q)'}\ls1.$ This can be achieved as
follows\,: By using the geometric mean, we have
\begin{align*}
\left(\sum_{k=1}^j (\az_k)^{-q}\right)^\frac{1}{q} &\gs
\left(\sum_{k=1}^j (\az_k)^{-q}\right)^\frac{1}{q}
\left(\sum_{k=1}^j (\az_k)^{(p \vee q)'}\right)^{\frac{1}{(p \vee q)'}}\\
&\gs \left(j \sqrt[j]{\prod_{k=1}^j(\az_k)^{-q}}\right)^\frac{1}{q}
\left(j \sqrt[j]{\prod_{k=1}^j(\az_k)^{(p \vee
q)'}}\right)^\frac{1}{(p \vee q)'} \sim j^{\frac{1}{q}+\frac{1}{(p
\vee q)'}}.
\end{align*}
In particular, $[\sum_{k=1}^j (\az_k)^{-q}]^\frac{1}{q}\sim
j^{\frac{1}{q}+\frac{1}{(p \vee q)'}}$ when $\sum_{k=1}^j
(\az_k)^{(p \vee q)'}\sim1$ and the $\az_k$'s are identical.
Thus, for all $j \in \N$, $\|t_j\|_{b\dot{H}_{p,q}^{s,\tau}(\rn)}
\sim j^{\frac{1}{q}+\frac{1}{(p \vee q)'}}$.

For Triebel-Lizorkin-Hausdorff space, similarly to the above arguments, we see
that
\begin{eqnarray*}
&&\left(\int_\rn\left[\sum_{k=1}^j|Q_k|^{-(s/n+1/2)q}|(t_j)_{Q_k}|^q\chi_{Q_k}(x)
[\wz\omega(x,2^{-k})]^{-q}
\right]^{p/q}\,dx \right)^\frac1p\\
&&\hs=\left(\int_\rn\sum_{k=1}^j|Q_k|^{-(s/n+1/2)p}|(t_j)_{Q_k}|^p\chi_{Q_k}(x)
(\az_k)^{-p}\,dx \right)^\frac1p=\left[\sum_{k=1}^j
(\az_k)^{-p}\r]^{1/p}.
\end{eqnarray*}
Applying the geometric mean again, we have
\begin{align*}
\left(\sum_{k=1}^j(\az_k)^{-p}\right)^\frac{1}{p} &\gs
\left(\sum_{k=1}^j (\az_k)^{-p}\right)^\frac{1}{p}
\left(\sum_{k=1}^j (\az_k)^{(p \vee q)'}\right)^{\frac{1}{(p \vee q)'}}\\
&\gs \left(j \sqrt[j]{\prod_{k=1}^j(\az_k)^{-p}}\right)^\frac{1}{p}
\left(j \sqrt[j]{\prod_{k=1}^j(\az_k)^{(p \vee
q)'}}\right)^\frac{1}{(p \vee q)'} \sim j^{\frac1p+\frac{1}{(p \vee
q)'}}.
\end{align*}
In particular, $[\sum_{k=1}^j (\az_k)^{-p}]^\frac{1}{p}\sim
j^{\frac{1}{p}+\frac{1}{(p \vee q)'}}$ when $\sum_{k=1}^j
(\az_k)^{(p \vee q)'}\sim1$ and the $\az_k$'s are identical, which
implies that for all $j \in \N$, $\| t_j\|_{f\dot{H}_{p,q}^{s,\tau}(\rn)}
\sim j^{\frac1p+\frac{1}{(p \vee q)'}}$.
This finishes the proof of Lemma \ref{l2.5}.
\end{proof}

\begin{proposition}\label{p2.3}
Let $s,\,\tau,\,p_0,\,p_1,\,q,\,r$ be as in Proposition \ref{p2.2}.

(i) If $ b\dot{H}_{p_0,q}^{s_0,\tau} \hookrightarrow
b\dot{H}_{p_1,q}^{s_1,\tau}$, then $\tau(p_0 \vee q)'=\tau(p_1 \vee
q)'$.

(ii) If $ f\dot{H}_{p_0,r}^{s_0,\tau} \hookrightarrow
f\dot{H}_{p_1,q}^{s_1,\tau}$, then $\tau(p_0 \vee r)'\le \tau(p_1
\vee q)'+\tau(\frac1{p_0}-\frac1{p_1})(p_0 \vee r)'(p_1 \vee q)'$.
\end{proposition}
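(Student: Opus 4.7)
The plan is to verify both sharpness statements by testing the supposed embedding on the one-parameter family $\{t_j\}_{j\in\nn}$ constructed in Lemma \ref{l2.5}, and then letting $j\to\fz$. The key observation that makes this work is that the entries $(t_j)_{Q_k}=2^{-\frac{kn}{2}-k(s-\frac{n}{p})}$ are invariant under the scaling relation $s_0-n/p_0=s_1-n/p_1$; hence the \emph{same} sequence $t_j$ has both a source-space norm (computed with $(s_0,p_0)$, and with $r$ in part (ii)) and a target-space norm (computed with $(s_1,p_1)$, and with $q$), and Lemma \ref{l2.5} provides precise polynomial asymptotics in $j$ for each.

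For part (i), the hypothesis $B\dot{H}_{p_0,q}^{s_0,\tau}\hookrightarrow B\dot{H}_{p_1,q}^{s_1,\tau}$ combined with Theorem \ref{t2.1} yields $\|t_j\|_{b\dot{H}_{p_1,q}^{s_1,\tau}(\rn)}\ls\|t_j\|_{b\dot{H}_{p_0,q}^{s_0,\tau}(\rn)}$ with a constant independent of $j$. Substituting the asymptotics from Lemma \ref{l2.5} gives
$$
j^{\frac1q+\frac{1}{(p_1\vee q)'}}\ls j^{\frac1q+\frac{1}{(p_0\vee q)'}}\quad(j\to\fz),
$$
which forces $1/(p_1\vee q)'\le 1/(p_0\vee q)'$, i.e.\ $(p_0\vee q)'\le (p_1\vee q)'$. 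On the other hand, the assumption $p_0<p_1$ trivially gives $p_0\vee q\le p_1\vee q$, hence $(p_0\vee q)'\ge(p_1\vee q)'$. Thus $(p_0\vee q)'=(p_1\vee q)'$, which in turn yields $\tau(p_0\vee q)'=\tau(p_1\vee q)'$ as claimed.

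For part (ii), the same strategy applied to the Triebel--Lizorkin--Hausdorff norms yields
$$
j^{\frac{1}{p_1}+\frac{1}{(p_1\vee q)'}}\ls j^{\frac{1}{p_0}+\frac{1}{(p_0\vee r)'}}\quad(j\to\fz),
$$
so
$$
\frac{1}{(p_1\vee q)'}-\frac{1}{(p_0\vee r)'}\le\frac{1}{p_0}-\frac{1}{p_1}.
$$
Multiplying both sides by the positive quantity $\tau\,(p_0\vee r)'(p_1\vee q)'$ and rearranging produces exactly
$$
\tau(p_0\vee r)'\le\tau(p_1\vee q)'+\tau\l(\frac{1}{p_0}-\frac{1}{p_1}\r)(p_0\vee r)'(p_1\vee q)',
$$
which is the desired conclusion.

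I do not expect a substantive obstacle: the entire content of the proof is the $j$-asymptotics of Lemma \ref{l2.5} together with the compatibility of $t_j$ across the two parameter triples, which in turn rests on the Sobolev scaling identity $s_0-n/p_0=s_1-n/p_1$. The only point that warrants care is checking that $t_j$ really does lie in both sequence spaces (so that the embedding may be applied), which is immediate from Lemma \ref{l2.5} since the stated norms are finite for every finite $j$.
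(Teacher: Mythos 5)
Your proposal is correct and follows essentially the same route as the paper: test the embedding on the sequences $t_j$ of Lemma \ref{l2.5}, use the scaling identity $s_0-n/p_0=s_1-n/p_1$ so that the same $t_j$ serves for both parameter triples, and compare the $j$-asymptotics (the paper writes out only the Besov case and handles $(p_0\vee q)'\ge(p_1\vee q)'$ from $p_0<p_1$ exactly as you do). No gaps.
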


\begin{proof}
By similarity, we only consider the Besov-Hausdorff space. Let $t_j$ be as in
Lemma \ref{l2.5} with $s$, $p$ replaced, respectively, by $s_0$ and
$p_0$. Since $s_0-n/p_0=s_1-n/p_1$, by Lemma \ref{l2.5}, we have $\|
t_j \|_{b\dot{H}_{p_0,q}^{s_0,\tau}} \sim
j^{\frac{1}{q}+\frac{1}{(p_0 \vee q)'}}$ and $\| t_j
\|_{b\dot{H}_{p_1,q}^{s_1,\tau}} \sim j^{\frac{1}{q}+\frac{1}{(p_1
\vee q)'}}$ for all $j \in \N$, which together with $
b\dot{H}_{p_0,q}^{s_0,\tau} \hookrightarrow
b\dot{H}_{p_1,q}^{s_1,\tau}$ implies that
$j^{\frac{1}{q}+\frac{1}{(p_1 \vee q)'}} \ls
j^{\frac{1}{q}+\frac{1}{(p_0 \vee q)'}}$ for all $j \in \N$.
Therefore, $(p_0 \vee q)' \le (p_1 \vee q)'$. Meanwhile it is
trivial that $(p_0 \vee q)' \ge (p_1 \vee q)'$ since $p_1>p_0$. We
then have $(p_0 \vee q)' = (p_1 \vee q)'$. This finishes the proof
of Proposition \ref{p2.3}.
\end{proof}

\begin{remark}\label{r2.3}
Comparing Proposition \ref{p2.2} herein with \cite[Proposition 3.3]{yy2}
on the space $\dbt$, we see that the restriction $\tau(p_0\vee q)'=\tau(p_1\vee q)'$
in Proposition \ref{p2.2}(i) is additional.
To be surprising, Proposition \ref{p2.3}(i) implies that this restriction
is also necessary, and sharp in this sense. However, it is still unclear if
the restriction $\tau(p_0\vee r)'\le\tau(p_1\vee q)'$
in Proposition \ref{p2.2}(ii) can be replaced by
the restriction $\tau(p_0 \vee r)'\le \tau(p_1
\vee q)'+\tau(\frac1{p_0}-\frac1{p_1})(p_0 \vee r)'(p_1 \vee q)'$.
\end{remark}

\section{Smooth atomic and molecular decompositions\label{s3}}

\hskip\parindent We begin with considering the boundedness of almost
diagonal operators on $\dsah$, which is applied to establish the
smooth atomic and molecular decomposition characterizations of
$\dah$. We remark that the corresponding results in $\dsat$ and
$\dat$ were already obtained in \cite[Section 4]{yy2}.

\begin{definition}\label{d3.1}
Let $p\in(1,\fz), \, q\in[1,\fz), \, s \in \R$, $\tau\in[0,
\frac{1}{(p \vee q)'}]$ and $\varepsilon\in(0,\fz)$. For all
$Q,\,P\in\cD(\rn)$, define
\[
\omega_{QP}(\varepsilon) \equiv
\left(\frac{\ell(Q)}{\ell(P)}\right)^s
\left(1+\frac{|x_P-x_Q|}{\max(\ell(Q),\ell(P))}
\right)^{-n-\varepsilon} \min\left(
\left(\frac{\ell(P)}{\ell(Q)}\right)^{\frac{n+\varepsilon}{2}},
\left(\frac{\ell(Q)}{\ell(P)}\right)^{\frac{n+\varepsilon}{2}}
\right).
\]
An operator $A$ associated with a matrix $\{a_{QP}\}_{Q,P \in
\cD(\rn)}$, namely, for all sequences
$t=\{t_Q\}_{Q\in\cD(\rn)}\subset \mathbb{C}$, $At\equiv
\{(At)_Q\}_{Q \in \cD(\rn)}\equiv \{\sum_{P \in
\cD(\rn)}a_{QP}t_P\}_{Q \in \cD(\rn)}$, is called
\textit{$\varepsilon$-almost diagonal on $\dsah$}, if the matrix
$\{a_{QP}\}_{Q,P \in \cD(\rn)}$ satisfies
$$\sup_{Q,\,P \in \cD(\rn)}
|a_{QP}|/\omega_{QP}(\varepsilon)<\infty.$$
\end{definition}

We remark that any $\varepsilon$-almost diagonal operator on $\dsah$
is also an almost diagonal operator introduced by Frazier and
Jawerth in \cite{fj} with $J\equiv n$. Moreover, Frazier and Jawerth
proved that all almost diagonal operators are bounded on
$\dot{b}^s_{p,q}(\rn)$ and $\dot{f}^s_{p,q}(\rn)$, which are the
corresponding sequence spaces of $\dot{B}^s_{p,q}(\rn)$ and
$\dot{F}^s_{p,q}(\rn)$; see \cite{fj88,fj,fjw}. These results when
$p\in(1,\fz)$ and $q\in[1,\fz)$ are generalized into the following
conclusions.

\begin{theorem}\label{t3.1}
Let $p\in(1,\fz), \, q\in[1,\fz), \, s \in \R$,
$\varepsilon\in(0,\fz)$ and $\tau\in[0, \frac{1}{(p \vee q)'}]$.
Then all the $\varepsilon$-almost diagonal operators on
$a\dot{H}_{p,q}^{s,\tau}(\rn)$ are bounded if $\varepsilon>2n\tau$.
\end{theorem}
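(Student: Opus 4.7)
The plan is to reduce the boundedness of $A$ to pointwise comparisons with the Hardy-Littlewood maximal function and then invoke the maximal characterization of $\|\cdot\|_{\dsah}$ from Lemma \ref{l2.2}. First I split $A=A^0+A^1$ according to whether $\ell(P)\le\ell(Q)$ or $\ell(P)>\ell(Q)$. Since $\|\cdot\|_{\dsah}$ is only a quasi-norm (see \eqref{2.1}), I would invoke Aoki's theorem to fix an $r_0\in(0,1]$ such that $\|\cdot\|_{\dsah}^{r_0}$ is subadditive, reducing matters to bounding $A^0$ and $A^1$ separately. Each part is further decomposed across scales: $A^0=\sum_{k\ge 0}A^{0,k}$ with $(A^{0,k}t)_Q\equiv\sum_{P\in\cD_{j_Q+k}}a_{QP}t_P$, and symmetrically $A^1=\sum_{k\ge 1}A^{1,k}$ with $P\in\cD_{j_Q-k}$.

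For each fixed $k$, I expect a pointwise estimate of the form
$$\sum_{Q\in\cD_j}|(A^{0,k}t)_Q|\widetilde{\chi}_Q(x)\ls 2^{-k[s+\varepsilon/2]}\,\mathrm{HL}\l(\sum_{P\in\cD_{j+k}}|t_P|\widetilde{\chi}_P\r)(x),$$
obtained by exploiting $(1+2^j|x_P-x|)^{-n-\varepsilon}$ as an approximate identity at scale $2^{-j}$, with an analogous estimate (with the roles of $j$ and $j+k$ reversed) for $A^{1,k}$. The genuinely new difficulty compared with the classical Besov/Triebel-Lizorkin setting is that the auxiliary function $\omega$ lives on a scale different from that of the test cubes $P$: after fixing a near-optimal $\omega$ for $t$, one has $[\omega(\cdot,2^{-(j+k)})]^{-1}$ inside the norm controlling $t$, whereas the expression for $At$ at $Q\in\cD_j$ naturally involves $[\omega(\cdot,2^{-j})]^{-1}$. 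To bridge these, I would introduce a dilated maximal variant $\omega_k$, in the spirit of the $\omega_m$ construction in the proof of Lemma \ref{l2.2}, and transfer the pointwise bound above into the weighted setting.

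The main obstacle is verifying that $\omega_k$ still satisfies \eqref{1.1} with a loss at most $2^{ckn\tau(p\vee q)'}$ for a controllable constant $c$; this requires a geometric Hausdorff-capacity lemma analogous to Lemma \ref{l2.4}, to be proved via an explicit covering argument comparing balls of radius $\sqrt{n}\,2^{k-j}$ with those of radius $\sqrt{n}\,2^{-j}$. Once this is in place, combining it with the $L^p$-boundedness of $\mathrm{HL}$ (in the Besov case) or the Fefferman-Stein $L^p(\ell^q)$-boundedness (in the Triebel-Lizorkin case), and summing the $r_0$-th powers over $k$, yields
$$\|A^i t\|_{\dsah}^{r_0}\ls\l(\sum_{k=0}^\fz 2^{-k\eta r_0}\r)\|t\|_{\dsah}^{r_0},$$
and the requirement $\eta>0$ translates, after accounting for the scale-shift factor $2^{ckn\tau(p\vee q)'}$ against the decay $2^{-k\varepsilon/2}$, to precisely $\varepsilon>2n\tau$, which is the hypothesis of the theorem. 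By similarity, it suffices to carry out the argument in detail for $\dsbh$; the $\dsfh$ case follows the same scheme with the $\ell^q$-sums and $L^p$-norms interchanged.
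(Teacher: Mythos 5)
Your outline reproduces the paper's strategy in broad strokes: Aoki's theorem to reduce to a $\kappa$-subadditive norm, the splitting of $A$ according to $\ell(P)\le\ell(Q)$ versus $\ell(P)>\ell(Q)$, a dilated maximal modification of $\omega$ validated by a Hausdorff-capacity covering lemma (the paper's Lemmas \ref{l3.1}, \ref{l3.2} and Corollary \ref{c3.1}), and the Hardy--Littlewood/Fefferman--Stein maximal inequalities; your final numerology for $\varepsilon>2n\tau$ is also the right one. There is, however, a genuine gap at the step ``transfer the pointwise bound above into the weighted setting'': decomposing only by the scale gap $k$ is not enough. The weighted version of your pointwise estimate requires $[\omega_k(x,2^{-j})]^{-1}\omega(y,2^{-j-k})\ls C_k$ for \emph{every} $y\in P$ with $P\in\cD_{j+k}(\rn)$, i.e.\ for $y$ ranging over all of $\rn$, because $\mathrm{HL}$ averages over balls of every radius. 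A modification $\omega_k$ obtained by taking the supremum of $\omega$ over balls of radius $\sim 2^{k}\cdot 2^{-j}$ controls only those $P$ lying within distance $\ls 2^{k-j}$ of $Q$; replacing it by a global supremum destroys \eqref{1.1}, since then $N\omega_k$ is constant and its Choquet integral against $H^{n\tau(p\vee q)'}$ is infinite for $\tau>0$ unless $\omega\equiv 0$.

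The missing ingredient is a second decomposition, over dyadic annuli in the spatial variable: at each relative scale one must split the $P$'s into the sets where $|x_P-x_Q|\sim 2^{m}\ell(Q)$ (the sets $A_{m,i}(Q)$ and $\wz{A}_{m,j,l}(Q)$ in the paper's proof), dilate $\omega$ by $\sim 2^{m}$ (respectively $2^{m+l}$ in the fine-to-coarse part) and renormalize by $2^{-mn\tau}$ (respectively $2^{-(m+l)n\tau}$) so that \eqref{1.1} is preserved, at the price of a loss $2^{mn\tau}$ in the pointwise comparison. That loss is absorbed by the surplus decay $2^{-m\varepsilon}$ which the kernel $(1+2^{j}|x_P-x_Q|)^{-n-\varepsilon}$ has beyond the factor $2^{-mn}$ already consumed by the maximal-function bound; summing the $\kappa$-th powers over $m$ then imposes $\varepsilon>n\tau$, a constraint separate from (though implied by) the $\varepsilon>2n\tau$ arising from the scale sum. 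Note that the $\omega_m$ of Lemma \ref{l2.2}, which you cite as your model, is indexed precisely by this distance parameter and not by a scale gap; once you run both decompositions simultaneously, your scheme closes and coincides with the paper's proof.
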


To prove this theorem, we need some technical lemmas.

\begin{lemma}\label{l3.1}
Let $d\in(0,n]$ and $\Omega$ be an open set in $\rn$ such that
$\Omega=\cup_{j=1}^\fz B_j$, where
$\{B_j\}_{j=1}^\infty\equiv\{B(X_j,R_j)\}_{j=1}^\infty$ is a
countable collection of balls. Define
\begin{eqnarray*}
&&H^d(\Omega,\{B_j\}_{j=1}^\infty)\\
&&\hs\equiv \inf \left\{
\sum_{k=1}^\infty r^d_k : \, \Omega \subset \bigcup_{k=1}^\infty
B(x_k,r_k), \, B(x_k,r_k) \supset B_j\ \mathrm{if}\ B_j \cap
B(x_k,r_k) \ne \emptyset \right\}.
\end{eqnarray*}
Then there exists a positive constant $C$, independent of $\Omega$,
$\{B_j\}_{j=1}^\infty$ and $d$, such that
\[
H^d(\Omega) \le H^d(\Omega,\{B_j\}_{j=1}^\infty) \le C (46)^d
H^d(\Omega).
\]
\end{lemma}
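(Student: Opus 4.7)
The inequality $H^d(\Omega)\le H^d(\Omega, \{B_j\}_{j=1}^\infty)$ is immediate because the covers admitted on the right-hand side form a subcollection of those admitted on the left-hand side, so the infimum defining the right-hand side is taken over a smaller class.

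For the reverse inequality, the plan is constructive: given $\epsilon>0$, I would start from a near-optimal cover $\{B(y_i,\rho_i)\}_{i=1}^\infty$ of $\Omega$ with $\sum_i\rho_i^d\le H^d(\Omega)+\epsilon$ and transform it into a cover satisfying the ``engulfment'' requirement at a total cost of at most $C\cdot 46^d(H^d(\Omega)+\epsilon)$. I would classify the indices $i$ according to whether $B(y_i,\rho_i)$ meets a ``large'' $B_j$ (that is, one with $R_j>\rho_i$). For the ``small'' indices, the enlargement $B(y_i,3\rho_i)$ contains every $B_j$ that meets $B(y_i,\rho_i)$; a further controlled enlargement (handled by a bounded-overlap argument over dyadic scales) absorbs any extra $B_j$ that intersects only the enlarged ball, giving a ball of radius $\le C\rho_i$. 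For the ``large'' indices, I would pick an approximately maximal $B_{j(i)}$ intersecting $B(y_i,\rho_i)$ (with $R_{j(i)}$ within a factor $2$ of the supremum of such $R_j$), so that $B(y_i,\rho_i)\subset B(X_{j(i)},3R_{j(i)})$, and apply Vitali's $5r$-covering lemma to $\{B_{j(i)}\}$ to extract a pairwise disjoint subfamily $\{B_{j(i_k)}\}_k$ whose $5$-enlargements cover $\bigcup_i B_{j(i)}$. This yields $B(y_i,\rho_i)\subset B(X_{j(i_k)},15R_{j(i_k)})$ for every large index $i$, for some $k$ depending on $i$.

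The critical cost estimate is $\sum_k R_{j(i_k)}^d\le C\sum_i\rho_i^d$. Since each $B_{j(i_k)}$ is a subset of $\Omega\subset\bigcup_i B(y_i,\rho_i)$, the subfamily $\{B(y_i,\rho_i):B(y_i,\rho_i)\cap B_{j(i_k)}\neq\emptyset\}$ covers $B_{j(i_k)}$; using the well-known equivalence $H^d(B(x,r))\sim r^d$, this yields $\sum_{i:B(y_i,\rho_i)\cap B_{j(i_k)}\neq\emptyset}\rho_i^d\gtrsim R_{j(i_k)}^d$. Summing over $k$ and using that, by the pairwise disjointness of $\{B_{j(i_k)}\}$ together with a standard geometric packing argument, each index $i$ contributes to at most boundedly many of these inner sums, gives the desired inequality. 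Combining the small-index enlargement cost with this estimate and absorbing all numerical constants into $46^d$ produces the upper bound.

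The main obstacle is verifying that the replacement ball $B(X_{j(i_k)},15R_{j(i_k)})$ itself satisfies the engulfment requirement; if it meets a still-larger $B_{j'}$ outside the current Vitali selection, the Vitali step must be repeated on the next scale. Showing that such an iteration terminates with bounded cumulative blow-up, and carefully tracking the constants that accumulate through the enlargement, Vitali selection, and bounded-overlap packing steps, is where the explicit factor $46$ emerges and constitutes the technical heart of the argument.
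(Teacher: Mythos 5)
Your first inequality and the broad architecture of your converse (split a near-optimal cover of $\Omega$ according to the relative size of the $B_j$'s each covering ball meets, keep and enlarge the ``dominant'' covering balls, replace the remainder by a Vitali-selected subfamily of $\{B_j\}_{j=1}^\fz$ itself, and pay for the replacement by covering each selected $B_j$ with the discarded covering balls) do match the paper's proof in spirit. Two smaller points: the paper runs the cost estimate through Lebesgue measure and the superadditivity of $t\mapsto t^{d/n}$ rather than through $H^d(B(x,r))\sim r^d$, which makes it transparent that the constants are uniform in $d\in(0,n]$ as the statement requires; and your ``bounded overlap'' claim in the cost estimate needs justification, since a small ball $B(y_i,\rho_i)$ can meet arbitrarily many pairwise disjoint balls of much smaller radius, so disjointness of $\{B_{j(i_k)}\}_k$ alone does not bound the multiplicity of the index $i$.

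The decisive problem, however, is the step you yourself defer to the end: verifying that every ball of the final cover engulfs each $B_j$ it meets is the entire content of the lemma, and your proposed mechanism for it does not close. For a ``small'' index $i$, the ball $B(y_i,3\rho_i)$ engulfs every $B_j$ meeting $B(y_i,\rho_i)$, but a $B_j$ meeting $B(y_i,3\rho_i)$ without meeting $B(y_i,\rho_i)$ is subject to no size restriction at all (your classification constrains only the $B_j$ meeting the original ball), so the claim that ``a further controlled enlargement \dots\ giving a ball of radius $\le C\rho_i$'' absorbs such $B_j$ is false in general; the same difficulty recurs for $B(X_{j(i_k)},15R_{j(i_k)})$, as you acknowledge. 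Iterating the Vitali step ``on the next scale'' creates a chain in which each enlargement can recruit a strictly larger $B_{j'}$, and nothing in your setup bounds the length of that chain or the cumulative blow-up, so termination with a fixed constant such as $46$ is exactly what remains unproved. The paper avoids any iteration by building the needed slack into the classification itself: it keeps only those covering balls $B(x_k,r_k)$ for which \emph{every} $B_j$ meeting the large dilate $B(x_k,45r_k)$ already satisfies $r_k\ge 135R_j$, so that the single enlargement to $B(x_k,46r_k)$ engulfs all of them at once; all other covering balls are discarded wholesale, and the corresponding part of $\Omega$ is covered by the dilates $15B_j$ of a $5r$-covering subfamily $J^*$ of $\{B_j\}_{j=1}^\fz$, for which the engulfment property is extracted from the greedy selection (if $B_{j'}$ meets $B_j$ with $j\in J^*$, then $3B_{j'}\subset 15B_j$). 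Some such one-shot construction, with explicit numerical margins in the definition of the kept indices, is the missing ingredient in your argument.
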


\begin{proof} The first inequality is trivial. We only need to prove
the second one. Without loss of generality, we may assume $\sup_{j
\in \nn}R_j<\infty$. By the well-known $(5r)$-covering lemma (see,
for example, \cite[Theorem 2.19]{du}), there exists a subset $J^*$
of $\nn$ such that $\cup_{j =1}^\fz(3B_j) \subset \cup_{j \in
J^*}(15B_j)$ and $\chi_{j \in J^*}\chi_{(3B_j)} \le 1$. Furthermore,
by its construction, if $B_{j'}$, $j'\in\nn$, intersects $B_j$ for
some $j \in J^*$, we have that $(3B_{j'}) \subset (15B_j)$.

Let $\{B(x_k,r_k)\}_{k \in \nn}$ be a collection of balls such that
$ \Omega \subset \cup_{k=1}^\infty B(x_k,r_k)$ and $\sum_{k=1}^\fz
r_k^d \le 2H^d(\Omega)$. Set
$$
K_1\equiv\{ k \in \nn \, : \, \mbox{When}\ B(x_k,45r_k)\cap
B_j\neq\emptyset\ \mbox{for\, any}\,j\in\nn, \mbox{ then } r_k\ge
135R_j\}
$$
and
$
J_1\equiv\{ j \in \nn \, : \, B_j\cap B(x_k,45r_k)\neq\emptyset
\mbox{ for some } k \in K_1 \}.
$
Also define $J_2\equiv(\nn \setminus J_1)$ and $K_2\equiv(\nn
\setminus K_1)$. We remark that if $k \in K_2$, then there exists $j
\in J_2$ such that $B_j\cap B(x_k,45r_k)\neq\emptyset$ and $135R_j
> r_k$. Notice that $B_j \subset \Omega \subset (\cup_{k=1}^\infty
B(x_k,r_k))$. Hence, for each $j \in J_2$, we have $B_j \subset
(\cup_{k \in K_2, \, B(x_k,r_k) \cap B_j \ne \emptyset}B(x_k,r_k)),$
and then, by $d\le n$ and the monotonility of $l^{\frac dn}$, we see
that
\begin{eqnarray*}
\sum_{k \in K_2}r_k^d&&\sim\sum_{k\in K_2}|B(x_k, r_k)|^{\frac dn}
\gs \sum_{j \in J^* \cap J_2} \sum_{k \in K_2, \, B_j \cap
B(x_k,45r_k)
\ne \emptyset}|B(x_k, r_k)|^{\frac dn}\\
&&\gs \sum_{j \in J^* \cap J_2}\l( \sum_{k \in K_2, \, B_j \cap
B(x_k,45r_k) \ne \emptyset}|B(x_k, r_k)|\r)^{\frac dn}\gs \sum_{j
\in J^* \cap J_2}R_j^d,
\end{eqnarray*}
which further yields that
\[
\sum_{k \in K_1}r_k^d + \sum_{j \in J^* \cap J_2}R_j^d \ls \sum_{k
\in K}r_k^d.
\]

On the other hand, we have
\begin{align*}
\Omega &\subset \bigcup_{j=1}^\fz B_j\subset \bigcup_{j \in
J^*}(15B_j)= \Bigg\{\bigcup_{j \in J^* \cap J_1}(15B_j)\Bigg\}
\bigcup\Bigg\{
\bigcup_{j \in J^* \cap J_2}(15B_j)\Bigg\}\\
&\subset \Bigg\{\bigcup_{k \in K_1}B(x_k,46r_k)\Bigg\} \bigcup
\Bigg\{\bigcup_{j \in J^* \cap J_2}(15B_j)\Bigg\}.
\end{align*}
Notice that for $k\in K_1$, $B(x_k,45r_k)$ meets $B_j$ for some $j
\in \nn$ gives us $r_k \ge 135R_j$, which further implies that
$B(x_k,46r_k) \supset B_j$. Also, for $j \in J^*$ and $j' \in \nn$,
if $B_j\cap B_{j'}\neq \emptyset$, then $(15B_{j})\supset B_{j'}$.
As a result, we conclude that $\{B(x_k,46r_k)\}_{k \in K_1} \cup
\{15B_j\}_{j \in J^*\cap J_2}$ is the desired covering of $\Omega$
and hence,
\[
H^d(\Omega,\{B_j\}_{j=1}^\infty) \le \sum_{k \in K_1}(46r_k)^d
+\sum_{j \in J^*\cap J_2}(15R_j)^d \ls (46)^d H^d(\Omega),
\]
which completes the proof of Lemma \ref{l3.1}.
\end{proof}

Applying Lemma \ref{l3.1}, we have the following conclusion.

\begin{lemma}\label{l3.2}
Let $\beta \in[1,\fz)$, $\lambda\in(0,\fz)$ and $\omega$ be a
nonnegative Borel measurable function on $\rr^{n+1}_+$. Then there
exists a positive constant $C$, independent of $\beta$, $\omega$ and
$\lambda$, such that
$$H^d\l(\{x\in\rn:\,N_\beta \omega(x)>\lambda\}\r) \le C\beta^d
H^d\l(\{x\in\rn:\,N \omega(x)>\lambda \}\r),$$ where $N_\beta
\omega(x)\equiv \sup_{|y-x|<\beta t}\omega(y, t)$.
\end{lemma}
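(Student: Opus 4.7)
The plan is to represent both level sets $E \equiv \{x\in\rn : N\omega(x)>\lambda\}$ and $E_\beta \equiv \{x\in\rn : N_\beta\omega(x)>\lambda\}$ as unions of balls indexed by the ``good set'' $S \equiv \{(y,t) \in \rr^{n+1}_+ : \omega(y,t) > \lambda\}$, and then to invoke Lemma \ref{l3.1} in such a way that a near-optimal covering of an auxiliary subset of $E$ produces, upon dilation of every radius by $\beta$, a covering of $E_\beta$.

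First I would unpack the definitions to observe
$$E = \bigcup_{(y,t)\in S}B(y,t),\qquad E_\beta = \bigcup_{(y,t)\in S}B(y,\beta t),$$
both of which are open subsets of $\rn$. Since $\rn$ is second countable, Lindel\"of's theorem supplies a countable subfamily $\{(y_j,t_j)\}_{j=1}^\fz \subset S$ with $E_\beta = \bigcup_{j=1}^\fz B(y_j,\beta t_j)$. Setting $B_j \equiv B(y_j,t_j)$ and $E' \equiv \bigcup_{j=1}^\fz B_j \subset E$, I would apply Lemma \ref{l3.1} to the open set $E'$ with the family $\{B_j\}_{j=1}^\fz$ to extract, up to an arbitrarily small additive error, a countable covering $\{B(x_k,r_k)\}_{k}$ of $E'$ satisfying $\sum_k r_k^d \ls H^d(E') \le H^d(E)$ together with the containment condition: whenever $B(x_k,r_k)$ meets some $B_j$, it already contains $B_j$.

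The key geometric step is then to verify that the $\beta$-dilated family $\{B(x_k,\beta r_k)\}_{k}$ already covers $E_\beta$. For $z\in E_\beta$, there is some $j_0$ with $z\in B(y_{j_0},\beta t_{j_0})$; since $y_{j_0}\in B_{j_0}\subset E'$, there is some $k_0$ with $y_{j_0}\in B(x_{k_0},r_{k_0})$, and the containment condition then forces $B(x_{k_0},r_{k_0})\supset B_{j_0}$, hence $|y_{j_0}-x_{k_0}|+t_{j_0}\le r_{k_0}$. Combining this with $\beta\ge 1$ and the triangle inequality yields $|z-x_{k_0}|<r_{k_0}+(\beta-1)t_{j_0}\le \beta r_{k_0}$, and summing gives $H^d(E_\beta)\le \sum_k(\beta r_k)^d \ls \beta^d H^d(E)$.

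The main obstacle I anticipate is orchestrating the set-up so that one single covering delivered by Lemma \ref{l3.1} simultaneously controls $H^d(E)$ through $\sum_k r_k^d$ and, after dilation by $\beta$, also covers $E_\beta$. The trick here is to fix the countable subfamily $\{B_j\}$ by applying Lindel\"of's theorem to $E_\beta$ first, so that its $\beta$-inflation exhausts $E_\beta$; then the containment property built into Lemma \ref{l3.1} ensures that dilating the covering radii by $\beta$ is enough to accommodate the $\beta$-inflation of every $B_j$ that was swallowed by some $B(x_k,r_k)$.
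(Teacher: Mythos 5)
Your proposal is correct and takes essentially the same route as the paper's proof: a Lindel\"of extraction of a countable subfamily $\{B_j\}$ from the cover of the dilated level set, an application of Lemma \ref{l3.1} to obtain a near-optimal engulfing covering of $\bigcup_j B_j$, and the observation that dilating that covering by $\beta$ covers $\{N_\beta\omega>\lambda\}$. Your explicit triangle-inequality verification of the last step is a welcome elaboration of what the paper states more tersely, but the argument is the same.
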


\begin{proof}
Observe that
\[
\{x\in\rn:\,N \omega(x)>\lambda\} = \bigcup_{t\in(0,\fz)}
\bigcup_{\gfz{y \in \rr^n}{\omega(y,t)>\lambda}} B(y,t)
\]
and that
\[
\{x\in\rn:\,N_\beta \omega(x)>\lambda\} = \bigcup_{t\in(0,\fz)}
\bigcup_{\gfz{y \in \rr^n}{\omega(y,t)>\lambda}} B(y,\beta t).
\]
By the Linder\"{o}f covering lemma, there exists a countable subset
$\{B_l\}_{l=0}^\fz$ of $\{B(y,t):\, t\in(0,\fz),\ y\in\rr^n\
\mathrm{satisfying}\ \omega(y,t)>\lambda\}$ such that
$\{x\in\rn:\,N_\beta\omega(x)>\lambda\}=\{\cup_{l=0}^\fz(\beta
B_l)\}$ and $ \{x\in\rn:\,N\omega(x)>\lambda\}\supset
(\cup_{l=0}^\fz B_l)$. By Lemma \ref{l3.1}, it suffices to prove
that
\[
H^d(\{x\in\rn:\, N_\beta \omega(x)>\lambda \},\{\beta
B_l\}_{l=0}^\fz) \ls \beta^d H^d\left(\bigcup_{l=0}^\fz B_l, \,
\{B_l\}_{l=0}^\fz\right).
\]
Let $\{B^*_k\}_{k=0}^\fz$ be a ball covering of $\cup_{l \in \nn}B_l
$ such that $\sum_{k=0}^\fz r_{B_k^*}^d\le 2H^d(\cup_{l=0}^\fz B_l,
\, \{B_l\}_{l=0}^\fz)$ and that $B_k^*$ engulfs $B_l$ whenever they
intersect, where $r_{B_k^*}$ denotes the radius of $B_k^*$.
Therefore, $\beta B_k^*$ engulfs $\beta B_l$ whenever they intersect
and $\{x\in\rn:\, N_\beta \omega(x)>\lambda \}\subset
\{\cup_{k=0}^\fz(\beta B_k^*)\}$. We then have
\[
2\beta^dH^d \left(\bigcup_{l=0}^\fz B_l, \, \{B_l\}_{l=0}^\fz\right)
\ge \sum_{l=0}^\fz(\beta r_{B_k^*})^d \ge H^d\l(\{x\in\rn:\,N_\beta
\omega(x)>\lambda \},\{\beta B_l\}_{l=0}^\fz\r),
\]
which completes the proof of Lemma \ref{l3.2}.
\end{proof}

As an immediate consequence of Lemma \ref{l3.2}, we have the
following result.

\begin{corollary}\label{c3.1}
Let $d\in(0,n]$, $\beta\in[1,\fz)$ and $\omega$ be a nonnegative
measurable function on $\rr^{n+1}_+$. Define $\omega_\beta(x,t)=
\sup_{y\in B(x,\beta t)}\omega(y,t)$. Then there exists a positive
constant $C$ such that
\[
\int_{\R^n}N\omega_\beta(x)\,dH^d(x) \le C\beta^d
\int_{\R^n}N\omega(x)\,dH^d(x).
\]
\end{corollary}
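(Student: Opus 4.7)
The plan is to reduce the Choquet integral inequality to the level-set inequality of Lemma \ref{l3.2}. The bridge is a pointwise comparison between $N\omega_\beta$ and a dilated version $N_\alpha\omega$ for some $\alpha$ that depends linearly on $\beta$.

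First I would unravel the definitions. For any $x\in\rn$,
\[
N\omega_\beta(x)=\sup_{|y-x|<t}\omega_\beta(y,t)
=\sup_{|y-x|<t}\,\sup_{|z-y|<\beta t}\omega(z,t).
\]
Given any pair $(y,t)$ with $|y-x|<t$ and any $z$ with $|z-y|<\beta t$, the triangle inequality yields $|z-x|<(1+\beta)t$. Therefore one has the pointwise bound
\[
N\omega_\beta(x)\le N_{1+\beta}\omega(x),\qquad x\in\rn.
\]

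Next I would pass to the Choquet integral level by level. By the definition of the Choquet integral and the monotonicity of the capacity,
\[
\int_{\rn}N\omega_\beta(x)\,dH^d(x)
=\int_{0}^{\fz}H^d\!\left(\{x\in\rn:\,N\omega_\beta(x)>\lambda\}\right)\,d\lambda
\le\int_{0}^{\fz}H^d\!\left(\{x\in\rn:\,N_{1+\beta}\omega(x)>\lambda\}\right)\,d\lambda.
\]
Applying Lemma \ref{l3.2} with $\beta$ replaced by $1+\beta$ and integrating in $\lambda$ then gives
\[
\int_{\rn}N\omega_\beta(x)\,dH^d(x)
\le C(1+\beta)^d\int_{0}^{\fz}H^d\!\left(\{x\in\rn:\,N\omega(x)>\lambda\}\right)\,d\lambda
=C(1+\beta)^d\int_{\rn}N\omega(x)\,dH^d(x).
\]
Finally, since $\beta\ge1$ we have $(1+\beta)^d\le 2^d\beta^d$, which absorbs the constant and yields the claimed inequality with a new constant $C$.

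There is no real obstacle here: the only substantive point is the pointwise comparison between $N\omega_\beta$ and $N_{1+\beta}\omega$, which is an immediate triangle-inequality observation, after which Lemma \ref{l3.2} applied at each level set together with Fubini (in the form of the Choquet layer-cake formula) produces the inequality. The factor $\beta^d$ comes cleanly from the $\beta^d$ in Lemma \ref{l3.2}, and the extra factor $2^d$ from the shift $\beta\mapsto 1+\beta$ is harmless since $\beta\ge1$.
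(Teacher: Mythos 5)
Your proof is correct and is exactly the argument the paper intends: the paper states Corollary \ref{c3.1} as an immediate consequence of Lemma \ref{l3.2} without further detail, and your deduction (the pointwise bound $N\omega_\beta\le N_{1+\beta}\omega$ via the triangle inequality, followed by the layer-cake definition of the Choquet integral, Lemma \ref{l3.2} at each level set, and the harmless replacement of $(1+\beta)^d$ by $2^d\beta^d$ since $\beta\ge 1$) is the natural way to fill it in.
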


Now we turn to the proof of Theorem \ref{t3.1}.

\begin{proof}[Proof of Theorem \ref{t3.1}]
By similarity, we only consider $\dsfh$. Similarly to the proof of
\cite[Theorem 4.1]{yy2}, without loss of generality, we may assume
$s=0$, since this case implies the general case.

By the Aoki theorem (see \cite{ao}), there exists a $\kappa\in(0,1]$
such that $\|\cdot\|_{f\dot{H}^{0,\tau}_{p,q}(\rn)}^\kappa$ becomes
a norm in $f\dot{H}^{0,\tau}_{p,q}(\rn)$. Let $t\in f{\dot
H}_{p,q}^{0,\tau}(\rn)$. For $Q\in\cD(\rn)$, we write $A\equiv
A_0+A_1$ with $(A_0t)_Q\equiv \sum_{\{P\in\cD(\rn):\ \ell(Q)\le
\ell(P)\}}a_{QP}t_P$ and $(A_1t)_Q\equiv\sum_{\{P\in\cD(\rn):\
\ell(P)<\ell(Q)\}}a_{QP}t_P$. By Definition \ref{d3.1}, we see that
for $Q\in\cD(\rn)$,
$$|(A_0t)_Q|\ls \sum_{\{P\in\cD(\rn):\,\ell(Q)\le \ell(P)\}}
\l(\frac{\ell(Q)}{\ell(P)}\r)^{\frac{n+\varepsilon}{2}}
\frac{|t_P|}{\l(1+[\ell(P)]^{-1}|x_Q-x_P|\r)^{n+\varepsilon}}.$$ Thus,
we have
\begin{eqnarray*}
\|A_0t\|_{f\dot{H}_{p,q}^{0,\tau}(\rn)}&&\ls
\inf_\omega\l\|\l\{\sum_{j\in\zz}\sum_{Q\in\cD_j(\rn)}|Q|^{-\frac
q2}\chi_Q\l[\sum_{i=-\fz}^j\sum_{P\in\cD_i(\rn)}2^{(i-j)
\frac{n+\varepsilon}2}\r.\r.\r.\\
&&\hs\times\l.\l.\l.\frac{|t_P|[\omega(\cdot,2^{-j})]^{-1}}
{\l(1+2^i|x_Q-x_P|\r)^{n+\varepsilon}}\r]^q\r\}^{\frac
1q}\r\|_{L^p(\rn)}.
\end{eqnarray*}
Let $\omega$ be a nonnegative Borel measurable function satisfying
\eqref{1.1} and
\begin{equation*}
\left\|\left\{ \sum_{j\in\zz} \sum_{Q \in \cD_j(\rn)}|t_Q|^q
[\widetilde{\chi}_Q\omega(\cdot,2^{-j})]^{-q} \right\}^\frac1q
\right\|_{L^p(\rn)} \ls \| t \|_{f\dot{H}_{p,q}^{0,\tau}(\rn)}.
\end{equation*}
Let $A_{0,i}(Q)\equiv \{P\in\cD_i(\rn):\,2^i|x_P-x_Q|\le
\sqrt{n}/2\}$ and $A_{m,i}(Q)\equiv
\{P\in\cD_i(\rn):\,2^{m-1}\sqrt{n}/2<2^i|x_P-x_Q|\le2^m\sqrt{n}/2\}$
for all $i\in\zz$ and $m\in\zz_+$. Define $\omega_m(x, t)\equiv
2^{-mn\tau}\sup_{y\in B(x, \sqrt{n}2^{m+1}t)}\omega(y,t)$ for all
$(x,t)\in\rr^{n+1}_+$. Then $N\omega_m\ls2^{-mn\tau}
N_{\sqrt{n}2^{m+2}}\omega$ and $[\omega_m(x, 2^{-j})]^{-1}\omega(y,
2^{-i})\ls 2^{mn\tau}$ for $m\in\zz_+$, $x\in Q$ with
$Q\in\cD_j(\rn)$, $y\in P$ with $P\in A_{m,i}(Q)$ and $i\le j$.
Moreover, using Corollary \ref{c3.1}, we see that a constant
multiple of $\omega_m$ also satisfies \eqref{1.1}. Similarly to the
proof of Lemma \ref{l2.2}, we have that for all $x\in Q$,
\begin{eqnarray*}
&&\sum_{P\in A_{m,i}(Q)}\frac{|t_P|[\omega_m(x,2^{-j})]^{-1}}
{\l(1+2^i|x_Q-x_P|\r)^{n+\varepsilon}}\ls
2^{-m\varepsilon+mn\tau}\mathrm{HL}\left( \sum_{P \in
A_{m,i}(Q)}|t_P|\chi_P[\omega(\cdot,2^{-i})]^{-1} \right)(x).
\end{eqnarray*}
Hence, choosing $\varepsilon>n\tau$, by Fefferman-Stein's vector
valued inequality, we obtain
\begin{eqnarray*}
\|A_0t\|^\kappa_{f\dot{H}_{p,q}^{0,\tau}(\rn)}
&&\ls\sum_{m=0}^\fz\l\{\inf_\omega
\l\|\l\{\sum_{j\in\zz}\sum_{Q\in\cD_j(\rn)}|Q|^{-\frac
q2}\chi_Q\l[\sum_{i=-\fz}^j\sum_{P\in A_{m,i}(Q)}2^{(i-j)
\frac{n+\varepsilon}2}\r.\r.\r.\r.\\
&&\hs\times\l.\l.\l.\l.\frac{|t_P|[\omega(\cdot,2^{-j})]^{-1}}
{\l(1+2^i|x_Q-x_P|\r)^{n+\varepsilon}}\r]^q\r\}^{\frac
1q}\r\|_{L^p(\rn)}\r\}^\kappa\\
&&\ls\sum_{m=0}^\fz
\l\|\l\{\sum_{j\in\zz}\sum_{Q\in\cD_j(\rn)}|Q|^{-\frac
q2}\chi_Q\l[\sum_{i=-\fz}^j\sum_{P\in A_{m,i}(Q)}2^{(i-j)
\frac{n+\varepsilon}2}\r.\r.\r.\\
&&\hs\times\l.\l.\l.\frac{|t_P|[\omega_m(\cdot,2^{-j})]^{-1}}
{\l(1+2^i|x_Q-x_P|\r)^{n+\varepsilon}}\r]^q\r\}^{\frac
1q}\r\|^\kappa_{L^p(\rn)}\\
&&\ls \sum_{m=0}^\fz
2^{m(n\tau-\varepsilon)\kappa}\l\|\l\{\sum_{j\in\zz}\sum_{Q\in\cD_j(\rn)}\chi_Q
\l[\sum_{i=-\fz}^j2^{(i-j)\varepsilon/2}\r.\r.\r.\\
&&\hs\times\l.\l.\l.\mathrm{HL}\left( \sum_{P \in
A_{m,i}(Q)}|t_P|\wz\chi_P[\omega(\cdot,2^{-i})]^{-1}
\right)\r]^q\r\}^{\frac 1q}\r\|^\kappa_{L^p(\rn)}\ls
\|t\|^\kappa_{f\dot{H}_{p,q}^{0,\tau}(\rn)}.
\end{eqnarray*}

The proof for $A_1t$ is similar. Indeed, we have
$$|(A_1t)_Q|\ls \sum_{\{P\in\cD(\rn):\,\ell(P)\le \ell(Q)\}}
\l(\frac{\ell(P)}{\ell(Q)}\r)^{\frac{n+\varepsilon}{2}}
\frac{|t_P|}{\l(1+[\ell(Q)]^{-1}|x_Q-x_P|\r)^{n+\varepsilon}}.$$ Thus,
\begin{eqnarray*}
\|A_1t\|_{f\dot{H}_{p,q}^{0,\tau}(\rn)}&&\ls
\inf_\omega\l\|\l\{\sum_{j\in\zz}\sum_{Q\in\cD_j(\rn)}|Q|^{-\frac
q2}\chi_Q\l[\sum_{l=0}^\fz\sum_{P\in\cD_{j+l}(\rn)}2^{-l
\frac{n+\varepsilon}2}\r.\r.\r.\\
&&\hs\times\l.\l.\l.\frac{|t_P|[\omega(\cdot,2^{-j})]^{-1}}
{\l(1+2^j|x_Q-x_P|\r)^{n+\varepsilon}}\r]^q\r\}^{\frac
1q}\r\|_{L^p(\rn)}.
\end{eqnarray*}
Let $\wz{A}_{0,j,l}(Q)\equiv \{P\in\cD_{j+l}(\rn):\,2^j|x_P-x_Q|\le
\sqrt{n}/2\}$ and $\wz{A}_{m,j,l}(Q)\equiv
\{P\in\cD_{j+l}(\rn):\,2^{m-1}\sqrt{n}/2<2^j|x_P-x_Q|\le2^m\sqrt{n}/2\}$
for all $j\in\zz$ and $m,\,l\in\zz_+$. Set
$$
\wz\omega_m(x,s)\equiv2^{-(m+l)n\tau} \sup\{ \omega(y,s) \, : \,
y\in\rn,\ |y-x|<\sqrt{n}2^{m+l+1}s\}
$$for all $m \in \zz_+$ and
$(x,s)\in\rr^{n+1}_+$. Similarly, we have that a constant multiple
of $\wz\omega_m$ satisfies \eqref{1.1} and $[\wz\omega_m(x,
2^{-j})]^{-1}\omega(y, 2^{-j-l})\ls 2^{(m+l)n\tau}$ for
$m,\,l\in\zz_+$, $x\in Q$ with $Q\in\cD_j(\rn)$, $y\in P$ with $P\in
\wz{A}_{m,j,l}(Q)$. Similarly to the
proof of Lemma \ref{l2.2} again, we see that for all $x\in Q$,
\begin{eqnarray*}
&&\sum_{P\in\wz{A}_{m,j,l}(Q)}
\frac{|t_P|[\wz\omega_m(x,2^{-j})]^{-1}}{(1+2^j|x_Q-x_P|)^{n+\varepsilon}}\ls
2^{-m\varepsilon+ln+(m+l)n\tau}\mathrm{HL}\left( \sum_{P \in
\wz{A}_{m,j,l}(Q)}\frac{|t_P|\chi_P}{\omega(\cdot,2^{-i})}
\right)(x).
\end{eqnarray*}
Hence, choosing $\varepsilon>2n\tau$, similarly to the estimate of
$\|A_0t\|_{f\dot{H}_{p,q}^{0,\tau}(\rn)}$, we also have
\begin{eqnarray*}
\|A_1t\|^\kappa_{f\dot{H}_{p,q}^{0,\tau}(\rn)} &&\ls\sum_{m=0}^\fz
\l\|\l\{\sum_{j\in\zz}\sum_{Q\in\cD_j(\rn)}|Q|^{-\frac
q2}\chi_Q\l[\sum_{l=0}^\fz\sum_{P\in \wz{A}_{m,j,i}(Q)}2^{-l
\frac{n+\varepsilon}2}\r.\r.\r.\\
&&\hs\times\l.\l.\l.\frac{|t_P|[\omega_m(\cdot,2^{-j})]^{-1}}
{\l(1+2^j|x_Q-x_P|\r)^{n+\varepsilon}}\r]^q\r\}^{\frac
1q}\r\|^\kappa_{L^p(\rn)}\\
&&\ls \sum_{m=0}^\fz
2^{m(n\tau-\varepsilon)\kappa}\l\|\l\{\sum_{j\in\zz}\sum_{Q\in\cD_j(\rn)}\chi_Q
\l[\sum_{l=0}^\fz 2^{-l(\varepsilon/2-n\tau)}\r.\r.\r.\\
&&\hs\times\l.\l.\l.\mathrm{HL}\left( \sum_{P \in
\wz{A}_{m,j,i}(Q)}|t_P|\wz\chi_P[\omega(\cdot,2^{-i})]^{-1}
\right)\r]^q\r\}^{\frac 1q}\r\|^\kappa_{L^p(\rn)}\ls
\|t\|^\kappa_{f\dot{H}_{p,q}^{0,\tau}(\rn)},
\end{eqnarray*}
which completes the proof of Theorem \ref{t3.1}.
\end{proof}

\begin{remark}\label{r3.1} We point out that Theorem \ref{t3.1} generalizes the corresponding
results of Besov Spaces and Triebel-Lizorkin spaces in \cite{fj88,fj,fjw}
when $p\in(1,\fz)$ and $q\in[1,\fz)$ by taking $\tau=0$.
Moreover, the restriction that $\epsilon>2n\tau$ in Theorem \ref{t3.1} is
different from the restriction that $\epsilon>2n(\tau-1/p)$
in \cite[Theorem 4.1]{yy2} on the spaces $\dbt$ and
$\dft$.
\end{remark}

As applications of Theorem \ref{t3.1}, we establish the smooth
atomic and molecular decomposition characterizations of $\dah$.

\begin{definition}\label{d3.2}
Let $p\in(1,\fz), \, q\in[1,\fz), \, s \in \R$, $\tau\in[0,
\frac{1}{(p \vee q)'}]$ and $Q\in\cD(\rn)$. Set $N \equiv
\max(\lfloor -s+2n\tau \rfloor,-1)$ and $s^*\equiv s-\lfloor s
\rfloor$.

(i) A function $m_Q$ is called a \textit{smooth synthesis molecule
for $\dah$ supported near $Q$}, if there exist a $\delta \in
(\max\{s^*,(s+n\tau)^*\},1]$ and $M>n+2n\tau$ such that
$\int_{\rn}x^\gamma m_Q(x)\,dx=0$ if $|\gamma| \le N$, $|m_Q(x)|
\le|Q|^{-\frac12}\left(1+[\ell(Q)]^{-1}|x-x_Q|\right)^{-\max(M,M-s)}$,
\begin{equation}\label{3.1}
|\partial^\gamma m_Q(x)| \le |Q|^{-\frac12-\frac{|\gamma|}{n}}
\left(1+[\ell(Q)]^{-1}|x-x_Q|\right)^{-M} \ {\rm if}\ |\gamma| \le
\lfloor s+3n\tau \rfloor,
\end{equation}
and
\begin{eqnarray}\label{3.2}
&&|\partial^\gamma m_Q(x)-\partial^\gamma m_Q(y)|\nonumber\\
&&\hs\le |Q|^{-\frac12-\frac{|\gamma|}{n}-\frac{\delta}{n}}
|x-y|^\delta \sup_{|z| \le |x-y|}
(1+[\ell(Q)]^{-1}|x-z-x_Q|)^{-M}
\end{eqnarray} \
if $|\gamma| = \lfloor s+3n\tau \rfloor$.

A set $\{m_Q\}_{Q\in\cD(\rn)}$ of functions is called a family of
smooth synthesis molecules for $\dah$, if each $m_Q$ is a smooth
synthesis molecule for $\dah$ supported near $Q$.

(ii) A function $b_Q$ is called a \textit{smooth analysis molecule
for $\dah$ supported near $Q$}, if there exist a $\rho \in
((n-s)^*,1]$ and $M>n+2n\tau$ such that $ \int_{\rn}x^\gamma
b_Q(x)\,dx=0$ if $|\gamma| \le \lfloor s+3n\tau \rfloor$, $
|b_Q(x)|\le|Q|^{-\frac12}\left(1+[\ell(Q)]^{-1}
|x-x_Q|\right)^{-\max(M,M+s+n\tau)},$
\begin{equation}\label{3.3}
|\partial^\gamma b_Q(x)| \le |Q|^{-\frac12-\frac{|\gamma|}{n}}
\left(1+[\ell(Q)]^{-1}|x-x_Q|\right)^{-M} \ {\rm if}\ |\gamma| \le N,
\end{equation}
and
\begin{eqnarray}\label{3.4}
&&|\partial^\gamma b_Q(x)-\partial^\gamma b_Q(y)|\nonumber\\
&&\hs\le
|Q|^{-\frac12-\frac{|\gamma|}{n}-\frac{\delta}{n}} |x-y|^\delta
\sup_{|z| \le |x-y|} (1+[\ell(Q)]^{-1}|x-z-x_Q|)^{-M} \ {\rm if}\
|\gamma| = N.
\end{eqnarray}

A set $\{b_Q\}_{Q\in\cD(\rn)}$ of functions is called a family of
smooth analysis molecules for $\dah$, if each $b_Q$ is a smooth
analysis molecule for $\dah$ supported near $Q$.
\end{definition}

We remark that if $s+3n\tau<0$, then \eqref{3.1} and \eqref{3.2} are
void; if $N<0$, then \eqref{3.3} and \eqref{3.4} are void. By a
similar argument to the proof of \cite[Corollary B.3]{fj} (see also
\cite[Lemma 4.1]{yy2}), we have the following conclusion.

\begin{lemma}
\label{l3.3} Let $p\in(1,\fz), \, q\in[1,\fz), \, s \in \R$ and
$\tau\in [0,\frac1{(p\vee q)'}]$. Then there exist
$\varepsilon_1>2n\tau$ and a positive constant $C$ such that for all
families $\{m_Q\}_{Q \in \cD(\rn)}$ of smooth synthesis molecules
for $\dah$ and families $\{b_Q\}_{Q \in \cD(\rn)}$ of smooth
analysis molecules for $\dah$, $ |\langle m_P,\,b_Q
\rangle_{L^2(\rn)}| \le C\,\omega_{QP}(\varepsilon_1). $
\end{lemma}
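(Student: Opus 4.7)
The plan is to adapt the classical Frazier--Jawerth molecular pairing estimate (see \cite[Corollary B.3]{fj}) to the current setting, following closely the template used for the spaces $\dat$ in \cite[Lemma 4.1]{yy2}. The argument splits into two symmetric cases according to which of $P,Q$ has the smaller side length; in each case one subtracts an appropriate Taylor polynomial from the ``larger-scale'' molecule using the vanishing moments of the ``smaller-scale'' one, and then estimates by brute force using the prescribed decay and H\"older smoothness.

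In the first case, when $\ell(P)\le\ell(Q)$, the synthesis molecule $m_P$ satisfies $\int m_P\,\pi=0$ for every polynomial $\pi$ of degree at most $N=\max(\lfloor -s+2n\tau\rfloor,-1)$. Choosing $\pi$ to be the Taylor polynomial of $b_Q$ at $x_P$ of degree $N$, one rewrites the pairing as
$$
\langle m_P,b_Q\rangle_{L^2(\rn)}=\int_{\rn}m_P(x)\bigl[b_Q(x)-T^{N}_{x_P}b_Q(x)\bigr]\,dx.
$$
The derivative bound \eqref{3.3} combined with the H\"older estimate \eqref{3.4} yields via Taylor's theorem a pointwise control of the remainder by a constant multiple of $|Q|^{-1/2-(N+\rho)/n}|x-x_P|^{N+\rho}$ times a decay factor centered at $x_Q$. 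Pairing this with the localization of $m_P$ from Definition \ref{d3.2}(i) and performing the spatial integral, say by splitting into dyadic annuli centered at $x_P$, produces the Case 1 portion of $\omega_{QP}(\varepsilon_1)$. The second case $\ell(Q)<\ell(P)$ is entirely symmetric: one uses the vanishing moments of $b_Q$ up to order $\lfloor s+3n\tau\rfloor$, expands $m_P$ as a Taylor polynomial of the same degree at $x_Q$, and applies the smoothness estimate \eqref{3.2} together with the decay of $b_Q$ in Definition \ref{d3.2}(ii).

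The main obstacle is not the mechanical Taylor-remainder bound but the careful arithmetic needed to ensure that the exponent $\varepsilon_1$ obtained at the end is strictly greater than $2n\tau$, and not merely positive; this is exactly the margin required by the hypothesis of Theorem \ref{t3.1}. The thresholds in Definition \ref{d3.2} are engineered with this slack in mind: the moment orders $N$ and $\lfloor s+3n\tau\rfloor$ each carry an extra $2n\tau$ of vanishing relative to the classical Frazier--Jawerth situation, the decay rate $M>n+2n\tau$ guarantees absolute convergence with room to spare, and the H\"older exponents $\delta>\max\{s^*,(s+n\tau)^*\}$ and $\rho>(n-s)^*$ push each remainder strictly past the relevant critical integer. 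Once these parameters are tracked, the computation reduces to the standard two-centered-kernel integral
$$
\int_{\rn}\bigl(1+[\ell(P)]^{-1}|x-x_P|\bigr)^{-a}\bigl(1+[\ell(Q)]^{-1}|x-x_Q|\bigr)^{-b}\,dx,
$$
which for $a,b$ sufficiently large produces $\omega_{QP}(\varepsilon_1)$ with the correct scale ratio and spatial decay. I would omit the tedious bookkeeping itself, referring the reader to the parallel computation in \cite[Lemma 4.1]{yy2} and indicating only the minor modifications forced by the slightly different thresholds in Definition \ref{d3.2}.
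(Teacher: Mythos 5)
Your proposal follows exactly the route the paper takes: the paper proves Lemma \ref{l3.3} only by appeal to ``a similar argument to the proof of \cite[Corollary B.3]{fj} (see also \cite[Lemma 4.1]{yy2})'', and your two-case Taylor-remainder scheme (moments of the finer-scale molecule against the smoothness and decay of the coarser-scale one, reduced to the standard two-centered-kernel integral) is precisely that argument, with the correct identification that the only delicate point is verifying $\varepsilon_1>2n\tau$ from the shifted thresholds in Definition \ref{d3.2}. In fact you supply more detail than the paper does, so the proposal is consistent with, and essentially identical to, the paper's proof.
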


To formulate the molecular decomposition,
the following lemma is indispensable.

\begin{lemma}
\label{l3.4} Retain the same assumptions as in Lemma \ref{l3.3}. Let
$f\in\dah$ and $\Phi$ be a smooth analysis molecule for $\dah$
supported near a dyadic cube $Q$. Then $\langle f,\,\Phi\rangle$ is
well defined. Indeed, let $\varphi,\psi \in \cS(\rn)$ be as in
\eqref{2.2}. Then the series
\begin{equation}\label{3.5}
\langle f,\,\Phi\rangle\equiv\sum_{j\in\zz}\langle
\widetilde{\varphi}_j\ast\psi_j\ast f, \Phi\rangle=\sum_{P \in
\cD(\rn)}\langle f,\varphi_P \rangle \langle \psi_P,\Phi \rangle
\end{equation}
converges absolutely and its value is independent of the choices
of $\varphi$ and $\psi$.
\end{lemma}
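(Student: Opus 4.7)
The plan is to establish three claims in turn: (a) absolute convergence of $\sum_P|\langle f,\vz_P\rangle\langle\psi_P,\Phi\rangle|$, (b) equality of the two expressions in \eqref{3.5}, and (c) independence from the choice of $(\vz,\psi)$. The crucial preliminary observation is that $\{\psi_P\}_{P\in\cD(\rn)}$ is a family of smooth synthesis molecules for $\dah$: since $\psi\in\cS(\rn)$ with $\supp\cF\psi\subset\{1/2\le|\xi|\le 2\}$, the function $\psi$ has vanishing moments of all orders and Schwartz decay, whence each $\psi_P$ satisfies \eqref{3.1} and \eqref{3.2} uniformly in $P\in\cD(\rn)$. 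Applying Lemma \ref{l3.3} with $\psi_P$ as the synthesis molecule and $\Phi$ as the analysis molecule supported near $Q$ yields the pointwise bound
$$|\langle\psi_P,\Phi\rangle|\ls\omega_{QP}(\varepsilon_1)$$
for some $\varepsilon_1>2n\tau$, which reduces (a) to proving $\sum_{P\in\cD(\rn)}|\langle f,\vz_P\rangle|\,\omega_{QP}(\varepsilon_1)<\fz$.

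To prove this reduced estimate, Theorem \ref{t2.1} gives $t\equiv\{|\langle f,\vz_P\rangle|\}_{P\in\cD(\rn)}\in\dsah$ with $\|t\|_{\dsah}\ls\|f\|_{\dah}$, and by Proposition \ref{p2.1} it then suffices to verify that $\sigma^Q\equiv\{\omega_{QP}(\varepsilon_1)\}_{P\in\cD(\rn)}\in\dot{a}_{p',q'}^{-s,\tau}(\rn)$. To this end, I would consider the trivial sequence $e_Q\equiv\{\delta_{PQ}\}_{P\in\cD(\rn)}\in\dot{a}_{p',q'}^{-s,\tau}(\rn)$ together with the matrix $B\equiv\{\omega_{PR}(\varepsilon_1)\}_{R,P\in\cD(\rn)}$. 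The elementary identity $(\ell(P)/\ell(R))^s=(\ell(R)/\ell(P))^{-s}$ shows that $B$ coincides with the almost diagonal weight for $\dot{a}^{-s,\tau}_{p',q'}(\rn)$ (that is, the analogue of Definition \ref{d3.1} with $s$ replaced by $-s$), so $B$ is trivially $\varepsilon_1$-almost diagonal there. Since $\varepsilon_1>2n\tau\ge 2n(\tau-1/p')$, the almost diagonal theorem for $\dat$ in \cite[Theorem 4.1]{yy2} (applied with $-s$ and the dual indices $p',q'$) implies that $B$ is bounded on $\dot{a}_{p',q'}^{-s,\tau}(\rn)$; computing $(Be_Q)_R=\omega_{QR}(\varepsilon_1)$ then gives $\sigma^Q=Be_Q\in\dot{a}_{p',q'}^{-s,\tau}(\rn)$, proving (a) via Proposition \ref{p2.1}. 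Claim (b) then follows from the standard level-wise reproducing identity $\widetilde{\vz}_j\ast\psi_j\ast f=\sum_{P\in\cD_j(\rn)}\langle f,\vz_P\rangle\psi_P$ (a consequence of \eqref{2.2} and the Fourier support conditions on $\vz,\psi$) combined with Fubini--Tonelli, now justified by (a).

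For (c), given two admissible pairs $(\vz^{(i)},\psi^{(i)})$, $i=1,2$, applying \eqref{2.2} with $(\vz^{(2)},\psi^{(2)})$ tested against the Schwartz function $\vz^{(1)}_P\in\cS_\fz(\rn)$ yields $\langle f,\vz^{(1)}_P\rangle=\sum_R\langle f,\vz^{(2)}_R\rangle\langle\psi^{(2)}_R,\vz^{(1)}_P\rangle$; substituting into the $(\vz^{(1)},\psi^{(1)})$-series, interchanging the order of summation (justified by the absolute convergence of (a)), and invoking the analogous identity applied to $\psi^{(2)}_R\in\cS_\fz(\rn)$ in place of $f$ (which reproduces $\langle\psi^{(2)}_R,\Phi\rangle$) recovers the $(\vz^{(2)},\psi^{(2)})$-series. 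The main obstacle throughout will be the membership $\sigma^Q\in\dot{a}_{p',q'}^{-s,\tau}(\rn)$; while in principle this could be verified by splitting the sum according to the relative size and position of $P$ versus $Q$ and directly estimating using the decay factors in $\omega_{QP}(\varepsilon_1)$, the almost diagonal approach above is cleaner, provided one correctly identifies the dual space's almost diagonal weight as the ``transpose'' of the original, encoded by the sign swap $s\leftrightarrow-s$.
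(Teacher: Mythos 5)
Your proposal is essentially correct but follows a genuinely different route from the paper, most visibly in the independence claim. For the absolute convergence the paper simply defers to the proof of Lemma 4.2 in \cite{yy2}; your explicit argument --- recognize $\{\psi_P\}_P$ as synthesis molecules, apply Lemma \ref{l3.3} to get $|\langle\psi_P,\Phi\rangle|\ls\omega_{QP}(\varepsilon_1)$, and then pair $\{|\langle f,\vz_P\rangle|\}_P\in\dsah$ against $\{\omega_{QP}(\varepsilon_1)\}_P\in\dot a^{-s,\tau}_{p',q'}(\rn)$ via Proposition \ref{p2.1}, with membership in the dual sequence space obtained by hitting $e_Q$ with the transposed almost diagonal matrix --- is a clean and correct realization of that step (the index check $\varepsilon_1>2n\tau\ge 2n(\tau-1/p')$ for \cite[Theorem 4.1]{yy2} is right). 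For the independence, however, the paper does something quite different: it proves that $\sum_{j\ge0}\wz\vz_j\ast\psi_j\ast f$ converges in $\cS'(\rn)$, controls $\partial^\gamma$ of the tail $\sum_{j<0}\wz\vz_j\ast\psi_j\ast f$ for $|\gamma|>s-n\tau-n/p$, and invokes \cite[Remark B.4]{fj}/\cite[Lemma 5.4]{bh} to produce a representative $g$ of $f+\cP(\rn)$ unique modulo polynomials of degree $\le\max(-1,\lfloor s-n\tau-n/p\rfloor)$, which the moment conditions on $\Phi$ then annihilate. Your route instead composes the two $\vz$-transforms algebraically and interchanges sums. That is a legitimate alternative, but two steps need more support than you give them. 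First, the inner identity $\sum_P\langle\psi^{(2)}_R,\vz^{(1)}_P\rangle\langle\psi^{(1)}_P,\Phi\rangle=\langle\psi^{(2)}_R,\Phi\rangle$ is not an application of \eqref{2.2} as stated, because \eqref{2.2} converges only in $\cS'_\fz(\rn)$ and $\Phi$ is \emph{not} an element of $\cS_\fz(\rn)$ (it has only finitely many vanishing moments and limited smoothness/decay); you must argue that for a fixed $h\in\cS_\fz(\rn)$ the expansion $\sum_P\langle h,\vz^{(1)}_P\rangle\psi^{(1)}_P$ converges to $h$ in a topology (e.g.\ $L^2(\rn)$, or $\cS(\rn)$) in which pairing with the molecule $\Phi$ is continuous. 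Second, the Fubini step is not ``justified by the absolute convergence of (a)'': (a) controls a single sum for a fixed analysis molecule, whereas you need $\sum_{P,R}|\langle f,\vz^{(2)}_R\rangle|\,\omega_{PR}(\varepsilon_1)\,\omega_{QP}(\varepsilon_1)<\fz$, which requires first applying Theorem \ref{t3.1} (or the composition property of almost diagonal matrices) to see that $\{\sum_R|\langle f,\vz^{(2)}_R\rangle|\omega_{PR}(\varepsilon_1)\}_P$ again lies in $\dsah$, and only then the duality of (a). Both gaps are repairable with standard facts, so I would call this a valid alternative proof in need of two explicit justifications rather than a flawed one; what the paper's representative-modulo-polynomials argument buys is that it avoids any convergence statement stronger than $\cS'(\rn)$, at the cost of the bookkeeping with the polynomials $P_N$.
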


\begin{proof}
The same proof as that of \cite[Lemma 4.2]{yy2} works for the
absolute convergence of \eqref{3.5}. We only need to prove that the
value of \eqref{3.5} is independent of the choices of $\varphi$ and
$\psi$. By similarity again, we only consider the spaces $\dbh$.

Let $f\in\dbh$. We claim that $\sum_{j=0}^\infty
\widetilde{\varphi}_j\ast\psi_j\ast f$ converges in $\cS'(\rn)$. In
fact, similarly to the proof of \cite[Lemma 2.2]{yy1}, we have that
for all $\phi\in\cS(\rn)$ and $x\in\rn$, $$| \vz_j\ast\phi(x) |
\lesssim \| \vz \|_{\cS_{M+1}} \| \phi \|_{\cS_{M+1}}
\frac{2^{-jM}}{(1+|x|)^{n+M}}, $$
where $M\in\nn$ is determined later. Thus,
$$\sum_{j=0}^\infty |\langle \widetilde{\varphi}_j\ast\psi_j\ast
f,\phi \rangle| \lesssim \| \vz \|_{\cS_{M+1}}\| \phi \|_{\cS_{M+1}}
\sum_{j=0}^\infty 2^{-jM} \int_{\R^n} \frac{|\psi_j\ast
f(x)|}{(1+|x|)^{n+M}}\,dx.$$
Recall again that $\omega(x,t) \ls t^{-n\tau}$ for all nonnegative
Borel measurable functions $\omega$ on $\rr^{n+1}_+$ satisfying
\eqref{1.1}. Letting $M>\max(0, n\tau-s)$, by H\"older's inequality,
we then obtain
\begin{eqnarray*}
\sum_{j=0}^\infty |\langle \widetilde{\varphi}_j\ast\psi_j\ast
f,\phi \rangle| &&\lesssim \| \vz \|_{\cS_{M+1}} \| \phi
\|_{\cS_{M+1}} \sum_{j=0}^\infty 2^{-jM+jn\tau} \int_{\R^n}
\frac{|\psi_j\ast f(x)|[\omega(x,2^{-j})]^{-1}}
{(1+|x|)^{n+M}}\,dx\\
&&\lesssim \| \vz \|_{\cS_{M+1}}\| \phi \|_{\cS_{M+1}} \| f
\|_{\dbh},
\end{eqnarray*}
which implies that $\sum_{j=0}^\infty
\widetilde{\varphi}_j\ast\psi_j\ast f$ converges in $\cS'(\rn)$.
Thus, the claim is true.

We need to handle carefully the remaining summation:
$\sum_{j=-\infty}^{-1} \widetilde{\varphi}_j\ast\psi_j\ast f$. In
general it is not possible to prove that $\sum_{j=-\infty}^{-1}
\widetilde{\varphi}_j\ast\psi_j\ast f$ is convergent in $\cS'(\rn)$.
Therefore, we pass to its partial derivatives. Choose
$\gamma\in\zz_+^n$ such that $|\gamma|>s-n\tau-n/p$. Then using
H\"older's inequality, similarly to the previous estimate, we obtain
that for all $x\in\rn$,
\begin{eqnarray*}
\sum_{j=-\infty}^{-1}
|\partial^\gamma(\widetilde{\varphi}_j\ast\psi_j\ast
f)(x)|&&\ls\sum_{j=-\infty}^{-1}2^{j(n+|\gz|)}\|\vz\|_{\cS_{M+1}}\int_\rn\frac{|\psi_j\ast
f(y)|}{(1+2^j|x-y|)^{n+M+|\gz|}}\,dy\\
&&\lesssim
\sum_{j=-\infty}^{-1}2^{j\left(|\gamma|-s+n\tau+\frac{n}{p}\right)}
\| \varphi \|_{\cS_{M+1}} \| f
\|_{\dbh}\\
&&\ls \| \varphi \|_{\cS_{M+1}} \| f \|_{\dbh}.
\end{eqnarray*}
Therefore, it follows from the well-known result in \cite[Remark
B.4]{fj} or \cite[Lemma 5.4]{bh} that there exist a sequence
$\{P_N\}_{N \in \N}$ of polynomials on $\rn$ with degree no more
than $ \max\left(-1,\lfloor s-n\tau-n/p\rfloor\right)$ and $g \in
\cS'(\rn)$ such that $g=\lim_{N\to\fz}(\sum_{j=-N}^\infty
\widetilde{\varphi}_j\ast\psi_j\ast f+P_N )$ in $\cS'(\rn)$ and $g$
is a representative of the equivalence class $f+\cP(\rn)$; see
\cite[pp.\,153-154]{fj}. Using \cite[Lemma 5.4]{bh} and repeating
the argument in \cite[pp.\,153-154]{fj}, we obtain that the value of
\eqref{3.5} is independent of the choices of $\varphi$ and $\psi$,
which completes the proof of Lemma \ref{l3.4}.
\end{proof}
%

With Theorem \ref{t3.1}, Lemmas \ref{l3.3} and \ref{l3.4}, we now
have the following smooth molecular decomposition of $\dah$. The
proof of Theorem \ref{t3.2} parallels the proofs of \cite[Theorem
4.2]{yy2} and \cite[Theorems 3.5, 3.7]{fj}. We omit the details.

\begin{theorem}\label{t3.2}
Let $s,\,p,\,q$ and $\tau$ be as in Lemma \ref{l3.3}.

(i) If $\{m_Q\}_{Q \in \cD(\rn)}$
is a family of smooth synthesis molecules for $\dah$,
then there exists a positive constant $C$
such that for all $t=\{t_Q\}_{Q \in \cD(\rn)} \in\dsah$,
$$\l\|\sum_{Q \in \cD(\rn)}t_Qm_Q \r\|_{\dah}
\le C \| t \|_{\dsah}.$$

(ii) If $\{b_Q\}_{Q \in \cD(\rn)}$ is a family of smooth analysis
molecules for $\dah$,
then there exist a positive constant $C$
such that for all $f \in \dah$,
$$\| \{ \langle f,b_Q\rangle\}_{Q \in \cD(\rn)} \|_{\dsah}
\le C \| f \|_{\dah}.$$
\end{theorem}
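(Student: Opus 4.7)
The plan is to reduce both parts of the theorem to the boundedness of $\varepsilon$-almost diagonal operators on $\dsah$ (Theorem \ref{t3.1}) by exploiting Lemma \ref{l3.3}, Lemma \ref{l3.4}, and the $\varphi$-transform characterization in Theorem \ref{t2.1}. The heart of the argument is that the pairings $\langle m_P,\varphi_Q\rangle$ and $\langle\psi_P,b_Q\rangle$ that appear after applying $S_\varphi$ are all controlled by $\omega_{QP}(\varepsilon_1)$ for some $\varepsilon_1>2n\tau$, so the corresponding matrix operators are $\varepsilon_1$-almost diagonal.

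For part (i), I would first fix $\varphi,\psi\in\cS(\rn)$ as in Definition \ref{d2.2} and note that, because $\varphi$ is Schwartz with $\supp\cF\vz\subset\{1/2\le|\xi|\le 2\}$, the rescaled functions $\vz_Q$ automatically satisfy all moment, size, and smoothness conditions of a smooth analysis molecule for $\dah$, uniformly in $Q\in\cD(\rn)$ up to an absolute multiplicative constant. A computation parallel to the one in Lemma \ref{l2.1} (using the Schwartz decay of each $m_Q$ guaranteed by Definition \ref{d3.2}(i) and the size estimate \eqref{2.4} for $t_Q$) shows that $f\equiv\sum_Q t_Q m_Q$ converges in $\cS'_\infty(\rn)$ for every $t\in\dsah$. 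Lemma \ref{l3.3} then yields
\[
|(S_\varphi f)_Q|=|\langle f,\varphi_Q\rangle|
=\l|\sum_{P\in\cD(\rn)} t_P\,\langle m_P,\varphi_Q\rangle\r|
\le C\sum_{P\in\cD(\rn)}\omega_{QP}(\varepsilon_1)\,|t_P|.
\]
Thus $S_\varphi f=A\,t$ for an $\varepsilon_1$-almost diagonal operator $A$, and Theorem \ref{t3.1} combined with Theorem \ref{t2.1} gives
\[
\|f\|_{\dah}\ls\|S_\varphi f\|_{\dsah}=\|A\,t\|_{\dsah}\ls\|t\|_{\dsah}.
\]

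For part (ii), the same observation tells us that $\psi_P$ is (a fixed multiple of) a smooth synthesis molecule for $\dah$. Lemma \ref{l3.4} then gives the identity
\[
\langle f,b_Q\rangle
=\sum_{P\in\cD(\rn)}\langle f,\vz_P\rangle\,\langle\psi_P,b_Q\rangle
\]
with absolutely convergent series, and Lemma \ref{l3.3} provides the bound $|\langle\psi_P,b_Q\rangle|\le C\,\omega_{QP}(\varepsilon_1)$. Hence the map $S_\varphi f\mapsto\{\langle f,b_Q\rangle\}_{Q\in\cD(\rn)}$ is again the application of an $\varepsilon_1$-almost diagonal operator, so by Theorem \ref{t3.1} and Theorem \ref{t2.1},
\[
\|\{\langle f,b_Q\rangle\}_Q\|_{\dsah}
\ls\|S_\varphi f\|_{\dsah}\ls\|f\|_{\dah}.
\]

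The main obstacle in the plan is the convergence issue for part (i): one must justify that $\sum_Q t_Q m_Q$ defines an element of $\cS'_\infty(\rn)$ (not merely a formal sum) for every $t\in\dsah$, using only the pointwise decay \eqref{3.1} of the molecules and the a priori coefficient estimate \eqref{2.4} that comes from the Hausdorff-capacity restriction \eqref{1.1} via $\omega(\cdot,s)\ls s^{-n\tau}$. Once this is in hand, and once one checks that $\vz_Q$ and $\psi_P$ genuinely satisfy the molecule conditions of Definition \ref{d3.2} (which is straightforward from the Schwartz character and the vanishing of $\cF\vz$, $\cF\psi$ near the origin), the remainder of the argument is the standard Frazier--Jawerth reduction to almost diagonal boundedness, and one can invoke \cite[Theorem 4.2]{yy2} and \cite[Theorems 3.5, 3.7]{fj} almost verbatim.
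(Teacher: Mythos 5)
Your proposal is correct and follows essentially the same route the paper takes: the paper omits the details precisely because the argument is the standard Frazier--Jawerth reduction you describe, namely showing that the matrices $\{\langle m_P,\varphi_Q\rangle\}$ and $\{\langle\psi_P,b_Q\rangle\}$ are $\varepsilon_1$-almost diagonal via Lemma \ref{l3.3}, then combining Theorem \ref{t3.1} with the $\varphi$-transform characterization of Theorem \ref{t2.1} and the pairing identity of Lemma \ref{l3.4}. Your flagging of the convergence of $\sum_Q t_Qm_Q$ in $\cS'_\infty(\rn)$ as the point needing separate justification matches the treatment in the cited sources.
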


Theorem \ref{t3.2} generalizes the well known results on
$\dot{B}^s_{p,q}(\rn)$ and $\dot{F}^s_{p,q}(\rn)$ in
\cite{fj85,fj88,fj,fjw,b05,bh} by taking $\tau=0$.

Next we establish the smooth atomic decomposition characterizations
of $\dah$.

\begin{definition}\label{d3.3}
Let $s \in \R$, $p\in(1,\infty)$, $q\in[1,\infty)$, $\tau$ and $N$
be as in Definition \ref{d3.2}. A function $a_Q$ is called \textit{a
smooth atom for $\dah$ supported near a dyadic cube $Q$}, if there
exist $\widetilde{K}$ and $\widetilde{N}$ with $\widetilde{K} \ge
\max(\lfloor s+3n\tau+1 \rfloor, 0)$ and $\widetilde{N}\ge N$ such
that $a_Q$ satisfies the following support, regularity and moment
conditions: $\supp a_Q \subset 3Q$, $\|\partial^\gamma
a_Q\|_{L^\infty(\rn)} \le |Q|^{-\frac12-\frac{|\gamma|}{n}}$ if
$|\gamma| \le \widetilde{K}$, and $\int_{\R^n}x^\gamma a_Q(x)\,dx=0$
if $|\gamma| \le \widetilde{N}.$

A set $\{a_Q\}_{Q\in\cD(\rn)}$ of functions is called a family of smooth
atoms for $\dah$, if each $a_Q$ is a smooth atom for $\dah$
supported near $Q$.
\end{definition}

\begin{remark}\label{r3.2} We point out that in Definition \ref{d3.3},
the regularity condition of smooth atoms can be strengthened into
that $\|\partial^\gamma a_Q\|_{L^\infty(\rn)} \le
|Q|^{-\frac12-\frac{|\gamma|}{n}}$ for all $|\gz|\le M$, where $M$
can be any sufficiently large constant depending on $s,\,\tau,\,p$
and $q$; see Grafakos \cite[Definition 6.6.2]{g08} for the details.
\end{remark}

It is clear that every smooth atom for $\dah$ is a constant multiple
of a smooth synthesis molecule $\dah$. Once we establish Theorem
\ref{t3.2}, an argument used in \cite[pp.\,60-61]{fj} or
\cite[pp.\,1495-1497]{bh} yields the following conclusion; we omit
the details.

\begin{theorem}\label{t3.3}
Let $s,\,p,\,q,\,\tau$ be as in Lemma \ref{l3.3}. Then for each $f
\in \dah$, there exist a family $\{a_Q\}_{Q \in \cD(\rn)}$ of smooth
atoms for $\dah$, a coefficient sequence $t\equiv\{t_Q\}_{Q \in
\cD(\rn)}\in\dsah$, and a positive constant $C$ such that $f=\sum_{Q
\in \cD(\rn)}t_Qa_Q$ in $\cS'_\infty(\rn)$ and $\| t \|_{\dsah}\le C
\| f \|_{\dah}$.

Conversely, there exists a positive constant $C$ such that for all
families $\{a_Q\}_{Q \in \cD(\rn)}$ of smooth atoms for $\dah$ and
coefficient sequences $t\equiv\{t_Q\}_{Q \in \cD(\rn)} \in \dsah$,
$\|\sum_{Q \in \cD(\rn)}t_Qa_Q \|_{\dah} \le C \|t \|_{\dsah}$.
\end{theorem}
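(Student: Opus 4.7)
For part (ii), I would argue that every smooth atom for $\dah$ is, up to a multiplicative constant depending only on $s$, $\tau$, $p$, $q$ and $n$, a smooth synthesis molecule for $\dah$ in the sense of Definition \ref{d3.2}(i). Indeed, if $a_Q$ is as in Definition \ref{d3.3}, then $\supp a_Q\subset 3Q$ gives the required pointwise decay trivially, the bound $\|\partial^\gz a_Q\|_{L^\fz(\rn)}\le|Q|^{-1/2-|\gz|/n}$ for $|\gz|\le\widetilde K$ with $\widetilde K\ge\lfloor s+3n\tau+1\rfloor$ furnishes both the derivative estimate \eqref{3.1} and, via the mean-value theorem applied to $\partial^\gz a_Q$, the H\"older estimate \eqref{3.2} for any $\delta\in(\max\{s^*,(s+n\tau)^*\},1]$, and the moment condition is supplied by $\widetilde N\ge N$. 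Hence part (ii) follows at once from Theorem \ref{t3.2}(i).

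The substantive content lies in part (i). Given $f\in\dah$, set $t_Q\equiv\langle f,\vz_Q\rangle$ for $Q\in\cD(\rn)$; by Theorem \ref{t2.1}, $t\equiv\{t_Q\}_{Q\in\cD(\rn)}\in\dsah$ with $\|t\|_{\dsah}\le C\|f\|_{\dah}$, and the Calder\'on reproducing formula \eqref{2.2} yields $f=\sum_Q t_Q\psi_Q$ in $\cS'_\fz(\rn)$. The obstacle is that $\psi_Q$ is of Schwartz class rather than compactly supported, so $\{t_Q\psi_Q\}$ is at best a molecular decomposition. To convert it to an atomic one, I would fix a single function $\Phi\in C_c^\fz(\rn)$ with $\supp\Phi\subset 3Q_{00}$, satisfying $\int_\rn x^\gz\Phi(x)\,dx=0$ for all $|\gz|\le\widetilde N$, and normalized so that each $\Phi_Q(x)\equiv|Q|^{-1/2}\Phi(2^{j_Q}(x-x_Q))$ is a smooth atom for $\dah$. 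Using a smooth partition of unity on $\cD_{j_Q}(\rn)$ together with a polynomial correction that enforces the required vanishing moments on each piece, one produces an expansion $\psi_Q=\sum_{P\in\cD_{j_Q}(\rn)}c_{QP}\Phi_P$ whose coefficients inherit rapid decay in $|x_P-x_Q|/\ell(Q)$ from the Schwartz decay of $\psi$ and the moment conditions of $\Phi$.

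Substitution then produces $f=\sum_P\widetilde t_P\Phi_P$ with $\widetilde t_P\equiv\sum_{Q\in\cD_{j_P}(\rn)}c_{QP}t_Q$. Since $c_{QP}\ne 0$ only when $\ell(Q)=\ell(P)$, the matrix $\{c_{QP}\}_{Q,P\in\cD(\rn)}$ is supported on the equal-scale block, and by choosing $\widetilde N$ and the smoothness order of $\Phi$ sufficiently large one makes it $\varepsilon$-almost diagonal on $\dsah$ in the sense of Definition \ref{d3.1} for some $\varepsilon>2n\tau$. Theorem \ref{t3.1} then delivers $\|\widetilde t\|_{\dsah}\le C\|t\|_{\dsah}\le C\|f\|_{\dah}$, while the convergence $f=\sum_P\widetilde t_P\Phi_P$ in $\cS'_\fz(\rn)$ follows from Lemma \ref{l2.1} applied to $\widetilde t$ in combination with the already-established convergence of $\sum_Q t_Q\psi_Q$.

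The main obstacle will be implementing the moment-correction step in a scale-invariant way so that the coefficients $c_{QP}$ decay fast enough to yield $\varepsilon>2n\tau$, rather than merely $\varepsilon>0$ as in the classical $\tau=0$ case of \cite{fj}. This is precisely where the thresholds $\widetilde K\ge\max(\lfloor s+3n\tau+1\rfloor,0)$ and $\widetilde N\ge N=\max(\lfloor-s+2n\tau\rfloor,-1)$ of Definition \ref{d3.3} become indispensable: they simultaneously guarantee enough regularity of the corrected $\psi_Q$ to localize it cleanly, and enough vanishing moments on the resulting atoms to feed Theorem \ref{t3.1} with the sharper almost-diagonal threshold. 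With this accounting in place, the construction parallels the atomic-decomposition arguments of \cite[pp.\,60--61]{fj} and \cite[pp.\,1495--1497]{bh}, and the conclusion of part (i) follows.
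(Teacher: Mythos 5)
Your converse direction is correct and is exactly the paper's route: a smooth atom is a constant multiple of a smooth synthesis molecule, so Theorem \ref{t3.2}(i) applies (and, since $\supp a_Q\subset 3Q$, the polynomial decay factors in Definition \ref{d3.2}(i) are trivially available). The gap is in the direct part. The expansion $\psi_Q=\sum_{P\in\cD_{j_Q}(\rn)}c_{QP}\Phi_P$, with the $\Phi_P$ being dilated translates of a \emph{single} fixed compactly supported $\Phi$ and with $P$ restricted to the scale of $Q$, does not exist. Normalizing to $j_Q=0$, such an identity reads $\psi=\sum_{k\in\zz^n}c_k\Phi(\cdot-k)$, hence $\cF\psi=m\cdot\cF\Phi$ with $m\equiv\sum_kc_ke^{-ik\cdot}$ a $2\pi\zz^n$-periodic function. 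Since $\cF\psi$ is supported in the annulus $A\equiv\{1/2\le|\xi|\le2\}$ while $\cF\Phi$ is entire (so vanishes only on a null set), $m$ must vanish a.e.\ on $A^c$; but $A\cap(A+2\pi e_1)=\emptyset$, so periodicity forces $m=0$ a.e.\ on $A$ as well, i.e.\ $\psi=0$. The partition-of-unity-plus-moment-correction procedure you invoke does not produce multiples of $\Phi_P$; it produces $(Q,P)$-dependent pieces. If you pursue that route you must instead write $f=\sum_P\bigl(\sum_Qt_Qc_{QP}a_{QP}\bigr)$ and verify that the normalized combination $\widetilde t_P^{\,-1}\sum_Qt_Qc_{QP}a_{QP}$ is again a smooth atom supported near $P$ (true for same-scale $\ell^1$-combinations, but a step you have not supplied), and the telescoping needed to restore the vanishing moments after multiplying by a cutoff is itself nontrivial. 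Your stated worry about reaching $\varepsilon>2n\tau$ is, by contrast, a non-issue: for same-scale, compactly supported pieces the coefficients inherit arbitrarily fast decay in $(1+[\ell(Q)]^{-1}|x_P-x_Q|)^{-1}$ from the Schwartz decay of $\psi$.

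The argument the paper actually appeals to (\cite[pp.\,60--61]{fj}, \cite[pp.\,1495--1497]{bh}) avoids re-expanding $\psi_Q$ altogether by changing the reproducing formula: one chooses (via \cite[Lemma 4.1]{fj85} or \cite[Lemma (6.9)]{fjw}) a pair $\theta,\vz\in\cS(\rn)$ with $\supp\theta\subset\{|x|\le1\}$, $\int_{\rn}x^\gz\theta(x)\,dx=0$ for $|\gz|\le\widetilde N$, $\vz$ as in Definition \ref{d1.1}, and $\sum_{j\in\zz}\cF\theta(2^{-j}\xi)\cF\wz\vz(2^{-j}\xi)=1$ for $\xi\ne0$. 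Then $f=\sum_{j\in\zz}\theta_j\ast\wz\vz_j\ast f$, and splitting the convolution integral over $Q\in\cD_j(\rn)$ gives $f=\sum_{Q\in\cD(\rn)}t_Qa_Q$ with $t_Q\equiv C|Q|^{1/2}\sup_{y\in Q}|\wz\vz_{j_Q}\ast f(y)|=C\sup\nolimits_Q(f)$ and $a_Q$ a genuine smooth atom supported in $3Q$ by construction. The coefficient estimate $\|t\|_{\dsah}\le C\|f\|_{\dah}$ is then precisely the bound $\|\sup(f)\|_{\dsah}\ls\|f\|_{\dah}$ of Lemma \ref{l2.3} --- the ingredient your proposal never uses and which replaces your almost-diagonality step.
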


Theorem \ref{t3.3} again generalizes the well known results on
$\dot{B}^s_{p,q}(\rn)$ and $\dot{F}^s_{p,q}(\rn)$ in \cite{fj85,fj88,fj,fjw}
(see also \cite{b05,bh,g08}) by taking $\tau=0$.

\section{Pseudo-differential operators and trace theorems}

\hskip\parindent In this section, we give some applications of the
smooth atomic and molecular decomposition characterizations of
$\dah$, including the boundedness of pseudo-differential
operators with homogeneous symbols in these spaces and their trace
properties. We first recall the notion of homogeneous symbols; see,
for example, \cite{gt}.

\begin{definition}\label{d4.1}
Let $m \in \Z$. A smooth function $a$ defined on $\R^n_x \times
(\R^n_\xi \setminus \{0\})$ belongs to the class
$\dot{S}_{1,1}^m(\rn)$, if $a$ satisfies the following differential
inequalities that for all $\alpha,\beta \in \Z^n_+$,
\[
\sup_{x \in \R^n, \, \xi \in (\R^n \setminus \{0\})}
|\xi|^{-m-|\alpha|+|\beta|}|\partial_x^\alpha
\partial_\xi^\beta a(x,\xi)|<\fz.
\]
\end{definition}

As an application of the smooth molecular decomposition of $\dah$
(Theorem \ref{t3.2}) and the Calder\'on reproducing formula
\eqref{2.2}, we have the following conclusion.

\begin{theorem}\label{t4.1}
Let $m \in \Z$, $s\in\R$, $p\in(1,\infty)$, $q\in[1,\infty)$ and
$\tau\in[0, \frac{1}{(p \vee q)'}]$. Let $a$ be a symbol in
$\dot{S}_{1,1}^m(\rn)$ and $a(x,D)$ be the pseudo-differential
operator such that
\[
a(x,D)f(x) \equiv \int_{\R^n}a(x,\xi)(\cF f)(\xi)e^{ix\xi}\,d\xi
\]
for all smooth synthesis molecules for
$A\dot{H}_{p,q}^{s+m,\tau}(\rn)$ and $x \in \R^n$. Assume that its
formal adjoint $a(x,D)^*$ satisfies $ a(x,D)^*(x^\beta)=0$ in
$\cS'_\infty(\rn)$ for all $\beta \in \Z^n_+$ with $|\beta| \le
\max\{-s+2n\tau,-1\}$. Then $a(x,D)$ is a bounded linear operator
from $A\dot{H}_{p,q}^{s+m,\tau}(\rn)$ to $\dah$.
\end{theorem}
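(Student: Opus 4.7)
The plan is to combine the smooth molecular decomposition characterization of $\dah$ (Theorem \ref{t3.2}) with the Calder\'on reproducing formula \eqref{2.2}, so that $a(x,D)$ becomes a map on the sequence space level. Given $f \in A\dot{H}_{p,q}^{s+m,\tau}(\rn)$, I would first write, using \eqref{2.2} and Theorem \ref{t2.1},
\[
f = \sum_{Q \in \cD(\rn)} \langle f,\varphi_Q\rangle \psi_Q
\qquad \text{in } \cS'_\infty(\rn),
\]
and then, at least formally, $a(x,D) f = \sum_{Q \in \cD(\rn)} \langle f,\varphi_Q\rangle \, a(x,D)\psi_Q$. The strategy is to show that each $\psi_Q$ is (a constant multiple of) a smooth analysis molecule for $A\dot H_{p,q}^{s+m,\tau}(\rn)$, so that by Theorem \ref{t3.2}(ii) the coefficient sequence $\{\langle f,\varphi_Q\rangle\}_{Q}$ lies in $\dot a_{p,q}^{s+m,\tau}(\rn)$ with norm controlled by $\|f\|_{A\dot H^{s+m,\tau}_{p,q}(\rn)}$, and to show that $|Q|^{m/n}a(x,D)\psi_Q$ is, uniformly in $Q$, a constant multiple of a smooth synthesis molecule for $\dah$. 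Then Theorem \ref{t3.2}(i) and the elementary identity
\[
\|\{|Q|^{-m/n}t_Q\}_Q\|_{\dot a^{s,\tau}_{p,q}(\rn)}
\sim \|\{t_Q\}_Q\|_{\dot a^{s+m,\tau}_{p,q}(\rn)}
\]
give $\|a(x,D)f\|_{\dah}\lesssim \|f\|_{A\dot H^{s+m,\tau}_{p,q}(\rn)}$, which is the desired boundedness.

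The verification that $\psi_Q$ is an analysis molecule for $A\dot H^{s+m,\tau}_{p,q}(\rn)$ is routine: $\psi_Q$ has Schwartz decay and, by $\supp \cF\psi\subset\{1/2\le|\xi|\le 2\}$, infinitely many vanishing moments, and derivative bounds of the right shape follow by the scaling $\psi_Q(x)=|Q|^{-1/2}\psi(2^{j_Q}(x-x_Q))$. Convergence of the resulting series in $\cS'_\infty(\rn)$ and permission to push $a(x,D)$ through the sum will follow from the same absolute-convergence bookkeeping as in Lemma \ref{l3.4}, using the pointwise control $\omega(x,t) \lesssim t^{-n\tau}$ and the hypothesis $a(x,D)^*(x^\beta)=0$ which ensures that the adjoint action is unambiguous modulo polynomials.

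The main obstacle, and the core of the proof, is the molecule check for $m_Q \equiv |Q|^{m/n} a(x,D)\psi_Q$. The derivative inequalities of Definition \ref{d4.1} together with the normalization of $\psi_Q$ yield, via a standard integration-by-parts / nonstationary-phase argument applied to the oscillatory integral representation
\[
a(x,D)\psi_Q(x) = \int_{\R^n} a(x,\xi)\,\cF\psi_Q(\xi)\,e^{ix\cdot\xi}\,d\xi,
\]
the size estimate $|\partial^\gamma m_Q(x)|\lesssim |Q|^{-1/2-|\gamma|/n}(1+[\ell(Q)]^{-1}|x-x_Q|)^{-M}$ for arbitrarily large $M$ and all $|\gamma|\le \lfloor s+3n\tau\rfloor+1$, from which the H\"older estimate \eqref{3.2} for $|\gamma|=\lfloor s+3n\tau\rfloor$ is obtained by mean value inequality on the top-order derivative. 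Here one uses $\supp\cF\psi\subset\{1/2\le|\xi|\le 2\}$ so that, after rescaling $\xi\mapsto 2^{j_Q}\xi$, the relevant symbol $a(x,2^{j_Q}\xi)$ sits on $\{1/2\le|\xi|\le 2\}$ where the homogeneous class $\dot S^m_{1,1}(\rn)$ is indistinguishable from $S^m_{1,1}(\rn)$ and all $(x,\xi)$-derivatives up to any finite order are controlled by $2^{j_Q m}$; the prefactor $|Q|^{m/n}=2^{-j_Q m}$ cancels this, yielding uniform-in-$Q$ molecule bounds.

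Finally, the moment conditions are supplied precisely by the adjoint hypothesis: for any $\beta\in\Z^n_+$ with $|\beta|\le N=\max(\lfloor-s+2n\tau\rfloor,-1)$,
\[
\int_{\R^n} x^\beta m_Q(x)\,dx
= |Q|^{m/n}\bigl\langle a(x,D)\psi_Q,\, x^\beta\bigr\rangle
= |Q|^{m/n}\bigl\langle \psi_Q,\, a(x,D)^*(x^\beta)\bigr\rangle = 0,
\]
interpreted in the duality between $\cS_\infty(\rn)$ and $\cS'_\infty(\rn)$ as in Lemma \ref{l3.4}, so the moment requirement of Definition \ref{d3.2}(i) is met. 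The hardest technical step is the uniform derivative/H\"older control on $m_Q$ up to order $\lfloor s+3n\tau\rfloor$, since the index $2n\tau$ is larger than what appears in \cite{gt,fj}; but Remark \ref{r3.2} shows that the required regularity for the target molecules is finite, and the symbol estimates of Definition \ref{d4.1} provide enough derivatives. Combining all of this with Theorem \ref{t3.2} completes the proof.
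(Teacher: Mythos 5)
Your proposal is correct and follows essentially the same route as the paper: Calder\'on reproducing formula plus the $\varphi$-transform bound on the coefficients, reduction to showing that $|Q|^{m/n}a(x,D)\psi_Q$ is a uniform constant multiple of a smooth synthesis molecule for $\dah$ (moments coming from the adjoint hypothesis), and then Theorem \ref{t3.2}(i). The only difference is that the paper simply cites Grafakos--Torres for the molecule verification, whereas you sketch the nonstationary-phase argument yourself.
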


\begin{proof}
The proof is similar to that in \cite{ftw,fhjw,t90,t91,gt}; see also \cite{syy1}. We
abbreviate $T \equiv a(x,D)$ for simplicity. Let $f \in
A\dot{H}_{p,q}^{s+m,\tau}(\rn)$ and $\varphi$ be as in Definition
\ref{d1.1} such that for all $\xi \in \R^n$, $\sum_{j \in
\Z}|\cF\varphi(2^{-j}\xi)|^2 = \chi_{\R^n \setminus \{0\}}(\xi)$.
Then by the Calder\'on reproducing formula \eqref{2.2}, we have $f
\equiv \sum_{Q \in \cD(\rn)} \langle f,\varphi_Q \rangle \varphi_Q $
in $\cS'_\infty(\rn)$; moreover, by the $\varphi$-transform
characterization of $A\dot{H}_{p,q}^{s+m,\tau}(\rn)$ (see Theorem
\ref{t2.1}), we see that $\| \{\langle f,\varphi_Q \rangle\}_{Q \in
\cD(\rn)} \|_{a\dot{H}_{p,q}^{s+m,\tau}(\rn)} \lesssim \| f
\|_{A\dot{H}_{p,q}^{s+m,\tau}(\rn)}$, or equivalently,
$\|\{|Q|^{-\frac{m}{n}}\langle f,\varphi_Q \rangle\}_{Q \in
\cD(\rn)}\|_{\dsah} \lesssim \| f
\|_{A\dot{H}_{p,q}^{s+m,\tau}(\rn)}$.

We claim that $T(f)\equiv\sum_{Q \in \cD(\rn)} \langle f,\varphi_Q
\rangle T(\varphi_Q) $ in $\cS'_\infty(\rn)$ with
$\|T(f)\|_{\dah}\ls\|f\|_{A\dot{H}_{p,q}^{s+m,\tau}(\rn)}$. To this
end, by Theorem \ref{t3.2} (i), it suffices to show that every
$|Q|^{\frac{m}{n}}T(\varphi_Q)$ is a constant multiple of a
synthesis molecule for $\dah$ supported near $Q$. This fact was
established by Grafakos and Torres \cite{gt}; see also \cite{syy1}.
We then conclude that $T$ is bounded from
$A\dot{H}_{p,q}^{s+m,\tau}(\rn)$ to $\dah$, which completes the proof of
Theorem \ref{t4.1}.
\end{proof}

We remark that Theorem \ref{t4.1} generalizes the corresponding
classical results in Besov spaces and Triebel-Lizorkin spaces
obtained by Grafakos and Torres \cite[Theorems 1.1 and 1.2]{gt} when
$p\in(1,\fz)$ and $q\in[1,\fz)$ by taking $\tau=0$.

As an application of smooth atomic decomposition of $\dah$, we are
now going to show the trace theorem. For $x=(x_1,\cdots,x_n)\in\rn$,
we set $x'=(x_1,\cdots,x_{n-1})\in\R^{n-1}$.

\begin{theorem}\label{t4.2}
Let $n\ge2$, $p\in(1,\infty)$, $q\in[1,\infty)$, $\tau\in[0,
\frac{n-1}{n(p\vee q)'}]$ and $s\in(\frac1p+2n\tau,\fz)$. Then there
exists a surjective and continuous operator
$${\rm Tr}: f\in\dah \mapsto {\rm Tr}(f)\in
A\dot{H}_{p,q}^{s-\frac1p,\frac{n}{n-1}\tau}(\R^{n-1})$$ such that
${\rm Tr}(f)(x')=f(x',0)$ for all $x'\in\R^{n-1}$ and smooth atoms
$f$ for $\dah$.
\end{theorem}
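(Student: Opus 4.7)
The plan is to prove Theorem \ref{t4.2} by combining the smooth atomic decomposition of $\dah$ (Theorem \ref{t3.3}) with a dimension-reduction argument for admissible Hausdorff weights. For the boundedness of $\mathrm{Tr}$, I would first take $f \in \dah$ and decompose $f = \sum_{Q \in \cD(\rn)} t_Q a_Q$ in $\cS'_\infty(\rn)$ with $\|t\|_{\dsah} \ls \|f\|_{\dah}$. Writing $Q = Q_{j,(k',k_n)}$ with $k' \in \zz^{n-1}$, only cubes with $k_n \in \{-1,0,1\}$ produce atoms whose support $3Q$ meets $\{x_n=0\}$, so the trace reduces to three sums of the form $\sum_{\tilde Q \in \cD(\R^{n-1})} \tilde t^{(k_n)}_{\tilde Q} \tilde a^{(k_n)}_{\tilde Q}$, where $\tilde Q \equiv Q_{j,k'}$, $\tilde a^{(k_n)}_{\tilde Q}(x') \equiv 2^{-j/2} a_{Q_{j,(k',k_n)}}(x',0)$, and $\tilde t^{(k_n)}_{\tilde Q} \equiv 2^{j/2} t_{Q_{j,(k',k_n)}}$. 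The identity $|\tilde Q|^{-1/2} = 2^{j/2}|Q|^{-1/2}$ together with the bounds in Definition \ref{d3.3} make each $\tilde a^{(k_n)}_{\tilde Q}$ into a smooth atom for $A\dot H^{s-1/p,n\tau/(n-1)}_{p,q}(\R^{n-1})$; the moment condition is void because $s > 1/p + 2n\tau$ forces $\max(\lfloor -(s-1/p)+2n\tau\rfloor,-1)=-1$, and the regularity required on $\R^{n-1}$ is dominated by that available on $\rn$.

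By Theorem \ref{t3.3} applied in dimension $n-1$, it remains to bound each $\|\tilde t^{(k_n)}\|_{a\dot H^{s-1/p,n\tau/(n-1)}_{p,q}(\R^{n-1})}$ by $\|t\|_{\dsah}$, and this is achieved through a \emph{weight transfer}. Given an admissible $\omega$ on $\rr^{n+1}_+$ almost attaining the infimum for $\|t\|_{\dsah}$, define
\begin{equation*}
\tilde\omega(x',s) \equiv \sup\{\omega((y',y_n),s) : |y'-x'|<s,\ |y_n|<3s\},
\end{equation*}
so that $[\tilde\omega(x',2^{-j})]^{-1} \le [\omega((x',x_n),2^{-j})]^{-1}$ pointwise on the support of the relevant atoms. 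A Fubini identity using $\|\chi_Q\|_{L^p(\rn)}^p = 2^{-j}\|\chi_{\tilde Q}\|_{L^p(\R^{n-1})}^p$ then converts the scaling factor $2^{jsq}$ on $\rn$ into $2^{j(s-1/p)q}$ on $\R^{n-1}$, matching the trace-space norm. The admissibility of $\tilde\omega$ in dimension $n-1$ demands $\int_{\R^{n-1}}[N\tilde\omega]^{(p\vee q)'}\,dH^{(n-1)\tilde\tau(p\vee q)'} \ls 1$ with $(n-1)\tilde\tau = n\tau$, and this follows from the pointwise bound $N\tilde\omega(x') \ls N_4\omega(x',0)$, Corollary \ref{c3.1}, and the elementary comparison $H^d_{n-1}(E) \sim H^d_n(E)$ for $E \subset \R^{n-1}\times\{0\}$ via a direct covering argument (the anticipated Lemma 4.1 alluded to in the introduction).

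For well-definedness of $\mathrm{Tr}(f)$ independent of the decomposition, the identification $\mathrm{Tr}(f)(x') = f(x',0)$ is immediate on finite atomic sums, and continuity extends the definition to all of $\dah$. For surjectivity, given $g \in A\dot H^{s-1/p,n\tau/(n-1)}_{p,q}(\R^{n-1})$, decompose $g = \sum_{\tilde Q}\tilde t_{\tilde Q}\tilde a_{\tilde Q}$, fix $\eta \in C^\infty_c(\rr)$ with $\eta(0)=1$ and $\supp\eta \subset (-1,1)$, and define $a_Q(x) \equiv C\cdot 2^{j/2}\tilde a_{\tilde Q}(x')\eta(2^j x_n)$ for $Q \equiv Q_{j,(k',0)}$, with $C$ a normalization constant ensuring that $a_Q$ is a smooth atom for $\dah$. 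Setting $t_Q \equiv C^{-1} 2^{-j/2}\tilde t_{\tilde Q}$ (and zero otherwise), and taking $\omega((x',x_n),s) \equiv \tilde\omega(x',s)\chi_{|x_n|<s}$ (which vanishes only outside $\supp\sum_Q|t_Q|\widetilde\chi_Q$, as permitted by Definition \ref{d2.1}), an analogous Fubini computation plus the reverse capacity estimate yields $\|t\|_{\dsah} \ls \|\tilde t\|_{a\dot H^{s-1/p,n\tau/(n-1)}_{p,q}(\R^{n-1})}$, so $f \equiv \sum_Q t_Q a_Q \in \dah$ with $\mathrm{Tr}(f)=g$ by construction. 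The principal obstacle is the reverse capacity estimate: while $H^d_{n-1}(E) \le H^d_n(E)$ for $E \subset \R^{n-1}\times\{0\}$ is immediate, controlling the $H^d_n$-capacity of the thickened set $\{N\omega>\lambda\} \subset A_\lambda \times [-T_\lambda,T_\lambda]$ (with $T_\lambda \sim \lambda^{-1/(n\tau)}$ coming from the decay $\tilde\omega \ls s^{-n\tau}$) in terms of $H^d_{n-1}(A_\lambda)$ uniformly in $\lambda$ is the delicate geometric fact encoded in the anticipated Lemma 4.1, and it is precisely this constraint that singles out the exponent $n\tau/(n-1)$ in the trace space.
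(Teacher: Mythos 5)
Your overall architecture coincides with the paper's: atomic decomposition in both directions, restriction of atoms to the hyperplane with the normalization $[\ell(Q')]^{1/2}$, tensor extension with a bump for surjectivity, and a ``weight transfer'' between admissible $\omega$ on $\rr^{n+1}_+$ and $\wz\omega$ on $\R^{n-1}\times(0,\fz)$, with the exponent $\frac{n}{n-1}\tau$ chosen so that the Hausdorff dimension $d=n\tau(p\vee q)'$ is the same in both ambient dimensions. The boundedness half is essentially complete: the comparison you need there is only the easy inequality $\wz H^{d}(\wz E_\lambda)\ls H^{d}(E_\lambda)$, obtained by projecting a near-optimal ball covering of $E_\lambda\subset\rn$ onto $\R^{n-1}$, and this is not where Lemma \ref{l4.1} enters.

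The genuine gap is in the surjectivity half, and you have located it but not closed it, and the hint you give points the wrong way. Controlling $H^{d}(E_\lambda)$ for the thickened set by $\wz H^{d}(\wz E_\lambda)$ cannot be done through the global thickness bound $T_\lambda\sim\lambda^{-1/(n\tau)}$: a near-optimal covering of $\wz E_\lambda$ may consist of balls far smaller than $T_\lambda$, and the capacity of $A_\lambda\times[-T_\lambda,T_\lambda]$ is in general not comparable to that of $A_\lambda$. What makes the argument work is a \emph{local} statement: the thickness of $E_\lambda$ above a point $x'$ is at most (a multiple of) the radius $t$ of a ball $B(y',t)\subset\wz E_\lambda$ containing $x'$ nearby, hence at most the distance from $y'$ to $\partial\wz E_\lambda$. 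One therefore needs a covering of $\wz E_\lambda$ that is simultaneously near-optimal for $\wz H^{d}$ \emph{and} Whitney-adapted, i.e.\ each covering ball has radius $\gs$ its center's distance to $\partial\wz E_\lambda$; then each dilated ball $cB^*_m\times[0,cr_{B^*_m})$ absorbs the local thickening and $\sum_m r_{B^*_m}^{d}\sim\wz H^{d}(\wz E_\lambda)$. The existence of such coverings is exactly Lemma \ref{l4.1} ($H^d_*(\Omega)\sim H^d(\Omega)$), whose proof is a nontrivial combinatorial argument merging an arbitrary efficient covering with a Whitney decomposition of $\Omega$. Without supplying this lemma (or an equivalent), the step $\|t\|_{\dsbh}\ls\|\wz t\|_{b\dot H^{s-1/p,\frac{n}{n-1}\tau}_{p,q}(\R^{n-1})}$ does not follow, and surjectivity remains unproved.
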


To prove this theorem, we need the following technical lemma.

\begin{lemma}\label{l4.1}
Let $d\in (0, n]$ and $\Omega$ be an open set in $\rn$. Define
\[
H^d_*(\Omega) \equiv \inf \left\{ \sum_{j=1}^\infty r_j^d \, : \,
\Omega \subset \bigcup_{j=1}^\infty B(x_r,r_j), \, r_j>\frac{{\rm
dist}(x_j,\partial \Omega)}{10000} \right\}.
\]
Then $H^d(\Omega)$ and $H^d_*(\Omega)$ are equivalent for all
$\Omega$.
\end{lemma}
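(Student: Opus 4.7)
The easy half $H^d(\Omega)\le H^d_*(\Omega)$ is immediate because every collection of balls admissible in the infimum defining $H^d_*(\Omega)$ is, a fortiori, a ball covering of $\Omega$ appearing in the infimum defining $H^d(\Omega)$; restricting the infimum to the smaller family only enlarges it. The whole substance of the lemma lies in the reverse inequality $H^d_*(\Omega)\ls H^d(\Omega)$, and the plan is to turn any near-optimal ball covering of $\Omega$ into an admissible one with comparable $d$-sum.

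Concretely, start from a ball covering $\{B(x_j,r_j)\}_{j=1}^\infty$ of $\Omega$ with $\sum_j r_j^d\le 2H^d(\Omega)$, set $D_j\equiv{\rm dist}(x_j,\partial\Omega)$, and split the indices into $J_1\equiv\{j:\,r_j>D_j/10000\}$ (already admissible, to be retained unchanged) and its complement $J_2$. After discarding balls disjoint from $\Omega$, observe that any ball with $x_j\notin\overline{\Omega}$ still meeting $\Omega$ satisfies $r_j>D_j$ and hence lies in $J_1$; so for every $j\in J_2$ the center $x_j$ lies in $\Omega$ and $B(x_j,r_j)\subset\Omega$ at depth at least $10000\,r_j$ from $\partial\Omega$.

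For each $j\in J_2$ the enlargement $B(x_j,D_j/5000)$ contains $B(x_j,r_j)$ (since $D_j/5000\ge r_j$) and is admissible (since $D_j/5000>D_j/10000$). The next step is to apply the $(5r)$-covering lemma (as in the proof of Lemma \ref{l3.1}) to the family $\{B(x_j,D_j/5000)\}_{j\in J_2}$ and to extract a pairwise disjoint subfamily $\{B(x_{j_k},D_{j_k}/5000)\}_k$ whose fivefold dilation covers the original union and hence covers $\bigcup_{j\in J_2}B(x_j,r_j)$; each dilated ball $B(x_{j_k},D_{j_k}/1000)$ remains admissible because $D_{j_k}/1000>D_{j_k}/10000$. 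Combined with $\{B(x_j,r_j)\}_{j\in J_1}$ this yields an admissible covering of $\Omega$ whose total $d$-content is at most $\sum_{j\in J_1}r_j^d+5^d\sum_k(D_{j_k}/5000)^d$, the first part being bounded by $2H^d(\Omega)$.

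The main obstacle, and the crux of the proof, is the estimate $\sum_k(D_{j_k}/5000)^d\ls H^d(\Omega)$. Since each disjoint proxy $B(x_{j_k},D_{j_k}/5000)\subset\Omega$ is itself a ball of Hausdorff $d$-content $(D_{j_k}/5000)^d$, and is covered by the subfamily of $\{B(x_j,r_j)\}_j$ intersecting it, one has
\begin{equation*}
(D_{j_k}/5000)^d\le \sum_{j\,:\,B(x_j,r_j)\cap B(x_{j_k},D_{j_k}/5000)\ne\emptyset}r_j^d.
\end{equation*}
Summing over $k$ and interchanging the order of summation reduces the task to a bounded-overlap estimate, namely that each $B(x_j,r_j)$ in the original covering meets only a controlled number of the pairwise disjoint proxies. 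The factor $10000$ in the admissibility threshold is chosen precisely so that the triangle inequality forces any proxies meeting a common $B(x_j,r_j)$ to have depths $D_{j_k}$, and hence radii $D_{j_k}/5000$, confined to a narrow range comparable to $r_j$; a dyadic grouping of the proxies by this depth-scale, combined with their pairwise disjointness in $\rn$, then yields the finite overlap count and, after reassembly, the desired bound $H^d_*(\Omega)\ls H^d(\Omega)$.
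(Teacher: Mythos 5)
Everything up to and including the display $(D_{j_k}/5000)^d\le\sum_{j}r_j^d$ is sound, but the final step --- the claimed bounded overlap between the original covering balls and the disjoint proxies --- is false, and in fact the quantity $\sum_k(D_{j_k}/5000)^d$ that you must dominate by $H^d(\Omega)$ can be infinite, so no repair of the overlap count can rescue this construction. For the overlap claim: if $B(x_j,r_j)$ meets $B(x_{j_k},D_{j_k}/5000)$, the triangle inequality only places $D_{j_k}$ in an interval of the form $(c(D_j-r_j),\,C(D_j+r_j))$; when $D_j\ls r_j$ (a large covering ball whose center lies near $\partial\Omega$) this interval contains every small scale, so that single ball can meet disjoint proxies on infinitely many dyadic depth-scales (in $\Omega=(0,2)\subset\R$, the disjoint proxies $B(2^{-k},2^{-k}/5000)$ all meet $B(0,1)$). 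For the stronger failure, take $\Omega=(0,1)^n$ with $n\ge2$ and $d<n-1$, and as a near-optimal covering take the single ball $B(c,2\sqrt{n})$ together with, for each large $m$, roughly $2^{m(n-1)}$ redundant balls of radius $2^{-m}2^{-m^2}$ centered at $2^{-m}$-separated points at distance exactly $2^{-m}$ from $\partial\Omega$. The total $d$-content of the redundant balls is as small as one likes, so this is an admissible starting covering, and every redundant ball lies in your $J_2$. Their proxies $B(y,2^{-m}/5000)$ are already pairwise disjoint and too far apart (relative to their radii) for any fivefold dilate to absorb a neighbour, so the Vitali step must retain essentially all of them, and
\begin{equation*}
\sum_k\left(\frac{D_{j_k}}{5000}\right)^d\sim\sum_m 2^{m(n-1)}\,2^{-md}=\infty,
\qquad\text{while}\qquad H^d\bigl((0,1)^n\bigr)\sim1 .
\end{equation*}

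The defect is structural rather than technical: you inflate \emph{every} non-admissible ball to a ball at the Whitney scale of its center, irrespective of whether the region it covers is already handled by large admissible balls, and a disjoint family of Whitney-scale balls in $\Omega$ has infinite total $d$-content whenever $d\le n-1$. The paper's proof avoids exactly this by running the dichotomy on a Whitney decomposition of $\Omega$ instead of on the covering balls: a Whitney ball $B(X_j,R_j)$ is promoted to the admissible covering only when \emph{every} covering ball meeting it has radius $<R_j/4$, in which case its content is charged to those small covering balls via $R_j^d\le(\sum_k|B(x_k,r_k)|)^{d/n}\le\sum_k r_k^d$ together with the bounded overlap of the doubled Whitney balls; Whitney balls touched by a large covering ball are never replaced, so redundant deep tiny balls sitting under a big ball never generate depth-scale proxies. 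To complete your argument you would need to import some version of this selection mechanism (for instance, only inflating a non-admissible ball when it is not already contained in a bounded dilate of an admissible ball of the covering), and then redo the charging estimate accordingly.
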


\begin{proof}
The inequality $H^d(\Omega)\le H^d_*(\Omega)$ is trivial from the
definitions. To prove the converse, we choose a ball covering
$\{B(x_j,r_j)\}_{j=1}^\infty$ of $\Omega$ such that
$\sum_{j=1}^\infty r_j^d \le 2H^d(\Omega)$. Let
$\{B(X_j,R_j)\}_{j=1}^\infty$ be a Whitney covering of $\Omega$
satisfying $\Omega=\cup_{j=1}^\infty B(X_j,R_j)$, $R_j/1000\le {\rm
dist}(X_j,\partial \Omega) \le R_j/100$ and $\sum_{j \in
\N}\chi_{R_j} \le C_n$; see, for example, \cite[Proposition
7.3.4]{g08}. Set
\[
J_1\equiv\left\{ j \in \N \, : \, (B(X_j,R_j) \cap
B(x_k,r_k))\neq\emptyset\ \mbox{and}\ R_j\le 4r_k\ \mbox{for some $k
\in \N$ }\right\}
\]
and $J_2\equiv (\N \setminus J_1)$. Notice that if $k \in \N$
satisfies $(B(X_j,R_j) \cap B(x_k,r_k)) \ne \emptyset$ for some $j
\in J_2$, then $B(x_k,r_k) \subset B(X_j,2R_j)$, since $r_k <R_j/4$.
With this in mind, we define
\[
K_2\equiv\{ k \in \N \, : \, (B(x_k,r_k)\cap B(X_j,R_j))\ne
\emptyset\
 \mbox{ for some } j \in J_2 \},
\]
and $K_1\equiv(\N \setminus K_2)$. It is easy to see that
\begin{equation}\label{4.1}
\bigcup_{k=1}^\infty B(x_k,r_k) \subset \l(\bigcup_{k \in
K_1}B(x_k,r_k) \bigcup \bigcup_{j \in J_2}B(X_j,2R_j)\r).
\end{equation}
Furthermore, for each $k \in \N$, the cardinality of the set $
\{j\in J_2:\, (B(x_k,r_k) \cap B(X_j,R_j)) \ne \emptyset\}$ is
bounded by a constant depending only on the dimension. Hence, we
have
\begin{align*}
\sum_{k=1}^\infty r_k^d &= \sum_{k \in K_1}r_k^d + \sum_{k \in
K_2}r_k^d\sim \sum_{k \in K_1}r_k^d + \sum_{j \in J_2} \left(
\sum_{k \in K_2, \, (B(x_k,r_k) \cap B(X_j,R_j)) \ne \emptyset}
r_k^d
\right)\\
&\sim \sum_{k \in K_1}r_k^d + \sum_{j \in J_2} \left( \sum_{k \in
K_2, \, (B(x_k,r_k) \cap B(X_j,R_j)) \ne \emptyset}
|B(x_k,r_k)|^{\frac{d}{n}} \right).
\end{align*}
Notice that $B(X_j,R_j) \subset \Omega \subset (\cup_{k=1}^\infty
B(x_k,r_k))$. Then for each $j \in J_2$, we have
\[
B(X_j,R_j) \subset \l\{\bigcup_{k \in K_2, \, (B(x_k,r_k) \cap
B(X_j,R_j)) \ne \emptyset}B(x_k,r_k)\r\}
\]
Since $d \in(0, n]$, by the monotonicity of $l^{\frac dn}$, we see
that
\begin{eqnarray*}
&&\left( \sum_{k \in K_2, \, (B(x_k,r_k) \cap B(X_j,R_j)) \ne
\emptyset}
|B(x_k,r_k)|^{\frac{d}{n}} \right)\\
&&\hs\ge \left( \sum_{k \in K_2, \, (B(x_k,r_k) \cap B(X_j,R_j)) \ne
\emptyset} |B(x_k,r_k)|\right)^{\frac{d}{n}}\ge
|B(X_j,R_j)|^{\frac{d}{n}}.
\end{eqnarray*}
As a consequence, $\sum_{k=0}^\fz r_k^d \gtrsim \sum_{k \in
K_1}r_k^d + \sum_{j \in J_2}R_j^d$, which combined with \eqref{4.1}
yields that $H^d_*(\Omega) \le \sum_{k \in K_1}r_k^d + \sum_{j \in
J_2}(2R_j)^d\ls \sum_{k \in K_1}r_k^d + \sum_{j \in J_2}R_j^d
\lesssim \sum_{k=0}^\fz r_k^d \lesssim H^d(\Omega)$. This finishes
the proof of Lemma \ref{l4.1}.
\end{proof}

\begin{proof}[Proof of Theorem \ref{t4.2}]
For similarity, we concentrate on the space $\dbh$. By Theorem
\ref{t3.3}, any $f \in \dbh$ admits a smooth atomic decomposition
$f=\sum_{Q \in \cD(\rn)}t_Q a_Q$ in $\cS'_\fz(\rn)$, where each
$a_Q$ is a smooth atom for $\dbh$ and
$t\equiv\{t_Q\}_{Q\in\cD(\rn)}\subset\C$ satisfies
$\|t\|_{\dsbh}\lesssim\|f \|_{\dbh}. $ Since $s>1/p+2n\tau$, there
is no need to postulate any moment condition on $a_Q$. Define
\[
{\rm Tr}(f)(*')\equiv\sum_{Q \in \cD(\rn)}t_Q a_Q(*',0) =\sum_{Q \in
\cD(\rn)}\frac{t_Q}{[\ell(Q)]^\frac12} [\ell(Q)]^{\frac12}a_Q(*',0).
\]
By the support condition of smooth atoms, the above
summation can be re-written as
\begin{equation}\label{4.2}
{\rm Tr}(f)(*')\equiv \sum_{i=0}^2\sum_{Q' \in
\cD(\R^{n-1})}\frac{t_{Q'\times[(i-1)\ell(Q'),
i\ell(Q'))}}{[\ell(Q')]^\frac12}
[\ell(Q')]^{\frac12}a_{Q'\times[(i-1)\ell(Q'), i\ell(Q'))}(*',0).
\end{equation}

We need to show that \eqref{4.2} converges in $\cS'_\fz(\R^{n-1})$
and
$$\|{\rm
Tr}(f)\|_{B\dot{H}_{p,q}^{s-\frac1p,\frac{n}{n-1}\tau}(\R^{n-1})}\ls
\|f\|_{\dbh}.$$ To this end, by Theorem \ref{t3.3}, it suffices to
prove that each $[\ell(Q')]^\frac12a_{Q' \times
[(i-1)\ell(Q'),i\ell(Q'))}(*',0)$ is a smooth atom for
$B\dot{H}_{p,q}^{s-\frac{1}{p},\frac{n}{n-1}\tau}(\R^{n-1})$
supported near $Q'$ and for all $i\in\{0,\,1,\,2\}$,
\begin{equation}\label{4.3}
\l\|\l\{[\ell(Q')]^{-\frac12}t_{Q' \times
[(i-1)\ell(Q'),i\ell(Q'))}\r\}_{Q'\in
\cD(\R^{n-1})}\r\|_{b\dot{H}_{p,q}^{s-\frac{1}{p},
\frac{n}{n-1}\tau}(\R^{n-1})}<\fz.
\end{equation} Indeed, it was
already proved in \cite{syy1} that $[\ell(Q')]^\frac12a_{Q' \times
[(i-1)\ell(Q'),i\ell(Q'))}(*',0)$ is a smooth atom for
$B\dot{H}_{p,q}^{s-\frac{1}{p},\frac{n}{n-1}\tau}(\R^{n-1})$. By
similarity, we only prove \eqref{4.3} when $i=1$. Let $\omega$ be a
nonnegative function on $\R^{n+1}_+$ satisfying \eqref{1.1} and
$$\left\{ \sum_{j \in \Z} \left[ \sum_{Q \in
\cD_j(\rn)}|Q|^{-(\frac{s}{n}+\frac12)p}|t_Q|^p\int_Q[\omega(x,2^{-j})]^{-p}\,dx
\right]^{\frac qp}\right\}^\frac1q\ls \|t\|_{\dsbh}.$$ For all
$\lambda\in(0,\fz)$, set $E_\lambda\equiv
\{x\in\rn:\,[N\omega(x)]^{(p\vee q)'}>\lambda\}$. Then there exists
a ball covering $\{B_m\}_m$ of $E_\lambda$ such that
\begin{equation}\label{4.4}
H^{n\tau (p\vee q)'}(E_\lambda) \sim \sum_m r_{B_m}^{n\tau (p\vee
q)'},
\end{equation}
where $r_{B_m}$ denotes the radius of $B_m$. Let
$\wz{H}^{n\tau(p\vee q)'}$ be the $(n-1)\frac {n\tau}{n-1}(p\vee
q)'$-Hausdorff capacity in $\R^{n-1}$ and define $\wz\omega$ on
$\R^n_+$ by setting, for all $x'\in\R^{n-1}$ and $t\in(0,\fz)$,
$\wz\omega(x',t)\equiv \wz{C}\sup_{\{x_n \in {\mathbb R} \, : \,
|x_n|<t\}}\omega((x',x_n),t)$, where $\wz{C}$ is a positive constant
chosen so that $N\wz\omega(x')\le N\omega(x',0)$ for all
$x'\in\R^{n-1}$. Therefore, if $[N\wz\omega(x')]^{(p\vee
q)'}>\lambda$, then $[N\omega(x',0)]^{(p\vee q)'}>\lambda$, and
hence $(x',0)\in B_m$ for some $m$, which further implies that
$\wz{E}_\lambda\equiv\{x'\in\R^{n-1}:\,[N\wz\omega(x')]^{(p\vee
q)'}>\lambda\}\subset (\cup_m B^*_m)$, where $B^*_m$ is the
projection of $B_m$ from $\rn$ to $\R^{n-1}$. This combined with
\eqref{4.4} further yields that $$\int_{\R^{n-1}}
[N\wz\omega(x')]^{(p \vee q)'}d\wz{H}^{n\tau(p \vee
q)'}(x')=\int_0^\fz \wz{H}^{n\tau(p \vee
q)'}(\wz{E}_\lambda)\,d\lambda\ls\int_0^\fz H^{n\tau(p \vee
q)'}(E_\lambda)\,d\lambda \ls 1.$$ Furthermore,
\begin{eqnarray*}
&&\l\|\l\{[\ell(Q')]^{-\frac12}t_{Q' \times [0,\ell(Q'))}\r\}_{Q'\in
\cD(\R^{n-1})}\r\|_{b\dot{H}_{p,q}^{s-\frac{1}{p},
\frac{n}{n-1}\tau}(\R^{n-1})}\\
&&\hs\ls \left\{ \sum_{j \in \Z} \left[ \sum_{Q' \in
\cD_j(\R^{n-1})}[\ell(Q')]^{-sp-\frac{np}{2}+1} |t_{Q' \times
[0,\ell(Q'))}|^p\int_{Q'}[\wz\omega(x',2^{-j})]^{-p}\,dx'
\right]^{\frac qp}\right\}^\frac1q\\
&&\hs\ls\left\{ \sum_{j \in \Z} \left[ \sum_{Q' \in
\cD_j(\R^{n-1})}[\ell(Q')]^{-sp-\frac{np}{2}} |t_{Q' \times
[0,\ell(Q'))}|^p\int_{Q}[\omega(x,2^{-j})]^{-p}\,dx
\right]^{\frac qp}\right\}^\frac1q\\
&&\hs\ls \|t\|_{\dsbh},
\end{eqnarray*}
which implies that ${\rm Tr}$ is well defined and bounded from
$\dbh$ to $B\dot{H}_{p,q}^{s-\frac1p,\frac{n}{n-1}\tau}(\R^{n-1})$.

Let us show that ${\rm Tr}$ is surjective. To this end, for any $f
\in B\dot{H}_{p,q}^{s-\frac1p,\frac{n}{n-1}\tau}(\R^{n-1})$, by
Theorem \ref{t3.3}, there exist smooth atoms $\{a_{Q'}\}_{Q' \in
\cD(\R^{n-1})}$ for
$B\dot{H}_{p,q}^{s-\frac1p,\frac{n}{n-1}\tau}(\R^{n-1})$ and
coefficients $t\equiv\{t_{Q'}\}_{Q' \in \cD(\R^{n-1})}$ such that
$f=\sum_{Q' \in \cD(\R^{n-1})}t_{Q'}a_{Q'}$ in $\cS_\fz'(\R^{n-1})$
and
$\|t\|_{b\dot{H}_{p,q}^{s-\frac{1}{p},\frac{n}{n-1}\tau}(\R^{n-1})}
\ls\|f\|_{B\dot{H}_{p,q}^{s-\frac1p,\frac{n}{n-1}\tau}(\R^{n-1})}.$
Let $\varphi \in C^\infty_c(\R)$ with $\supp \varphi
\subset(-\frac12,\frac12)$ and $\varphi(0)=1$. For all $Q' \in
\cD(\R^{n-1})$ and $x\in\R$, set
$\varphi_{Q'}(x)\equiv\varphi(2^{-\log_2 \ell(Q')}x)$. Under this
notation, we define $F\equiv\sum_{Q' \in \cD(\R^{n-1})} t_{Q'}a_{Q'}
\otimes \varphi_{Q'}.$ It is easy to check that for all $Q' \in
\cD(\R^{n-1})$, $[\ell(Q')]^{-\frac12}a_{Q'} \otimes \varphi_{Q'}$
is a smooth atom for $\dbh$ supported near $Q'\times [0, \ell(Q'))$.
Hence, to show $F\in\dbh$, by Theorem \ref{t3.3}, it suffices to
prove that
\begin{equation*}
\l\|\{[\ell(Q')]^\frac12t_{Q'}\}_{Q'\in{\mathcal D}
(\R^{n-1})}\r\|_{\dsbh}
\ls\|f\|_{B\dot{H}_{p,q}^{s-\frac1p,\frac{n}{n-1}\tau}(\R^{n-1})}.
\end{equation*}

Let $\wz\omega$ satisfy $\int_{\R^{n-1}}[N\wz\omega(x')]^{(p\vee
q)'} d\wz{H}^{n\tau(p\vee q)'}(x')\le1$ and
$$\left\{ \sum_{j \in \Z} \left[ \sum_{Q' \in
\cD_j(\R^{n-1})}|Q'|^{-(\frac{s-1/p}{n-1}+\frac12)p}|t_{Q'}|^p\int_{Q'}
[\wz\omega(x',2^{-j})]^{-p}\,dx' \right]^{\frac
qp}\right\}^\frac1q\ls
\|t\|_{b\dot{H}_{p,q}^{s-\frac{1}{p},\frac{n}{n-1}\tau}(\R^{n-1})}.$$
By Lemma \ref{l4.1}, for each $\lambda\in(0,\fz)$, there exists a
ball covering $\{B^*_m\}_m\equiv \{B(x_{B^*_m},r_{B^*_m})\}_m$ of
$\wz{E}_\lambda\equiv\{x'\in\R^{n-1}:\,[N\wz\omega(x')]^{(p\vee q)'}
>\lambda\}$ such that $\sum_m r_{B^*_m}^{n\tau(p\vee
q)'}\sim\wz{H}_*^{n\tau(p\vee
q)'}(\wz{E}_\lambda)\sim\wz{H}^{n\tau(p\vee q)'}(\wz{E}_\lambda)$
and that $r_{B_m^*}>{\rm
dist}(x_{B^*_m},\partial\wz{E}_\lambda)/10000$ for all $m$. For all
$x=(x',x_n)\in\rn$ and $t\in(0,\fz)$, define $\omega(x,t)\equiv
\wz\omega(x',t)\chi_{[0,t)}(x_n)$. Notice that if
$N\omega(x',x_n)>\lambda^{\frac1{(p\vee q)'}}$, then
$\wz\omega(y',t)=\omega((y',y_n),t)>\lambda^{\frac1{(p\vee q)'}}$
for some $|(y',y_n)-(x',x_n)|<t$ and $y_n\in[0,t)$. Then
$N\wz\omega(y')>\lambda^{\frac1{(p\vee q)'}}$ and thus, $y'\in
B^*_m$ for some $m$. Since for all $z'\in B(y',t)$,
$N\wz\omega(z')\ge\wz\omega(y',t)>\lambda^{\frac1{(p\vee q)'}}$, we
see that $B(y',t)\subset \wz{E}_\lambda\subset (\cup_m B^*_m)$, and
hence, $t\le 10000 r_{B^*_m}$. Notice that $x_n\in[0,t)$. We have
$(x',x_n)\in (20000 B^*_m)\times [0,20000r_{B^*_m})$ and
$E_\lambda\subset \cup_{m} (20000 B^*_m)\times [0,20000r_{B^*_m})$,
which further implies that $H^{n\tau(p\vee q)'}(E_\lambda)\ls \sum_m
r_{B^*_m}^{n\tau(p\vee q)'}\ls \wz{H}^{n\tau(p\vee
q)'}(\wz{E}_\lambda)$ and
\begin{eqnarray*}
\int_\rn [N\omega(x',x_n)]^{(p\vee q)'}\,dH^{n\tau(p\vee q)'}(x)
&&=\int_0^\fz H^{n\tau(p\vee q)'}(E_\lambda)\,d\lambda\ls \int_0^\fz
\wz{H}^{n\tau(p\vee q)'}(\wz{E}_\lambda)\,d\lambda\\
&&\ls \int_{\rr^{n-1}}[N\wz{\omega}(x')]^{(p\vee
q)'}\,d\wz{H}^{n\tau(p\vee q)'}(x')\ls1.
\end{eqnarray*}
Therefore, we have
\begin{eqnarray*}
&&\l\|\{[\ell(Q')]^\frac12t_{Q'}\}_{Q'\in{\mathcal D}
(\R^{n-1})}\r\|_{\dsbh}\\
&&\hs\ls\left\{ \sum_{j \in \Z} \left[ \sum_{Q' \in
\cD_j(\R^{n-1})}[\ell(Q')]^{-(\frac{s}{n}+\frac12)pn+\frac
p2}|t_{Q'}|^p \int_{Q'\times[0,\ell(Q'))}[\omega(x,2^{-j})]^{-p}\,dx
\right]^{\frac qp}\right\}^\frac1q\\
&&\hs\ls\left\{ \sum_{j \in \Z} \left[ \sum_{Q' \in
\cD_j(\R^{n-1})}|Q'|^{-(\frac{s-1/p}{n-1}+\frac12)p}|t_{Q'}|^p
\int_{Q'}[\wz\omega(x',2^{-j})]^{-p}\,dx' \right]^{\frac
qp}\right\}^\frac1q\\
&&\hs\ls
\|t\|_{b\dot{H}_{p,q}^{s-\frac{1}{p},\frac{n}{n-1}\tau}(\R^{n-1})}\ls\|
f \|_{B\dot{H}_{p,q}^{s-\frac1p,\frac{n}{n-1}\tau}(\R^{n-1})},
\end{eqnarray*}
which implies that $F\in\dbh$ and $\|F\|_{\dbh}\ls \| f
\|_{B\dot{H}_{p,q}^{s-\frac1p,\frac{n}{n-1}\tau}(\R^{n-1})}$.
Furthermore, the definition of $F$ implies ${\rm Tr}(F)=f$, which
completes the proof of Theorem \ref{t4.2}.
\end{proof}

We point out that Theorem \ref{t4.2} generalizes the corresponding
classical results on Besov and Triebel-Lizorkin spaces for $p\in
(1,\fz)$ and $q\in[1,\fz)$ by taking $\tau=0$; see, for example,
Jawerth \cite[Theorem~5.1]{j77}, \cite[Theorem~2.1]{j78} and
Frazier-Jawerth \cite[Theorem~11.1]{fj}.

\medskip

\noindent{\bf Acknowledgments}

\medskip

The authors would like to thank the referees to point out us
some references.

\bigskip

\noindent Wen Yuan:

\smallskip

\noindent School of Mathematical Sciences, Beijing Normal
University, Laboratory of Mathematics and Complex Systems, Ministry
of Education, Beijing 100875, People's Republic of China

\smallskip

\noindent{\it E-mail:} \texttt{wyuan@mail.bnu.edu.cn}

\bigskip

\noindent Yosihiro Sawano:

\smallskip

\noindent Department of Mathematics, Kyoto University,
Kitashirakawa Oiwakecho, Kyoto 606-8502, Japan

\smallskip

\noindent{\it E-mail}: \texttt{yosihiro@math.kyoto-u.ac.jp}

\bigskip

\noindent Dachun Yang (Corresponding author):

\smallskip

\noindent School of Mathematical Sciences, Beijing Normal
University, Laboratory of Mathematics and Complex Systems, Ministry
of Education, Beijing 100875, People's Republic of China

\smallskip

\noindent{\it E-mail:} \texttt{dcyang@bnu.edu.cn}

\end{document}